\newtheorem{theorem}{Theorem}[section]
\newtheorem{lemma}[theorem]{Lemma}
\theoremstyle{definition}
\newtheorem{example}[theorem]{Example}
\theoremstyle{remark}
\newtheorem{remark}[theorem]{Remark}
\numberwithin{equation}{section}
\begin{document}
\title{Numerical algorithm for the model describing anomalous diffusion in expanding media}
\author{Daxin Nie}\address{School of Mathematics and Statistics, Gansu Key Laboratory of Applied Mathematics and Complex Systems, Lanzhou University, Lanzhou 730000, P.R. China}

\author{Jing Sun}\address{School of Mathematics and Statistics, Gansu Key Laboratory of Applied Mathematics and Complex Systems, Lanzhou University, Lanzhou 730000, P.R. China}

\author{Weihua Deng}\address{School of Mathematics and Statistics, Gansu Key Laboratory of Applied Mathematics and Complex Systems, Lanzhou University, Lanzhou 730000, P.R. China}\email{dengwh@lzu.edu.cn}

\date{}
\begin{abstract}
	We provide a numerical algorithm for the model characterizing anomalous diffusion in expanding media, which is derived in [F. Le Vot, E. Abad, and S. B. Yuste,  Phys. Rev. E {\bf96} (2017) 032117]. The Sobolev regularity for the equation is first established. Then we use the finite element method to discretize the Laplace operator and present error estimate of the spatial semi-discrete scheme based on the regularity of the solution; the backward Euler convolution quadrature is developed to approximate Riemann-Liouville fractional derivative and error estimates for the fully discrete scheme are established by using the continuity of solution. Finally, the numerical experiments verify the effectiveness of the algorithm.
 \end{abstract}
\subjclass{65M60, 42A85, 35R11}
\keywords{fractional diffusion equation, variable coefficient, finite element method, convolution quadrature, error analysis}
\maketitle
\section{Introduction}
Currently, it is widely recognized that anomalous diffusions are ubiquitous in the natural world, and some important models are built, including the continuous time random walk (CTRW) model, e.g., \cite{Barkai2000,Barkai2001,Xu2018}, and the Langevin picture, e.g., \cite{Chen20191}. Most of the CTRW models mimic anomalous diffusion processes in static media, while expanding media are typical in biology and cosmology.  Recently, \cite{LeVot2017} builds the CTRW model for anomalous diffusion in expanding media, the Langevin picture of which is given in \cite{Chen20192}, and derives the corresponding Fokker-Planck equation
%
%
\begin{equation}\label{eqretosol}
\left\{
\begin{aligned}
&\frac{\partial W(x,t)}{\partial t}=\frac{1}{a^2(t)}\Delta\left[\,_0D^{1-\alpha}_tW(x,t)\right]+f(x,t), \qquad(x,t)\in \Omega\times(0,T],\\
&W(x,0)=W_0(x),\qquad\qquad\qquad\qquad\qquad\qquad\qquad\quad  x\in \Omega,\\
&W(x,t)=0,\qquad\qquad\quad\qquad\qquad\qquad\qquad\qquad\quad\quad (x,t)\in \partial \Omega\times(0,T],
\end{aligned}
\right.
\end{equation}
where $\Delta$ stands for Laplace operator; $f(x,t)$ is the source term; $\Omega\subset \mathbb{R}$ is a bounded domain; $T$ is a fixed final time; $_0D^{1-\alpha}_t$ denotes the Riemann-Liouville fractional derivative, defined as \cite{Podlubny1999}
\begin{equation*}
\!_0D^{1-\alpha}_t W(t)=\frac{\partial}{\partial t}\,_0I_t^{\alpha}W(t)=\frac{1}{\Gamma(\alpha)}\frac{\partial}{\partial t}\int_0^t\frac{W(\xi)}{(t-\xi)^{1-\alpha}}d\xi, \qquad 0<\alpha<1,
\end{equation*}
and $\,_0I^\alpha_t$ denotes the Riemann-Liouville fractional integral; $a^2(t)$ is the variable diffusion coefficient satisfying
\begin{equation}\label{eqassuma}
	\left|\frac{1}{a^2(t)}-\frac{1}{a^2(s)}\right|\leq C|t-s|,\quad t,s\in[0,T]
\end{equation}
and
\begin{equation}\label{eqassumb}
c<a^2(t)<C,\quad t\in[0,T]
\end{equation}
with $c$ and $C$ being two positive constants.

 So far, numerical methods for fractional differential equations have gained widespread concerns\cite{Bazhlekova2015,Chen2009,Deng2009,Deng2013,Jin2014,Jin2015,Jin2016,Jin2017,Li2009,Lin2007,Zeng2018}, and \cite{Jin2019,Mustapha2018} also provide a complete numerical analysis for fractional differential equations with variable coefficient. Compared with them, the non-commutativity of the Riemann-Liouville fractional derivative and the variable coefficient, i.e., $\frac{1}{a^2(t)}~_0D^{1-\alpha}_t\neq ~_0D^{1-\alpha}_t\frac{1}{a^2(t)}$, brings new challenges in the priori estimate and numerical analysis. To obtain the priori estimate of the solution $W(x,t)$ of Eq. \eqref{eqretosol}, the regularity of $\,_0D^{1-\alpha}_tW(x,t)$ is needed. As for the spatial discretization, we use finite element method to discretize Laplace operator $\Delta$ and get the optimal-order convergence rates. And then we use backward Euler convolution quadrature \cite{Lubich1988,Lubich19882} to discretize Riemann-Liouville fractional derivative and derive error estimates for fully discrete scheme by using H\"{o}lder continuity.

 The rest of the paper is organized as follows. We first provide some preliminaries and then give some priori estimates for the solution of Eq. \eqref{eqretosol} in Sec. 2. In Sec. 3, we use the finite element method to discretize the Laplace operator and get the error estimate of the spatial semi-discrete scheme. Section 4 approximates the Riemann-Liouville fractional derivative by backward Euler convolution quadrature and gives the error estimates of the fully discrete scheme for the homogeneous and inhomogeneous problems. In the last section, we verify the effectiveness of the algorithm by numerical experiments.
\noindent{\section{Preliminary}}
We first give some preliminaries. For $\kappa>0$ and $\pi/2<\theta<\pi$, we define sectors $\Sigma_{\theta}$ and $\Sigma_{\theta,\kappa}$ in the complex plane $\mathbb{C}$ as
\begin{equation*}
	\begin{aligned}
		&\Sigma_{\theta}=\{z\in\mathbb{C}\setminus \{0\},|\arg z|\leq \theta\}, \quad&\Sigma_{\theta,\kappa}=\{z\in\mathbb{C}:|z|\geq\kappa,|\arg z|\leq \theta\},\\
	\end{aligned}
\end{equation*}
and the contour $\Gamma_{\theta,\kappa}$ is defined by
\begin{equation*}
	\Gamma_{\theta,\kappa}=\{z\in\mathbb{C}: |z|=\kappa,|\arg z|\leq \theta\}\cup\{z\in\mathbb{C}: z=r e^{\pm \mathbf{i}\theta}: r\geq \kappa\},
\end{equation*}
oriented with an increasing imaginary part, where $\mathbf{i}$ denotes the imaginary unit and $\mathbf{i}^2=-1$. We use  $\|\cdot\|$ to denote the operator norm from $L^2(\Omega)$ to $L^2(\Omega)$ and $\epsilon$ any small number larger than $0$.

Then we introduce $G(x,t)=\!_0D^{1-\alpha}_tW(x,t)$, $A=-\Delta$, and $A(t)=-\frac{1}{a^2(t)}\Delta$. For any $ r\geq  0 $, denote the space $ \dot{H}^r(\Omega)=\{v\in L^2(\Omega): A^{\frac{r}{2}}v\in L^2(\Omega) \}$ with the norm \cite{Bazhlekova2015}
	\begin{equation*}
		\|v\|^2_{\dot{H}^r(\Omega)}=\sum_{j=1}^{\infty}\lambda_j^r(v,\varphi_j)^2,
	\end{equation*}
	where $ {(\lambda_j,\varphi_j)} $ are the eigenvalues ordered non-decreasingly and the corresponding eigenfunctions normalized in the $ L^2(\Omega) $ norm of operator $A$ subject to the homogeneous Dirichlet boundary conditions on $\Omega$. Thus $ \dot{H}^0(\Omega)=L^2(\Omega) $, $\dot{H}^1(\Omega)=H^1_0(\Omega)$, and $\dot{H}^2(\Omega)=H^2(\Omega)\bigcap H^1_0(\Omega)$.
 For simplicity, we denote $G(t)$, $W(t)$, $W_0$, and $f(t)$ as $G(x,t)$, $W(x,t)$, $W_0(x)$, and $f(x,t)$ in the following. Throughout this paper, $C$ denotes a generic positive constant, whose value may differ at each occurrence.

 According to \eqref{eqassuma}, there exists
\begin{equation}\label{eqassumae}
	\|(A(t)-A(s))u\|_{L^2(\Omega)}\leq C|t-s|\|u\|_{\dot{H}^2(\Omega)}.
\end{equation}
Thus by simple calculations, for any fixed $t_0\in (0,T]$, the solution of Eq. \eqref{eqretosol} can be represented as
\begin{equation}\label{eqrepsW}
	\begin{aligned}
		W(t)=F(t,t_0)W_0+\int_0^tF(t-s,t_0)f(s)ds+\int_0^tF(t-s,t_0)\left (A(t_0)-A(s)\right )G(s)ds,
	\end{aligned}
\end{equation}
where
\begin{equation}\label{eqdefF}
	F(t,t_0):=\frac{1}{2\pi \mathbf{i}}\int_{\Gamma_{\theta,\kappa}}e^{zt}z^{\alpha-1}(z^\alpha+A(t_0))^{-1}dz.
\end{equation}
By means of the Laplace transform and the definition of $G(t)$, we get
\begin{equation}\label{eqrepsG}
	G(t)=E(t,t_0)W_0+\int_0^tE(t-s,t_0)f(s)ds+\int_0^tE(t-s,t_0)(A(t_0)-A(s))G(s)ds,
\end{equation}
where
\begin{equation}\label{eqdefE}
	E(t,t_0):=\frac{1}{2\pi \mathbf{i}}\int_{\Gamma_{\theta,\kappa}}e^{zt}(z^\alpha+A(t_0))^{-1}dz.
\end{equation}
As for the operators   $F(t,t_0)$ and $E(t,t_0)$, there exist the following estimates.
\begin{lemma}[\cite{Jin2019}]\label{lemestEF}
The operators $F(t,t_0)$ and  $E(t,t_0)$  defined in \eqref{eqdefF} and  \eqref{eqdefE}  satisfy
\begin{equation*}
	\begin{aligned}
		&\|E(t,t_0)\|\leq Ct^{\alpha-1},\quad \|F(t,t_0)\|\leq C,\quad\|A^{1-\beta}E(t,t_0)\|\leq Ct^{\alpha\beta-1},\\
		&\|A^{\beta}F(t,t_0)\|\leq Ct^{-\alpha\beta},\quad	\|A^{-\beta}F'(t,t_0)\|\leq Ct^{\alpha\beta-1},
	\end{aligned}
\end{equation*}
where $F'(t,t_0)$ denotes the first derivative about $t$ and $\beta\in[0,1]$.
\end{lemma}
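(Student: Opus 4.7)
The whole argument rests on a single resolvent estimate. Since $A(t_0)=\frac{1}{a^2(t_0)}A$ is self-adjoint and positive on $L^2(\Omega)$ with the same eigenfunctions as $A$ and eigenvalues $\lambda_j/a^2(t_0)$, and since \eqref{eqassumb} confines $a^2(t_0)$ to a compact positive interval, the spectral theorem together with the elementary estimate $|z^\alpha+\mu|\geq c(|z|^\alpha+\mu)$, valid for $z\in\Sigma_\theta$ with $\alpha\theta<\pi$ and $\mu>0$, yields
\begin{equation*}
\|A^\gamma(z^\alpha+A(t_0))^{-1}\|\leq C|z|^{-\alpha(1-\gamma)},\qquad \gamma\in[0,1],\ z\in\Sigma_\theta.
\end{equation*}
This is the only resolvent-level input I expect to need.

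With it in hand, the plan is to take $\kappa=1/t$ in the contour $\Gamma_{\theta,\kappa}$ and to split $\Gamma_{\theta,1/t}$ into the circular arc $\{|z|=1/t\}$ and the two rays $\{z=re^{\pm\mathbf{i}\theta}:r\geq 1/t\}$. On the rays $\Re(z)=r\cos\theta<0$, so $|e^{zt}|=e^{r\cos\theta\, t}$ decays exponentially, while on the arc $|e^{zt}|\leq e$; in both cases the scalar integral collapses cleanly under the substitution $r=s/t$. For the first four bounds I would simply plug the resolvent estimate with the appropriate $\gamma$ into \eqref{eqdefF} or \eqref{eqdefE}, pull the operator norm inside the integral, and carry out this scalar computation. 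As a representative instance, for $\|A^\beta F(t,t_0)\|$ the integrand is bounded by $|e^{zt}||z|^{\alpha\beta-1}$, and the substitution produces the claimed $t^{-\alpha\beta}$; the other three cases are entirely analogous.

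The one genuinely delicate estimate is the last, $\|A^{-\beta}F'(t,t_0)\|\leq Ct^{\alpha\beta-1}$, which is where I expect the main obstacle to lie. Differentiating under the integral sign produces an extra factor $z$, and since $A^{-\beta}$ contributes no $|z|$-decay of its own---it is only bounded on $L^2$ because $\lambda_1>0$---a direct bound on $z^\alpha A^{-\beta}(z^\alpha+A(t_0))^{-1}$ is not sharp enough: it would yield only $Ct^{-1}$, which is strictly weaker than $Ct^{\alpha\beta-1}$ whenever $\alpha\beta>0$ and $t$ is small. I would circumvent this with the algebraic identity
\begin{equation*}
z^\alpha(z^\alpha+A(t_0))^{-1}=I-A(t_0)(z^\alpha+A(t_0))^{-1},
\end{equation*}
noting that $\frac{1}{2\pi\mathbf{i}}\int_{\Gamma_{\theta,\kappa}}e^{zt}dz=0$ for $t>0$ (close the contour at infinity in the left half-plane), so that the identity piece drops out and
\begin{equation*}
F'(t,t_0)=-\frac{1}{2\pi\mathbf{i}}\int_{\Gamma_{\theta,\kappa}}e^{zt}A(t_0)(z^\alpha+A(t_0))^{-1}dz.
\end{equation*}
Since $A^{-\beta}A(t_0)=a^{-2}(t_0)A^{1-\beta}$, the resolvent estimate with $\gamma=1-\beta$ reduces the integrand to $|e^{zt}||z|^{-\alpha\beta}$, and the same $\kappa=1/t$ substitution then delivers the desired $Ct^{\alpha\beta-1}$.
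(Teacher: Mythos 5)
Your proposal is correct and follows essentially the route the paper itself indicates: the lemma is quoted from \cite{Jin2019}, and the accompanying remark states that the estimates follow from the resolvent bound $\|(z+A)^{-1}\|\leq C|z|^{-1}$ on $\Sigma_\theta$ (equivalently your interpolated form $\|A^\gamma(z^\alpha+A(t_0))^{-1}\|\leq C|z|^{-\alpha(1-\gamma)}$) combined with contour integration, and that the last estimate uses precisely the identity $z^\alpha(z^\alpha+A)^{-1}=\mathbf{I}-A(z^\alpha+A)^{-1}$ that you invoke. Your observation that a direct bound on $A^{-\beta}F'$ would only give $Ct^{-1}$, and that the identity plus $\int_{\Gamma_{\theta,\kappa}}e^{zt}\,dz=0$ repairs this, is exactly the point of that remark.
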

\begin{remark}
The estimates in Lemma \ref{lemestEF} are got by mainly using $\|(z+A)^{-1}\|\leq C|z|^{-1}$ for $z\in \Sigma_\theta$. And the last estimate can be obtained by using the fact $z^\alpha(z^{\alpha}+A)^{-1}=\mathbf{I}-A(z^{\alpha}+A)^{-1}$, where $\mathbf{I}$ denotes the identity operator.
\end{remark}
To get the priori estimate of $W(t)$, we first provide some estimates of $G(t)$.
\begin{theorem}\label{thmregofG}
	If $W_0\in \dot{H}^{\epsilon}(\Omega)$, $f(0)\in L^2(\Omega)$ and $\int_{0}^{t}\|f'(s)\|_{L^2(\Omega)}ds<C$ with $t\in [0,T]$, then $G(t)$ satisfies
	\begin{equation*}
			\|G(t)\|_{L^2(\Omega)}\leq C(T)t^{\alpha-1}\|W_0\|_{L^2(\Omega)}+C(T)\|f(0)\|_{L^2(\Omega)}+C(T)\int_{0}^{t}\|f'(s)\|_{L^2(\Omega)}ds
	\end{equation*}
	and
	\begin{equation*}
		\|G(t)\|_{\dot{H}^2(\Omega)}\leq C(T)t^{\epsilon\alpha/2-1}\|W_0\|_{\dot{H}^{\epsilon}(\Omega)}+C(T)\|f(0)\|_{L^2(\Omega)}+C(T)\int_{0}^{t}\|f'(s)\|_{L^2(\Omega)}ds.
	\end{equation*}
\end{theorem}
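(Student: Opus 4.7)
My plan is to work from the Volterra-type representation \eqref{eqrepsG} with the free parameter $t_0$ chosen equal to the current time $t$. With this choice the perturbation factor becomes $(A(t)-A(s))G(s) = \bigl(\frac{1}{a^2(t)}-\frac{1}{a^2(s)}\bigr)AG(s)$, which carries a clean $|t-s|$ bound from \eqref{eqassuma}. Throughout I will use that $A$ commutes with $E(\cdot,t_0)$ and $F(\cdot,t_0)$ (because $A(t_0)$ is a positive scalar multiple of $A$), together with the identity $AE(t,t_0) = -a^2(t_0)F'(t,t_0)$, obtained by differentiating \eqref{eqdefF} in $t$ and using $z^\alpha(z^\alpha+A(t_0))^{-1} = \mathbf{I} - A(t_0)(z^\alpha+A(t_0))^{-1}$.

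For the $L^2$ bound I would estimate the three terms in \eqref{eqrepsG} separately. The first term is handled directly by $\|E(t,t)\|\leq Ct^{\alpha-1}$ from Lemma \ref{lemestEF}. For the source term I would write $f(s) = f(0) + \int_0^s f'(\tau)d\tau$ and exchange the order of integration, so that only $\|\int_\tau^t E(t-s,t)ds\|\leq C(t-\tau)^{\alpha}\leq CT^\alpha$ appears and the $\|f(0)\|_{L^2(\Omega)}$ and $\int_0^t\|f'\|_{L^2(\Omega)}ds$ contributions drop out. For the perturbation term, writing the integrand as $\bigl(\frac{1}{a^2(t)}-\frac{1}{a^2(s)}\bigr)AE(t-s,t)G(s)$ shows that the bound $\bigl|\frac{1}{a^2(t)}-\frac{1}{a^2(s)}\bigr|\leq C|t-s|$ exactly cancels the singularity in $\|AE(t-s,t)\|\leq C(t-s)^{-1}$, leaving only $C\|G(s)\|_{L^2(\Omega)}$. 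A Gronwall inequality accommodating the integrable singular datum $Ct^{\alpha-1}\|W_0\|_{L^2(\Omega)}$ then delivers the first estimate.

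For the $\dot{H}^2$ bound I would apply $A$ to \eqref{eqrepsG}. The $W_0$ term is controlled by Lemma \ref{lemestEF} with $\beta=\epsilon/2$, producing $Ct^{\alpha\epsilon/2-1}\|W_0\|_{\dot{H}^\epsilon(\Omega)}$. The perturbation term again uses the $|t-s|$ factor to kill the $(t-s)^{-1}$ singularity in $\|AE(t-s,t)\|$, this time absorbing the remaining $A$ onto $G$ to produce the linear Gronwall kernel $C\|G(s)\|_{\dot{H}^2(\Omega)}$. The main obstacle is the source term, where $\|AE(t-s,t)\|\leq C(t-s)^{-1}$ is no longer integrable. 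The key step is an integration by parts in $s$ via $AE(t-s,t) = a^2(t)\partial_s F(t-s,t)$, which rewrites the integral as $a^2(t)\bigl[F(0,t)f(t) - F(t,t)f(0) - \int_0^t F(t-s,t)f'(s)ds\bigr]$; using $F(0,t) = \mathbf{I}$ (from the initial value theorem applied to the Laplace transform $z^{\alpha-1}(z^\alpha+A(t_0))^{-1}$), the uniform bound $\|F\|\leq C$, and $\|f(t)\|_{L^2(\Omega)}\leq \|f(0)\|_{L^2(\Omega)}+\int_0^t\|f'\|_{L^2(\Omega)}ds$, the source term is controlled by the desired right-hand side. Assembling the pieces and invoking the generalized Gronwall inequality finishes the second estimate.
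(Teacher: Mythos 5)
Your proposal is correct and follows essentially the same route as the paper: bound the three terms of the representation \eqref{eqrepsG} via Lemma \ref{lemestEF} and \eqref{eqassumae}, use the factor $|t_0-s|$ to cancel the $(t_0-s)^{-1}$ singularity of $\|A E(t_0-s,t_0)\|$ in the perturbation term, and close with a weakly singular Gr\"onwall argument after setting $t_0=t$. Your integration by parts $AE(t-s,t)=a^2(t)\partial_s F(t-s,t)$ for the source term is just an explicit version of the ``convolution properties'' the paper invokes (equivalently $f=f(0)+1\ast f'$ plus $\int_0^r AE(\cdot,t_0)=a^2(t_0)(\mathbf{I}-F(r,t_0))$), so the two arguments coincide in substance.
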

\begin{proof}
	Applying $A(t_0)$ on both sides of \eqref{eqrepsG} and taking $L^2$ norm on both sides yield
	\begin{equation*}
		\begin{aligned}
			\|A(t_0)G(t_0)\|_{L^2(\Omega)}\leq&\|A(t_0)E(t_0,t_0)W_0\|_{L^2(\Omega)}+\left\|\int_0^{t_0}A(t_0)E(t_0-s,t_0)f(s)ds\right\|_{L^2(\Omega)}\\
			&+\left\|\int_0^{t_0}A(t_0)E(t_0-s,t_0)(A(t_0)-A(s))G(s)ds\right\|_{L^2(\Omega)}.
		\end{aligned}
	\end{equation*}
According to Lemma \ref{lemestEF}, \eqref{eqassumae}, and convolution properties, there exists 
	\begin{equation*}
		\begin{aligned}
			\|A(t_0)G(t_0)\|_{L^2(\Omega)}\leq& Ct_0^{\epsilon\alpha/2-1}\|W_0\|_{\dot{H}^{\epsilon}(\Omega)}+C\|f(0)\|_{L^2(\Omega)}+C\int_{0}^{t_0}\|f'(s)\|_{L^2(\Omega)}ds+C\int_0^{t_0} \|G(s)\|_{\dot{H}^2(\Omega)}ds.
		\end{aligned}
	\end{equation*}
	Taking $t_0=t$ and using Gr\"{o}nwall's inequality \cite{Jin2019,Larsson1992} lead to
	\begin{equation*}
		\|G(t)\|_{\dot{H}^2(\Omega)}\leq C(T)t^{\epsilon\alpha/2-1}\|W_0\|_{\dot{H}^{\epsilon}(\Omega)}+C(T)\|f(0)\|_{L^2(\Omega)}+C(T)\int_{0}^{t}\|f'(s)\|_{L^2(\Omega)}ds.
	\end{equation*}
	Similarly we have
	\begin{equation*}
		\|G(t)\|_{L^2(\Omega)}\leq C(T)t^{\alpha-1}\|W_0\|_{L^2(\Omega)}+C(T)\|f(0)\|_{L^2(\Omega)}+C(T)\int_{0}^{t}\|f'(s)\|_{L^2(\Omega)}ds.
	\end{equation*}
\end{proof}
Next we give the regularity estimate of $W(t)$.

\begin{theorem}\label{thmregofW}
	If $W_0\in \dot{H}^{\epsilon}(\Omega)$, $f(0)\in L^2(\Omega)$ and $\int_{0}^{t}\|f'(s)\|_{L^2(\Omega)}ds<C$ with $t\in [0,T]$, then the solution $W(t)$ of \eqref{eqretosol} satisfies
	\begin{equation*}
		\begin{aligned}
			\|W(t)\|_{\dot{H}^2(\Omega)}
			\leq
			& C(T)t^{-\alpha}\|W_0\|_{\dot{H}^\epsilon(\Omega)}+C(T)\|f(0)\|_{L^2(\Omega)}+C(T)\int_{0}^{t}\|f'(s)\|_{L^2(\Omega)}ds.
		\end{aligned}
	\end{equation*}
\end{theorem}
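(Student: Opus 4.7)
The plan is to mirror the strategy used for Theorem \ref{thmregofG}: apply $A$ to both sides of the representation \eqref{eqrepsW} of $W(t)$, take $L^2(\Omega)$ norms, estimate the three resulting terms separately by means of Lemma \ref{lemestEF}, the Lipschitz bound \eqref{eqassumae}, and the estimates on $G$ already obtained in Theorem \ref{thmregofG}, and finally set $t_0=t$. Since $A(t)=A/a^2(t)$ and $a^2(t)\in[c,C]$ by \eqref{eqassumb}, the norms $\|A\cdot\|_{L^2(\Omega)}$ and $\|A(t)\cdot\|_{L^2(\Omega)}$ are equivalent up to a constant, so controlling $\|AW(t)\|_{L^2(\Omega)}$ is the same as controlling $\|W(t)\|_{\dot H^2(\Omega)}$.

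For the initial-data term $AF(t,t_0)W_0$, I would split the operator as $A^{1-\epsilon/2}F(t,t_0)\cdot A^{\epsilon/2}$; Lemma \ref{lemestEF} with $\beta=1-\epsilon/2$ yields $\|A^{1-\epsilon/2}F(t,t_0)\|\leq Ct^{-\alpha(1-\epsilon/2)}$, and the harmless factor $t^{\alpha\epsilon/2}\leq T^{\alpha\epsilon/2}$ is absorbed into $C(T)$, producing the $C(T)t^{-\alpha}\|W_0\|_{\dot H^\epsilon(\Omega)}$ piece. For the source term $\int_0^t AF(t-s,t_0)f(s)\,ds$, the bound $\|AF(t-s,t_0)\|\leq C(t-s)^{-\alpha}$ (Lemma \ref{lemestEF} with $\beta=1$) combined with the crude pointwise estimate $\|f(s)\|_{L^2(\Omega)}\leq\|f(0)\|_{L^2(\Omega)}+\int_0^s\|f'(r)\|_{L^2(\Omega)}\,dr$ and $\int_0^t(t-s)^{-\alpha}ds=t^{1-\alpha}/(1-\alpha)\leq C(T)$ immediately gives the desired $C(T)\|f(0)\|_{L^2(\Omega)}+C(T)\int_0^t\|f'(s)\|_{L^2(\Omega)}\,ds$ contribution. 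For the coupling term $\int_0^t AF(t-s,t_0)(A(t_0)-A(s))G(s)\,ds$, combining $\|AF(t-s,t_0)\|\leq C(t-s)^{-\alpha}$ with \eqref{eqassumae} and then setting $t_0=t$ turns the integrand into $C(t-s)^{1-\alpha}\|G(s)\|_{\dot H^2(\Omega)}$; plugging in the bound on $\|G(s)\|_{\dot H^2(\Omega)}$ from Theorem \ref{thmregofG} produces a Beta-type integral $\int_0^t(t-s)^{1-\alpha}s^{\epsilon\alpha/2-1}\,ds = B(2-\alpha,\epsilon\alpha/2)\,t^{1-\alpha+\epsilon\alpha/2}$ that on $[0,T]$ is dominated by $C(T)t^{-\alpha}$, plus bounded factors multiplying $\|f(0)\|_{L^2(\Omega)}$ and $\int_0^t\|f'(s)\|_{L^2(\Omega)}\,ds$.

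The most delicate point I expect is controlling the coupling term without worsening the $t^{-\alpha}$ singularity inherited from $AF$. The crucial gain is the extra factor $|t_0-s|=(t-s)$ provided by \eqref{eqassumae} once $t_0=t$: it transforms the singular kernel $(t-s)^{-\alpha}$ into the integrable $(t-s)^{1-\alpha}$, which is exactly what is needed to compensate the $s^{\epsilon\alpha/2-1}$ singularity of $\|G(s)\|_{\dot H^2(\Omega)}$ near the origin and to keep the resulting Beta integral bounded by $C(T)t^{-\alpha}$ on $[0,T]$. Unlike the proof of Theorem \ref{thmregofG}, no Grönwall iteration is required here, since $\|G(s)\|_{\dot H^2(\Omega)}$ is already known and the bound on $\|AW(t)\|_{L^2(\Omega)}$ closes in a single pass.
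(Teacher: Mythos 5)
Your proposal is correct and follows essentially the same route as the paper: apply $A(t_0)$ to the representation \eqref{eqrepsW}, bound the three terms via Lemma \ref{lemestEF} and \eqref{eqassumae}, use the extra factor $(t_0-s)$ to turn the kernel $(t_0-s)^{-\alpha}$ into $(t_0-s)^{1-\alpha}$, insert the bound on $\|G(s)\|_{\dot H^2(\Omega)}$ from Theorem \ref{thmregofG}, and set $t_0=t$ with no Gr\"onwall step. The only cosmetic differences are that the paper bounds the initial-data term directly by $Ct_0^{-\alpha}\|W_0\|_{L^2(\Omega)}$ (taking $\beta=1$, which suffices since $\|W_0\|_{L^2(\Omega)}\leq\|W_0\|_{\dot H^\epsilon(\Omega)}$) rather than splitting off $A^{\epsilon/2}$, and your crude pointwise bound on $\|f(s)\|_{L^2(\Omega)}$ is legitimate here precisely because $\|AF(t-s,t_0)\|\leq C(t-s)^{-\alpha}$ is integrable.
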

\begin{proof}
	Applying $A(t_0)$ on both sides of \eqref{eqrepsW} and taking $L^2$ norm lead to
	\begin{equation*}
		\begin{aligned}
			\|A(t_0)W(t_0)\|_{L^2(\Omega)}\leq&\left \|A(t_0)F(t_0,t_0)W_0\right \|_{L^2(\Omega)}+\left\|\int_0^{t_0}A(t_0)F(t_0-s,t_0)f(s)ds\right \|_{L^2(\Omega)}\\
			&+\left \|\int_0^{t_0}A(t_0)F(t_0-s,t_0)(A(t_0)-A(s))G(s)ds\right \|_{L^2(\Omega)}.\\
		\end{aligned}
	\end{equation*}
According to (\ref{eqassumae}), Lemma \ref{lemestEF}, and the fact $T/(t_0-s)>1$, there exists
	\begin{equation*}
		\begin{aligned}
		&	\|A(t_0)W(t_0)\|_{L^2(\Omega)} \\ & \leq Ct_0^{-\alpha}\|W_0\|_{L^2(\Omega)}+C\|f(0)\|_{L^2(\Omega)}+C\int_0^{t_0}\|f'(s)\|_{L^2(\Omega)}ds+C\int_0^{t_0}(t_0-s)^{1-\alpha}\|G(s)\|_{\dot{H}^2(\Omega)}ds.
		\end{aligned}
	\end{equation*}
Further combining Theorem \ref{thmregofG} results in
	\begin{equation*}
		\begin{aligned}
			\|W(t_0)\|_{\dot{H}^2(\Omega)}
			\leq& C(T)t_0^{-\alpha}\|W_0\|_{\dot{H}^\epsilon(\Omega)}+C(T)\|f(0)\|_{L^2(\Omega)}+C(T)\int_{0}^{t_0}\|f'(s)\|_{L^2(\Omega)}ds,
		\end{aligned}
	\end{equation*}
which leads to the desired result after taking $t_0=t$,.
\end{proof}

\noindent{\section{Spacial discretization and error analysis}}
In this section, we discretize Laplace operator by the finite element method and provide the error estimates for the space semi-discrete scheme of Eq. \eqref{eqretosol}. Let $\mathcal{T}_h$ be a shape regular quasi-uniform partitions of the domain $\Omega$, where $h$ is the maximum diameter. Denote $ X_h $ as piecewise linear finite element space
\begin{equation*}
X_{h}=\{v_h\in C(\bar{\Omega}): v_h|_\mathbf{T}\in \mathcal{P}^1,\  \forall \mathbf{T}\in\mathcal{T}_h,\ v_h|_{\partial \Omega}=0\},
\end{equation*}
where $\mathcal{P}^1$ denotes the set of piecewise polynomials of degree $1$ over $\mathcal{T}_h$. Then we define the $ L^2 $-orthogonal projection $ P_h:\ L^2(\Omega)\rightarrow X_h $ and the Ritz projection $ R_h:\ H^1_0(\Omega)\rightarrow X_h $ \cite{Bazhlekova2015}, respectively, by 
\begin{equation*}
\begin{aligned}
&(P_hu,v_h)=(u,v_h) \ ~~~\forall v_h\in X_h,\\
&(\nabla R_h u,\nabla v_h)=(\nabla u, \nabla v_h) \ ~~~\forall v_h\in X_h.
\end{aligned}
\end{equation*}
\begin{lemma}[\cite{Bazhlekova2015}]\label{lemprojection}
	The projections $ P_h $ and $ R_h $ satisfy
	\begin{equation*}
	\begin{aligned}
	&\|P_hu-u\|_{L^2(\Omega)}+h\|\nabla(P_hu-u)\|_{L^2(\Omega)}\leq Ch^q\|u\|_{\dot{H}^{q}(\Omega)}\ {\rm for}\ u\in \dot{H}^q(\Omega),\ q=1,2,\\
	&\|R_hu-u\|_{L^2(\Omega)}+h\|\nabla(R_hu-u)\|_{L^2(\Omega)}\leq Ch^q\|u\|_{\dot{H}^{q}(\Omega)}\ {\rm for}\ u\in \dot{H}^q(\Omega),\ q=1,2.
	\end{aligned}
	\end{equation*}
\end{lemma}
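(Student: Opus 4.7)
The plan is to reduce both bounds to standard interpolation estimates plus one or two classical tricks (Céa, Aubin--Nitsche, inverse inequality). Since $\mathcal{T}_h$ is shape-regular and quasi-uniform, I can invoke a Scott--Zhang (or Lagrange, for $q=2$ in 1D) interpolant $I_h : \dot H^q(\Omega) \to X_h$ satisfying
\begin{equation*}
\|I_h u - u\|_{L^2(\Omega)} + h\|\nabla(I_h u - u)\|_{L^2(\Omega)} \leq Ch^q \|u\|_{\dot H^q(\Omega)}, \qquad q=1,2,
\end{equation*}
as the common starting point. I would treat $R_h$ first because the estimates are cleanest there.

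For the Ritz projection, Galerkin orthogonality $(\nabla(R_h u - u),\nabla v_h)=0$ for all $v_h\in X_h$ gives, after inserting $R_h u - I_h u\in X_h$ and applying Cauchy--Schwarz, $\|\nabla(R_h u - u)\|_{L^2}\leq \|\nabla(I_h u - u)\|_{L^2}\leq Ch^{q-1}\|u\|_{\dot H^q(\Omega)}$. The $L^2$ bound then follows by the Aubin--Nitsche duality argument: let $\psi\in \dot H^2(\Omega)$ solve $-\Delta\psi = R_h u - u$, write $\|R_h u-u\|_{L^2}^2 = (\nabla(R_h u -u),\nabla\psi)$, subtract $I_h\psi$ inside using Galerkin orthogonality, and use the $H^1$ estimate just obtained together with elliptic regularity $\|\psi\|_{\dot H^2(\Omega)}\leq C\|R_h u-u\|_{L^2}$.

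For the $L^2$-projection $P_h$, the $L^2$ bound is immediate from best approximation: $\|P_h u - u\|_{L^2}\leq \|I_h u - u\|_{L^2}\leq Ch^q\|u\|_{\dot H^q(\Omega)}$. The gradient bound is the one place where the quasi-uniformity of $\mathcal{T}_h$ really enters. I would write $\nabla(P_h u - u) = \nabla(P_h u - I_h u) - \nabla(u - I_h u)$ and apply the inverse inequality $\|\nabla w_h\|_{L^2}\leq Ch^{-1}\|w_h\|_{L^2}$ (valid on quasi-uniform meshes) to the discrete function $w_h=P_h u - I_h u$, then bound $\|P_h u - I_h u\|_{L^2}\leq \|P_h u - u\|_{L^2} + \|u - I_h u\|_{L^2}\leq Ch^q\|u\|_{\dot H^q(\Omega)}$ by the already-proved $L^2$ bound and interpolation. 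Combined with the interpolation $H^1$ estimate, this yields $h\|\nabla(P_h u - u)\|_{L^2}\leq Ch^q\|u\|_{\dot H^q(\Omega)}$.

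The main obstacle is really cosmetic rather than mathematical: tracking the correct powers of $h$ for $q=1$ versus $q=2$ and ensuring that the inverse-inequality step does not lose a power. Everything else is textbook, and since the lemma is stated only for the quasi-uniform, shape-regular setting, no technical refinements (such as $L^\infty$-stability of $P_h$ on graded meshes) are needed.
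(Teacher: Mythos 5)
Your argument is correct and complete: the paper itself offers no proof of this lemma, simply citing \cite{Bazhlekova2015}, and the standard route taken there (and in Thom\'ee's book) is exactly yours --- interpolation estimates plus C\'ea and Aubin--Nitsche for $R_h$, and best approximation plus the inverse inequality on quasi-uniform meshes for the $H^1$-bound on $P_h$. The only hypotheses you implicitly use, $H^2$ elliptic regularity and the equivalence of $\|\cdot\|_{\dot H^q}$ with the usual Sobolev norms for $q=1,2$, hold here since $\Omega\subset\mathbb{R}$ is an interval, so nothing is missing.
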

Denote $(\cdot,\cdot)$ as the $L_2$ inner product and $A_h$ defined from $(A_hu,v)=(\nabla u, \nabla v)$. The semi-discrete Galerkin scheme for \eqref{eqretosol} reads: For every $t\in (0,T]$ find $ W_{h}\in X_h$ such that 
\begin{equation}\label{eqsemischeme}
	\left \{
	\begin{aligned}
		&\left(\frac{\partial W_h}{\partial t},v\right)+(\,_0D^{1-\alpha}_tA_h(t_0)W_h,v)=(f,v)+((A_h(t_0)-A_h(t))\,_0D^{1-\alpha}_tW_h,v)\quad {\rm~for~all~} v\in X_h,
		\\
		&W_h(0)=W_{0,h}, 
		\\
	\end{aligned}
	\right.
\end{equation}
where 
\begin{equation*}
	W_{0,h}=\left \{
	\begin{aligned}
		&P_hW_0,\qquad W_0\in L^2(\Omega),\\
		&R_hW_0,\qquad W_0\in \dot{H}^2(\Omega),
	\end{aligned}\right .
\end{equation*}
 and
\begin{equation*}
	 (A_h(t)u,v)=\frac{1}{a^2(t)}(\nabla u, \nabla v).
\end{equation*}
For convenience, we rewrite the spatial semi-discrete scheme as
\begin{equation*}
	\frac{\partial W_h}{\partial t}+\,_0D^{1-\alpha}_tA_h(t_0)W_h=f_h+(A_h(t_0)-A_h(t))\,_0D^{1-\alpha}_tW_h,
\end{equation*}
where $f_h=P_hf$. By means of Laplace transform, the solution of \eqref{eqsemischeme} can be rewritten as
\begin{equation}\label{eqrepsWh}
	\begin{aligned}
		W_h(t)=&F_h(t,t_0)W_{0,h}+\int_0^tF_h(t-s,t_0)f_h(s)ds+\int_0^tF_h(t-s,t_0)(A_h(t_0)-A_h(s))\,_0D^{1-\alpha}_sW_h(s)ds,
	\end{aligned}
\end{equation}
where
\begin{equation}\label{equdefFh}
	F_h(t,t_0):=\frac{1}{2\pi \mathbf{i}}\int_{\Gamma_{\theta,\kappa}}e^{zt}z^{\alpha-1}(z^\alpha+A_h(t_0))^{-1}dz.
\end{equation}
Introducing $G_h(t)=\,_0D^{1-\alpha}_tW_h(t)$, thus $G_h(t)$ can be represented by
\begin{equation}\label{eqrepsGh}
	\begin{aligned}
		G_h(t)=&E_h(t,t_0)W_{0,h}+\int_0^tE_h(t-s,t_0)f_h(s)ds+\int_0^tE_h(t-s,t_0)(A_h(t_0)-A_h(s))G_h(s)ds,
	\end{aligned}
\end{equation}
where
\begin{equation}\label{equdefEh}
	E_h(t,t_0):=\frac{1}{2\pi \mathbf{i}}\int_{\Gamma_{\theta,\kappa}}e^{zt}(z^\alpha+A_h(t_0))^{-1}dz.
\end{equation}

Similar to Lemma \ref{lemestEF}, the following estimates about $E_h$ and $F_h$ hold.
\begin{lemma}[\label{lemestEFh}\cite{Jin2019}]
	The operators $F_h(t,t_0)$ and $E_h(t,t_0)$ defined in \eqref{equdefFh} and \eqref{equdefEh} satisfy
	\begin{equation*}
	\begin{aligned}
	&\|E_h(t,t_0)\|\leq Ct^{\alpha-1},\quad \|F_h(t,t_0)\|\leq C,\quad\|A_h^{1-\beta}E_h(t,t_0)\|\leq Ct^{\alpha\beta-1},\\
	&\|A_h^{\beta}F_h(t,t_0)\|\leq Ct^{-\alpha\beta},\quad	\|A_h^{-\beta}F_h'(t,t_0)\|\leq Ct^{\alpha\beta-1},
	\end{aligned}
	\end{equation*}
where $\beta\in [0,1]$.
\end{lemma}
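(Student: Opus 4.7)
The plan is to mirror the continuous argument behind Lemma \ref{lemestEF} in the finite element setting, since the key structural properties transfer essentially verbatim to $A_h(t_0) = a^{-2}(t_0) A_h$ acting on the finite-dimensional space $X_h$. First I would establish the discrete resolvent estimate $\|(z+A_h(t_0))^{-1}\|\le C|z|^{-1}$ for $z\in\Sigma_\theta$. Because $A_h$ is symmetric positive definite on $X_h$ with smallest eigenvalue bounded below by the Poincar\'e constant of $\Omega$ (uniformly in $h$), and because $a^{-2}(t_0)$ is pinched between two positive constants by \eqref{eqassumb}, $A_h(t_0)$ is self-adjoint with spectrum in a positive interval bounded away from zero. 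Spectral calculus then yields, for every $z\in\Sigma_\theta$ with $\theta\in(\pi/2,\pi)$ and any $\alpha\in(0,1)$,
\begin{equation*}
	\|(z^\alpha + A_h(t_0))^{-1}\| \le C|z|^{-\alpha}, \qquad \|A_h(t_0)^\gamma (z^\alpha + A_h(t_0))^{-1}\| \le C|z|^{-\alpha(1-\gamma)}
\end{equation*}
for $\gamma\in[0,1]$, with constants independent of $h$ and $t_0$. Passing between $A_h$ and $A_h(t_0)$ only adds a bounded multiplicative factor $(a^2(t_0))^{\pm\gamma}$.

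Second, I would substitute these resolvent bounds into the contour integral representations \eqref{equdefFh} and \eqref{equdefEh}, choose the standard Hankel path $\Gamma_{\theta,\kappa}$ with $\kappa=1/t$, and integrate. For $\|E_h(t,t_0)\|$ one bounds the integrand by $|e^{zt}||z|^{-\alpha}$, which gives $Ct^{\alpha-1}$. For $\|F_h(t,t_0)\|$ the factor $|z|^{\alpha-1}$ cancels against $|z|^{-\alpha}$, and the remaining integral is uniformly bounded. For $\|A_h^{1-\beta}E_h(t,t_0)\|$ and $\|A_h^\beta F_h(t,t_0)\|$ I would use the second resolvent bound with $\gamma=1-\beta$ and $\gamma=\beta$ respectively, producing the claimed powers $t^{\alpha\beta-1}$ and $t^{-\alpha\beta}$.

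For the last estimate, I would differentiate \eqref{equdefFh} under the integral sign to obtain $F_h'(t,t_0) = \frac{1}{2\pi\mathbf{i}}\int_{\Gamma_{\theta,\kappa}} z^\alpha e^{zt}(z^\alpha+A_h(t_0))^{-1}\,dz$, then apply the identity $z^\alpha(z^\alpha+A_h(t_0))^{-1} = \mathbf{I} - A_h(t_0)(z^\alpha+A_h(t_0))^{-1}$ recalled in the remark after Lemma \ref{lemestEF}. The identity term is handled by deforming the contour so that its integral vanishes, and for the remaining term one writes $A_h^{-\beta}A_h(t_0) = (a^2(t_0))^{1-\beta}A_h(t_0)^{1-\beta}$ and invokes the second resolvent bound with $\gamma=1-\beta$, arriving at $\|A_h^{-\beta}F_h'(t,t_0)\|\le Ct^{\alpha\beta-1}$.

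The main obstacle is ensuring that all of these resolvent-type bounds are uniform in the mesh parameter $h$, so that the constants do not degrade as $h\to 0$. This uniformity rests on the uniform Poincar\'e lower bound for the spectrum of $A_h$ on $X_h\subset H_0^1(\Omega)$, which guarantees that the spectrum of $A_h(t_0)$ sits in a sector whose geometry is independent of $h$. Once this uniformity is secured, the rest is a routine translation of the proof of Lemma \ref{lemestEF} into the discrete setting, as carried out in \cite{Jin2019}.
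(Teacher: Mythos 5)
Your argument is correct and follows essentially the same route the paper relies on: the sectorial resolvent bound $\|(z^\alpha+A_h(t_0))^{-1}\|\leq C|z|^{-\alpha}$ uniform in $h$ and $t_0$, the moment-type bound for $A_h(t_0)^{\gamma}(z^\alpha+A_h(t_0))^{-1}$, the Hankel contour with $\kappa=1/t$, and the identity $z^{\alpha}(z^{\alpha}+A_h(t_0))^{-1}=\mathbf{I}-A_h(t_0)(z^{\alpha}+A_h(t_0))^{-1}$ for the derivative estimate, exactly as indicated in the remark after Lemma \ref{lemestEF} and in the cited source \cite{Jin2019}, which the paper invokes instead of reproving the discrete case. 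The only blemish is a harmless constant: $A_h^{-\beta}A_h(t_0)=(a^2(t_0))^{-\beta}A_h(t_0)^{1-\beta}$ rather than $(a^2(t_0))^{1-\beta}A_h(t_0)^{1-\beta}$, which changes nothing since $a^2(t_0)$ is bounded above and below by \eqref{eqassumb}.
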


Next, we provide the following Lemma which helps us for the error estimate.

\begin{lemma}[\label{lemerror1}\cite{Bazhlekova2015}]
	Let $\phi\in L^2(\Omega)$, $z\in \Sigma_{\theta}$, $\omega=(z^\alpha \mathbf{I}+A)^{-1}\phi$, and $\omega_h=(z^\alpha \mathbf{I}+A_h)^{-1}P_h\phi$, where $\mathbf{I}$ denotes the identity operator. Then there holds
	\begin{equation*}
		\|\omega_h-\omega\|_{L^2(\Omega)}+h\|\nabla(\omega_h-\omega)\|_{L^2(\Omega)}\leq Ch^2\|\phi\|_{L^2(\Omega)}.
	\end{equation*}
\end{lemma}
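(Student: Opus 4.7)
The plan is to follow the standard Ritz-projection splitting together with uniform resolvent estimates for $A$ and $A_h$ on the sector $\Sigma_\theta$. I will write $\lambda=z^\alpha$, so that $\omega$ and $\omega_h$ satisfy $(\lambda\mathbf{I}+A)\omega=\phi$ and $(\lambda\mathbf{I}+A_h)\omega_h=P_h\phi$ respectively. Because $z\in\Sigma_\theta$ with $\theta\in(\pi/2,\pi)$, the powers $\lambda=z^\alpha$ stay in a sector $\Sigma_{\alpha\theta}$ that avoids the spectrum of $-A$ and $-A_h$, so the Hille–Yosida-type estimates $\|(\lambda\mathbf{I}+A)^{-1}\|\le C|\lambda|^{-1}$ and $\|(\lambda\mathbf{I}+A_h)^{-1}\|\le C|\lambda|^{-1}$ hold with constants independent of $h$ and $\lambda$.

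The first step is to split the error as $\omega_h-\omega=(\omega_h-R_h\omega)+(R_h\omega-\omega):=\theta+\rho$. For $\rho$ the desired bound is immediate from Lemma~\ref{lemprojection} combined with the elliptic regularity estimate $\|\omega\|_{\dot H^2(\Omega)}\le C\|\phi\|_{L^2(\Omega)}$, which itself I would derive from $A\omega=\phi-\lambda\omega$ and $\|\lambda(\lambda\mathbf{I}+A)^{-1}\|\le C$. For $\theta$, the key algebraic identity is obtained from the definition of $A_h$: since $(A_hR_h\omega,v_h)=(\nabla R_h\omega,\nabla v_h)=(\nabla\omega,\nabla v_h)=(A\omega,v_h)$ for every $v_h\in X_h$, we have $A_hR_h\omega=P_hA\omega=P_h\phi-\lambda P_h\omega$. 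Subtracting this from $(\lambda\mathbf{I}+A_h)\omega_h=P_h\phi$ gives
\begin{equation*}
(\lambda\mathbf{I}+A_h)\theta=-\lambda\,(R_h\omega-P_h\omega),
\end{equation*}
so that $\theta=-\lambda(\lambda\mathbf{I}+A_h)^{-1}(R_h\omega-P_h\omega)$.

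With this representation the $L^2$-estimate follows by applying the uniform bound $\|\lambda(\lambda\mathbf{I}+A_h)^{-1}\|\le C$ and then controlling $\|R_h\omega-P_h\omega\|_{L^2(\Omega)}\le\|R_h\omega-\omega\|_{L^2(\Omega)}+\|\omega-P_h\omega\|_{L^2(\Omega)}\le Ch^2\|\omega\|_{\dot H^2(\Omega)}\le Ch^2\|\phi\|_{L^2(\Omega)}$ via Lemma~\ref{lemprojection}. For the gradient part, I would use $\|\nabla\theta\|_{L^2(\Omega)}^2=(A_h\theta,\theta)$ and rewrite $A_h\theta=-\lambda(R_h\omega-P_h\omega)-\lambda\theta$ (or, equivalently, use the factorization $A_h^{1/2}\theta=-\lambda A_h^{1/2}(\lambda\mathbf{I}+A_h)^{-1}(R_h\omega-P_h\omega)$ together with $\|\lambda^{1/2}A_h^{1/2}(\lambda\mathbf{I}+A_h)^{-1}\|\le C$); combining with an inverse estimate on $X_h$ if necessary one obtains $\|\nabla\theta\|_{L^2(\Omega)}\le Ch\|\phi\|_{L^2(\Omega)}$. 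Adding the $\theta$ and $\rho$ contributions yields the stated bound.

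The main technical obstacle is to keep all constants independent of both $h$ and of $\lambda=z^\alpha$ as $|z|\to\infty$; this is exactly where the sectorial resolvent estimates $\|\lambda^k(\lambda\mathbf{I}+A_h)^{-1}\|$ for $k\in\{0,1\}$ (and an interpolated version for $k=1/2$) are essential, and why the decomposition must be done through $R_h\omega-P_h\omega$ rather than $R_h\omega-\omega$ — otherwise one only recovers an $O(h)$ rate in $L^2$.
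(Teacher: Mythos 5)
The paper does not prove this lemma; it is imported verbatim from \cite{Bazhlekova2015}, where the argument runs through Galerkin orthogonality for the sesquilinear form $z^\alpha(\cdot,\cdot)+(\nabla\cdot,\nabla\cdot)$, a sectorial coercivity bound of the type $|z^\alpha|\,\|v\|^2+\|\nabla v\|^2\leq C\,|z^\alpha\|v\|^2+\|\nabla v\|^2|$, and a Nitsche duality step for the $L^2$ rate. Your route is genuinely different and, for the $L^2$ bound, arguably cleaner: the identity $A_hR_h=P_hA$ (which this paper also uses elsewhere) gives $(z^\alpha\mathbf{I}+A_h)(\omega_h-R_h\omega)=-z^\alpha(R_h\omega-P_h\omega)$ exactly, and the uniform sectorial bound $\|z^\alpha(z^\alpha\mathbf{I}+A_h)^{-1}\|\leq C$ together with Lemma \ref{lemprojection} and $\|\omega\|_{\dot H^2(\Omega)}\leq C\|\phi\|_{L^2(\Omega)}$ yields $\|\omega_h-R_h\omega\|_{L^2(\Omega)}\leq Ch^2\|\phi\|_{L^2(\Omega)}$ with no duality argument. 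One caveat on the gradient part: the factorization you propose, $\|A_h^{1/2}\theta\|\leq |z|^{\alpha/2}\,\|z^{\alpha/2}A_h^{1/2}(z^\alpha\mathbf{I}+A_h)^{-1}\|\,\|R_h\omega-P_h\omega\|$, combined only with the $O(h^2)$ bound on $\|R_h\omega-P_h\omega\|_{L^2(\Omega)}$ leaves a factor $|z|^{\alpha/2}h^2$, which is not $O(h)$ uniformly in $z\in\Sigma_\theta$; to close it this way you would need the intermediate bound $\|R_h\omega-P_h\omega\|_{L^2(\Omega)}\leq Ch\|\nabla\omega\|_{L^2(\Omega)}\leq Ch|z|^{-\alpha/2}\|\phi\|_{L^2(\Omega)}$, which you do not state. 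Your fallback — the inverse inequality $\|\nabla v_h\|_{L^2(\Omega)}\leq Ch^{-1}\|v_h\|_{L^2(\Omega)}$ on the quasi-uniform mesh applied to $\theta=\omega_h-R_h\omega\in X_h$, after the $L^2$ bound is in hand — does close the argument immediately, so the proof as a whole is correct; I would just make that step the primary one rather than an afterthought.
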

To get the error estimate for the space semi-discrete scheme. Denote $e_h(t)=P_hW(t)-W_h(t)$.  From \eqref{eqrepsW} and \eqref{eqrepsWh}, there exists
\begin{equation}\label{eqehsep}
	\begin{aligned}
		e_h(t)=&(P_hF(t,t_0)W_0-F_h(t,t_0)W_{0,h})+\int_0^t(P_hF(t-s,t_0)-F_h(t-s,t_0)P_h)f(s)ds\\
		&+\int_0^t(P_hF(t-s,t_0)-F_h(t-s,t_0)P_h)(A(t_0)-A(s))\,_0D^{1-\alpha}_sW(s)ds\\
		&+\int_0^tF_h(t-s,t_0)((P_hA(t_0)-P_hA(s))\,_0D^{1-\alpha}_sW(s)-(A_h(t_0)-A_h(s))\,_0D^{1-\alpha}_sW_h(s))ds\\
		=&\uppercase\expandafter{\romannumeral1}(t)+\uppercase\expandafter{\romannumeral2}(t)+\uppercase\expandafter{\romannumeral3}(t)+\uppercase\expandafter{\romannumeral4}(t).
	\end{aligned}
\end{equation}

Then we need to provide the bounds of $\uppercase\expandafter{\romannumeral1}(t)$, $\uppercase\expandafter{\romannumeral2}(t)$, $\uppercase\expandafter{\romannumeral3}(t)$, and  $\uppercase\expandafter{\romannumeral4}(t)$ in \eqref{eqehsep}.

\begin{lemma}\label{lemspaestI}
	If $W_0\in L^2(\Omega)$, there exists the estimate
	\begin{equation*}
		\begin{aligned}
			\|\uppercase\expandafter{\romannumeral1}(t)\|_{L^2(\Omega)}\leq& Ct^{-\alpha}h^2\|W_0\|_{L^2(\Omega)}.\\
		\end{aligned}
	\end{equation*}
\end{lemma}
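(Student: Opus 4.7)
The plan is to reduce $\uppercase\expandafter{\romannumeral1}(t)$ to a contour integral via the definitions of $F$ and $F_h$ and then bound the integrand pointwise using Lemma \ref{lemerror1}. Since $W_0\in L^2(\Omega)$, we have $W_{0,h}=P_hW_0$, so
\begin{equation*}
\uppercase\expandafter{\romannumeral1}(t)=\frac{1}{2\pi\mathbf{i}}\int_{\Gamma_{\theta,\kappa}}e^{zt}z^{\alpha-1}\bigl[P_h(z^\alpha+A(t_0))^{-1}-(z^\alpha+A_h(t_0))^{-1}P_h\bigr]W_0\,dz.
\end{equation*}
I would first establish the pointwise (in $z$) bound
\begin{equation*}
\bigl\|\bigl(P_h(z^\alpha+A(t_0))^{-1}-(z^\alpha+A_h(t_0))^{-1}P_h\bigr)W_0\bigr\|_{L^2(\Omega)}\leq Ch^2\|W_0\|_{L^2(\Omega)},
\end{equation*}
uniformly in $z\in\Gamma_{\theta,\kappa}$ and $t_0\in[0,T]$.

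To do so, I would extend Lemma \ref{lemerror1} from $A,A_h$ to $A(t_0),A_h(t_0)$. Writing $(z^\alpha+A(t_0))^{-1}=a^2(t_0)(a^2(t_0)z^\alpha+A)^{-1}$ and similarly for the discrete operator, the bounds \eqref{eqassumb} on $a^2(t_0)$ let me apply Lemma \ref{lemerror1} with $z^\alpha$ replaced by the complex number $a^2(t_0)z^\alpha$, which still lies in a sector $\Sigma_\theta$. Setting $\omega=(z^\alpha+A(t_0))^{-1}W_0$ and $\omega_h=(z^\alpha+A_h(t_0))^{-1}P_hW_0$, this yields $\|\omega-\omega_h\|_{L^2(\Omega)}\leq Ch^2\|W_0\|_{L^2(\Omega)}$. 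Adding and subtracting $P_h\omega$ and invoking Lemma \ref{lemprojection} reduces the remaining term to $\|P_h\omega-\omega\|_{L^2(\Omega)}\leq Ch^2\|\omega\|_{\dot H^2(\Omega)}$. The $\dot H^2$ norm of $\omega$ is controlled by $\|W_0\|_{L^2(\Omega)}$ via the identity $A(t_0)\omega=W_0-z^\alpha\omega$ combined with the resolvent estimate $\|(z^\alpha+A(t_0))^{-1}\|\leq C|z|^{-\alpha}$, so altogether the desired pointwise bound follows.

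Finally I would deform the contour to $\Gamma_{\theta,\kappa}$ with $\kappa=1/t$ and carry out the routine estimate
\begin{equation*}
\int_{\Gamma_{\theta,1/t}}|e^{zt}||z|^{\alpha-1}\,|dz|\leq Ct^{-\alpha},
\end{equation*}
by splitting into the circular arc $|z|=1/t$ and the two rays $z=re^{\pm\mathbf{i}\theta}$, $r\geq 1/t$, on which $\mathrm{Re}(zt)\leq rt\cos\theta<0$. Combining this with the uniform pointwise bound produces $\|\uppercase\expandafter{\romannumeral1}(t)\|_{L^2(\Omega)}\leq Ct^{-\alpha}h^2\|W_0\|_{L^2(\Omega)}$.

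The main obstacle is the extension of Lemma \ref{lemerror1} to the variable-coefficient operators $A(t_0),A_h(t_0)$ with a constant independent of $t_0$; once that reduction to a pure-$A$ resolvent comparison is carried out by factoring out $a^2(t_0)$, everything else is the standard Dunford-calculus argument.
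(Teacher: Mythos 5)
Your proposal is correct and follows essentially the same route as the paper: split $P_hF-F_hP_h$ into a projection error $(P_h-\mathbf{I})$ piece controlled by Lemma \ref{lemprojection} and a resolvent-comparison piece controlled by Lemma \ref{lemerror1}, then integrate over $\Gamma_{\theta,1/t}$ to produce the $t^{-\alpha}$ factor. The paper's three-line proof simply cites these two lemmas without comment; your explicit reduction of the variable-coefficient resolvent to Lemma \ref{lemerror1} by factoring out $a^2(t_0)$ (using \eqref{eqassumb} so that $a^2(t_0)z^{\alpha}$ stays in the sector) is a detail the paper leaves implicit, and it is handled correctly.
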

\begin{proof}
	According to Lemmas \ref{lemprojection} and \ref{lemerror1}, 
	\begin{equation*}
		\begin{aligned}
			\|\uppercase\expandafter{\romannumeral1}(t)\|_{L^2(\Omega)}\leq& \|(P_hF(t,t_0)-F_h(t,t_0)P_h)W_0\|_{L^2(\Omega)}\\
			\leq& \|(P_hF(t,t_0)-F(t,t_0))W_0\|_{L^2(\Omega)}+\|(F(t,t_0)-F_h(t,t_0)P_h)W_0\|_{L^2(\Omega)}\\
			\leq& C(T)t^{-\alpha}h^2\|W_0\|_{L^2(\Omega)},
		\end{aligned}
	\end{equation*}
	which leads to the desired result.
\end{proof}
Similarly, we have the following estimate of $\uppercase\expandafter{\romannumeral2}(t)$.
\begin{lemma}\label{lemspaestII}
	If $f(0)\in L^2(\Omega)$ and $\int_0^t\|f'(s)\|_{L^2(\Omega)}ds<\infty$, then $\uppercase\expandafter{\romannumeral2}(t)$ can be bounded by
	\begin{equation*}
		\begin{aligned}
			\|\uppercase\expandafter{\romannumeral2}(t)\|_{L^2(\Omega)}\leq& C(T)h^2\left (\|f(0)\|_{L^2(\Omega)}+\int_{0}^{t}\|f'(s)\|_{L^2(\Omega)}ds\right ).\\
		\end{aligned}
	\end{equation*}
\end{lemma}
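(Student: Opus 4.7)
The plan is to reduce the estimate for $\uppercase\expandafter{\romannumeral2}(t)$ to the pointwise operator bound that really drives the proof of Lemma \ref{lemspaestI}, namely
\[
\|(P_hF(\tau,t_0)-F_h(\tau,t_0)P_h)\phi\|_{L^2(\Omega)} \leq C\tau^{-\alpha}h^2\|\phi\|_{L^2(\Omega)}
\]
for every $\phi\in L^2(\Omega)$ and $\tau>0$; this inequality follows from Lemmas \ref{lemprojection}, \ref{lemerror1} and \ref{lemestEF} exactly as in that previous proof, applied to a generic $L^2$ argument rather than specifically to $W_0$. I would first bring the $L^2$-norm inside the time integral defining $\uppercase\expandafter{\romannumeral2}(t)$ and apply this bound with $\tau=t-s$ and $\phi=f(s)$ to obtain
\[
\|\uppercase\expandafter{\romannumeral2}(t)\|_{L^2(\Omega)} \leq Ch^2\int_0^t(t-s)^{-\alpha}\|f(s)\|_{L^2(\Omega)}\,ds.
\]

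Next I would control $\|f(s)\|_{L^2(\Omega)}$ by the fundamental theorem of calculus, writing
\[
\|f(s)\|_{L^2(\Omega)} \leq \|f(0)\|_{L^2(\Omega)} + \int_0^s\|f'(\xi)\|_{L^2(\Omega)}\,d\xi,
\]
and splitting the resulting double integral into two pieces. Since $\alpha\in(0,1)$, the kernel $(t-s)^{-\alpha}$ is integrable on $(0,t)$ with $\int_0^t(t-s)^{-\alpha}\,ds = t^{1-\alpha}/(1-\alpha) \leq C(T)$, so the $f(0)$ contribution is immediately $C(T)h^2\|f(0)\|_{L^2(\Omega)}$. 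For the $f'$ contribution I would either invoke Fubini to exchange the order of integration or simply use the monotone estimate $\int_0^s\|f'(\xi)\|_{L^2(\Omega)}\,d\xi \leq \int_0^t\|f'(\xi)\|_{L^2(\Omega)}\,d\xi$; in either case the remaining $s$-integral is again $O(T^{1-\alpha})$, producing the bound $C(T)h^2\int_0^t\|f'(s)\|_{L^2(\Omega)}\,ds$. Adding the two pieces gives the stated estimate.

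The main (and essentially only) technical point here is the integrability of the weakly singular convolution kernel $(t-s)^{-\alpha}$, which holds precisely because $\alpha<1$. All other ingredients --- the optimal $h^2$ operator-error bound for $P_hF - F_hP_h$ and the sectorial estimates on $F$ from Lemma \ref{lemestEF} --- are already in place from the proof of Lemma \ref{lemspaestI}, so no new functional-analytic input is needed. The decomposition of $f$ via $f(0)$ and $f'$ is what converts the naive bound into the form involving the hypotheses of the lemma.
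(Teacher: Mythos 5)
Your argument is correct and matches what the paper intends by ``similarly'': the same $Ch^2(t-s)^{-\alpha}$ operator bound for $P_hF-F_hP_h$ from the proof of Lemma \ref{lemspaestI} (via Lemmas \ref{lemprojection}, \ref{lemerror1}, and \ref{lemestEF}), combined with the decomposition $f(s)=f(0)+\int_0^s f'(\xi)\,d\xi$ and the integrability of the kernel since $\alpha<1$. Whether one integrates the operator kernel in time before or after taking norms (the paper's hint elsewhere is the convolution identity $f=f(0)+1\ast f'$) makes no difference here, so your proof is essentially the paper's.
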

As for $\uppercase\expandafter{\romannumeral3}(t)$, there exists the estimate
\begin{lemma}\label{lemspaestIII}
	If $W_0\in \dot{H}^\epsilon(\Omega)$, $f(0)\in L^2(\Omega)$, and $\int_0^t\|f'(s)\|_{L^2(\Omega)}ds<\infty$, then 
	\begin{equation*}
		\|\uppercase\expandafter{\romannumeral3}(t)\|_{L^2(\Omega)}\leq C(T)h^2\left (\|W_0\|_{\dot{H}^{\epsilon}(\Omega)}+\|f(0)\|_{L^2(\Omega)}+\int_{0}^{t}\|f'(s)\|_{L^2(\Omega)}ds\right ).
	\end{equation*}
\end{lemma}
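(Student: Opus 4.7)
The plan is to mimic the arguments for $\uppercase\expandafter{\romannumeral1}(t)$ and $\uppercase\expandafter{\romannumeral2}(t)$, but with the extra weight $|t_0-s|$ coming from the variable coefficient, which will exactly offset the time singularity of the kernel $P_hF-F_hP_h$. The starting point is to take $L^2$-norms inside the integral and then invoke the operator estimate established in the proof of Lemma \ref{lemspaestI}, namely
\begin{equation*}
\|(P_hF(\tau,t_0)-F_h(\tau,t_0)P_h)\phi\|_{L^2(\Omega)}\leq C(T)\tau^{-\alpha}h^2\|\phi\|_{L^2(\Omega)},
\end{equation*}
which follows from Lemmas \ref{lemprojection} and \ref{lemerror1} after inserting the contour representation \eqref{eqdefF}.

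Applying this pointwise at $\tau=t-s$ with $\phi=(A(t_0)-A(s))G(s)$ yields
\begin{equation*}
\|\uppercase\expandafter{\romannumeral3}(t)\|_{L^2(\Omega)}\leq C(T)h^2\int_0^t(t-s)^{-\alpha}\|(A(t_0)-A(s))G(s)\|_{L^2(\Omega)}ds.
\end{equation*}
Now I would use \eqref{eqassumae} to bound $\|(A(t_0)-A(s))G(s)\|_{L^2(\Omega)}\leq C|t_0-s|\|G(s)\|_{\dot H^2(\Omega)}$, and finally (as in Theorems \ref{thmregofG} and \ref{thmregofW}) set $t_0=t$, which turns the weight into $(t-s)^{1-\alpha}$ and produces an integrable kernel.

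It remains to insert the regularity of $G(s)$. By Theorem \ref{thmregofG},
\begin{equation*}
\|G(s)\|_{\dot H^2(\Omega)}\leq C(T)s^{\epsilon\alpha/2-1}\|W_0\|_{\dot H^\epsilon(\Omega)}+C(T)\|f(0)\|_{L^2(\Omega)}+C(T)\int_0^s\|f'(r)\|_{L^2(\Omega)}dr.
\end{equation*}
Substituting this into the previous bound gives three convolution integrals in $s$. The only delicate one is $\int_0^t(t-s)^{1-\alpha}s^{\epsilon\alpha/2-1}ds$, which equals a Beta function times $t^{2-\alpha+\epsilon\alpha/2}$ and is therefore bounded on $[0,T]$ as long as $\epsilon>0$; the remaining two integrals reduce to $\int_0^t(t-s)^{1-\alpha}ds\leq C(T)$.

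The main obstacle I expect is purely a matter of bookkeeping: I must be careful to fix $t_0$ in the representation \eqref{eqrepsW}, carry it through the decomposition \eqref{eqehsep}, apply the operator bound with $t_0$ still free, and only at the very end specialize to $t_0=t$ so that the factor $|t_0-s|$ becomes $(t-s)$ and cancels part of the singularity $(t-s)^{-\alpha}$. No Gr\"onwall argument is needed here because $G(s)$ has already been estimated a priori, so the proof is essentially a direct computation once the key operator estimate and Theorem \ref{thmregofG} are in place.
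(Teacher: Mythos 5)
Your proposal is correct and follows essentially the same route as the paper: split $P_hF-F_hP_h$ via Lemmas \ref{lemprojection} and \ref{lemerror1} to get the $Ch^2(t_0-s)^{-\alpha}$ operator bound, use \eqref{eqassumae} to produce the factor $|t_0-s|$ that cancels the singularity, insert Theorem \ref{thmregofG}, and set $t_0=t$ at the end. The only quibble is a harmless exponent slip (the Beta-function integral scales like $t^{1-\alpha+\epsilon\alpha/2}$, not $t^{2-\alpha+\epsilon\alpha/2}$), which does not affect boundedness on $[0,T]$.
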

\begin{proof}
	According to \eqref{eqassumae}, Lemmas \ref{lemprojection}, \ref{lemerror1}, and Theorem \ref{thmregofG}, we have
	\begin{equation*}
		\begin{aligned}
			\|\uppercase\expandafter{\romannumeral3}(t_0)\|_{L^2(\Omega)}\leq&\int_0^{t_0}\|P_hF(t_0-s,t_0)-F(t_0-s,t_0)\|\|(A(t_0)-A(s))\,_0D^{1-\alpha}_sW(s)\|_{L^2(\Omega)}ds\\
			&+\int_0^{t_0}\|F(t_0-s,t_0)-F_h(t_0-s,t_0)P_h\|\|(A(t_0)-A(s))\,_0D^{1-\alpha}_sW(s)\|_{L^2(\Omega)}ds\\
			\leq&Ch^2\int_0^{t_0}(t_0-s)^{1-\alpha}\|\,_0D^{1-\alpha}_sW(s)\|_{\dot{H}^2(\Omega)}ds\\
			\leq&Ch^2\left (\|W_0\|_{\dot{H}^{\epsilon}(\Omega)}+\|f(0)\|_{L^2(\Omega)}+\int_{0}^{t_0}\|f'(s)\|_{L^2(\Omega)}ds\right ).
		\end{aligned}
	\end{equation*}
	Taking $t_0=t$ leads to the desired result.
\end{proof}
 To estimate $\|\uppercase\expandafter{\romannumeral4}(t)\|_{L^2(\Omega)}$,  introducing $\upsilon_h(t)=\,_0D^{1-\alpha}_te_h$ results in
\begin{equation*}
	\begin{aligned}
		\upsilon_h(t)=&(P_hE(t,t_0)W_0-E_h(t,t_0)W_{0,h})+\int_0^t(P_hE(t-s,t_0)-E_h(t-s,t_0)P_h)f(s)ds\\
		&+\int_0^t(P_hE(t-s,t_0)-E_h(t-s,t_0)P_h)(A(t_0)-A(s))G(s)ds\\
		&+\int_0^tE_h(t-s,t_0)((P_hA(t_0)-P_hA(s))G(s)-(A_h(t_0)-A_h(s))G_h(s))ds
		=\sum_{i=1}^{4}\upsilon_{i,h}(t).
	\end{aligned}
\end{equation*}

Next, we consider the estimate of $\|\upsilon_h(t)\|_{L^2(\Omega)}$, which helps to get the estimate of  $\|\uppercase\expandafter{\romannumeral4}(t)\|_{L^2(\Omega)}$.
\begin{lemma}\label{lemepslonest}
		If $W_0\in \dot{H}^{\epsilon}(\Omega)$, $f(0)\in L^2(\Omega)$ and $\int_{0}^{t}\|f'(s)\|_{L^2(\Omega)}ds<\infty$, then we have 
	\begin{equation*}
		\|\upsilon_h(t)\|_{L^2(\Omega)}\leq Ch^2t^{\epsilon\alpha/2-1}\|W_0\|_{\dot{H}^\epsilon(\Omega)}+Ch^2\|f(0)\|_{L^2(\Omega)}+Ch^2\int_{0}^{t}\|f'(s)\|_{L^2(\Omega)}ds.
	\end{equation*}
\end{lemma}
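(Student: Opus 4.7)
The plan mirrors the derivation of Theorem \ref{thmregofG}: estimate the four pieces $\upsilon_{1,h},\ldots,\upsilon_{4,h}$ separately, with the first three producing the stated bound and the fourth yielding an additional integral of $\|\upsilon_h(s)\|_{L^2(\Omega)}$ that is absorbed by Gr\"onwall's inequality. Throughout I would take $t_0=t$ at the end, so $|t_0-s|$ and $|t-s|$ coincide in the final bound.

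For $\upsilon_{1,h}$ I would combine Lemma \ref{lemerror1} with the resolvent bound $\|A^{1-\epsilon/2}(z^\alpha+A(t_0))^{-1}\|\le C|z|^{-\alpha\epsilon/2}$ (the same ingredient underlying Theorem \ref{thmregofG}), insert it into the contour representation of $P_hE(t,t_0)-E_h(t,t_0)P_h$, and deform to $\Gamma_{\theta,1/t}$ to pull out the singular factor $t^{\alpha\epsilon/2-1}$; this gives $\|\upsilon_{1,h}(t)\|_{L^2(\Omega)}\le Ch^2 t^{\alpha\epsilon/2-1}\|W_0\|_{\dot H^\epsilon(\Omega)}$. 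For $\upsilon_{2,h}$ I would write $f(s)=f(0)+\int_0^s f'(\tau)\,d\tau$, integrate the $f(0)$ piece using $\int_0^t E\,ds$ together with Lemma \ref{lemerror1}, and handle the $f'$ piece by Young's convolution inequality, producing $Ch^2(\|f(0)\|_{L^2(\Omega)}+\int_0^t\|f'(s)\|_{L^2(\Omega)}\,ds)$. For $\upsilon_{3,h}$ I would use \eqref{eqassumae} to expose a $(t_0-s)$ factor, apply Lemma \ref{lemerror1} to the resolvent difference, and appeal to Theorem \ref{thmregofG} for $\|G(s)\|_{\dot H^2(\Omega)}$, arriving at the same bound.

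The decisive step is $\upsilon_{4,h}$. Using $G_h(s)=P_hG(s)-\upsilon_h(s)$, the integrand splits into $[P_h(A(t_0)-A(s))-(A_h(t_0)-A_h(s))P_h]G(s)+(A_h(t_0)-A_h(s))\upsilon_h(s)$. Since $A(t)=A/a^2(t)$ and $A_h(t)=A_h/a^2(t)$, the first bracket equals $(1/a^2(t_0)-1/a^2(s))(P_hA-A_hP_h)G(s)$, and the Ritz-projection identity $A_hR_h=P_hA$ (valid because $G(s)\in\dot H^2(\Omega)$ by Theorem \ref{thmregofG}) lets me rewrite $P_hA-A_hP_h=A_h(R_h-P_h)$. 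The resulting $\upsilon_{4,h}$ has two integrals, in each of which the factor $|1/a^2(t_0)-1/a^2(s)|\le C(t-s)$ from \eqref{eqassuma} exactly cancels the $\|A_hE_h(t-s,t_0)\|\le C(t-s)^{-1}$ singularity from Lemma \ref{lemestEFh}. The commutator integral is then bounded by $Ch^2\int_0^t\|G(s)\|_{\dot H^2(\Omega)}\,ds$, which Theorem \ref{thmregofG} controls by the data, while the remaining term is bounded by $C\int_0^t\|\upsilon_h(s)\|_{L^2(\Omega)}\,ds$. Summing the four contributions and applying Gr\"onwall's inequality yields the claim. The main obstacle is precisely this $\upsilon_{4,h}$ analysis, where extracting the $h^2$ factor from the commutator $P_hA-A_hP_h$ (which carries no inherent operator-norm smallness) requires the identity $A_h(R_h-P_h)$ in tandem with the $|t_0-s|$ versus $\|A_hE_h\|$ cancellation.
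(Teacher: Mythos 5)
Your proposal is correct and follows essentially the same route as the paper: the same four-term splitting, the same treatment of $\upsilon_{4,h}$ via $G_h=P_hG-\upsilon_h$, the identity $A_hR_h=P_hA$, the cancellation of $|1/a^2(t_0)-1/a^2(s)|\leq C(t_0-s)$ against $\|A_hE_h(t_0-s,t_0)\|\leq C(t_0-s)^{-1}$, and a final Gr\"onwall step. The only cosmetic difference is in $\upsilon_{1,h}$, where the paper proves the $L^2(\Omega)$ and $\dot{H}^2(\Omega)$ endpoint bounds separately and interpolates to reach $\dot{H}^{\epsilon}(\Omega)$, whereas you propose a direct contour estimate with the fractional power $A^{-\epsilon/2}$; both are standard and equivalent.
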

\begin{proof}
	According to Lemma \ref{lemprojection}, we have the estimates
	\begin{equation*}
		\left \|(P_hE(t,t_0)-E(t,t_0))W_0\right \|_{L^2(\Omega)}\leq\left \{\begin{aligned}
		&Ch^2t^{-1}\|W_0\|_{L^2(\Omega)},\qquad W_0\in L^2(\Omega),\\
		&Ch^2t^{\alpha-1}\|W_0\|_{\dot{H}^2(\Omega)},\qquad W_0\in \dot{H}^2(\Omega).
		\end{aligned}\right.
	\end{equation*}
	If $W_0\in L^2(\Omega)$, according to Lemma \ref{lemerror1}, the following estimate holds
	\begin{equation*}
		\begin{aligned}
		&\left \|(E(t,t_0)-E_h(t,t_0)P_h)W_0\right \|_{L^2(\Omega)}\\
		\leq& \left\| \int_{\Gamma_{\theta,\kappa}}e^{zt}((z^{\alpha}+A(t_0))^{-1}-(z^{\alpha}+A_h(t_0))^{-1}P_h)W_0dz\right \|_{L^2(\Omega)}\leq Ch^2t^{\alpha-1}\|W_0\|_{L^2(\Omega)}.
		\end{aligned}
	\end{equation*}
	If $W_0\in \dot{H}^2(\Omega)$, then one has
	\begin{equation*}
		\begin{aligned}
			&\left \|(E(t,t_0)-E_h(t,t_0)R_h)W_0\right \|_{L^2(\Omega)}\\
			\leq&\left\| \int_{\Gamma_{\theta,\kappa}}e^{zt}z^{-\alpha}(A(t_0)(z^{\alpha}+A(t_0))^{-1}-A_h(t_0)(z^{\alpha}+A_h(t_0))^{-1}R_h)W_0dz\right \|_{L^2(\Omega)}+\left\| \int_{\Gamma_{\theta,\kappa}}e^{zt}z^{-\alpha}(\mathbf{I}-R_h)W_0dz\right \|_{L^2(\Omega)}\\
			\leq&\left\| \int_{\Gamma_{\theta,\kappa}}e^{zt}z^{-\alpha}((z^{\alpha}+A(t_0))^{-1}-(z^{\alpha}+A_h(t_0))^{-1}P_h)A(t_0)W_0dz\right \|_{L^2(\Omega)}+Ch^2t^{\alpha-1}\|W_0\|_{\dot{H}^2(\Omega)}\\\leq& Ch^2t^{\alpha-1}\|W_0\|_{\dot{H}^2(\Omega)},
		\end{aligned}
	\end{equation*}
	because of Lemma \ref{lemerror1}, the fact $A_hR_h=P_hA$ \cite{Bazhlekova2015}, and $(z^\alpha+A)^{-1}=z^{-\alpha}(\mathbf{I}-A(z^{\alpha}+A)^{-1})$; and here $\mathbf{I}$ is the identity operator. Thus we get
	\begin{equation*}
		\begin{aligned}
			\|\upsilon_{1,h}(t_0)\|_{L^2(\Omega)}\leq&\left \|(P_hE(t,t_0)-E(t,t_0))W_0\right \|_{L^2(\Omega)}\\
			&+\left \|(E(t,t_0)-E_h(t,t_0)P_h)W_0\right \|_{L^2(\Omega)}\leq Ch^2t^{-1}\|W_0\|_{L^2(\Omega)} ~{\rm for} ~W_0\in L^2(\Omega)
		\end{aligned}
	\end{equation*}
	and
	\begin{equation*}
		\begin{aligned}
			\|\upsilon_{1,h}(t_0)\|_{L^2(\Omega)}\leq&\left \|(P_hE(t,t_0)-E(t,t_0))W_0\right \|_{L^2(\Omega)}\\
			&+\left \|(E(t,t_0)-E_h(t,t_0)R_h)W_0\right \|_{L^2(\Omega)}\leq Ch^2t^{\alpha-1}\|W_0\|_{\dot{H}^2(\Omega)} ~｛{\rm for} ~W_0\in \dot{H}^2(\Omega).
		\end{aligned}
	\end{equation*}
	Taking $t_0=t$ and using the interpolation property \cite{Adams2003} lead to
	\begin{equation*}
		\|\upsilon_{1,h}(t)\|_{L^2(\Omega)}\leq Ch^2t^{\epsilon\alpha/2-1}\|W_0\|_{\dot{H}^\epsilon(\Omega)}.
	\end{equation*}
	Similarly, one has
	\begin{equation*}
		\begin{aligned}
			&\|\upsilon_{2,h}(t)\|_{L^2(\Omega)}\leq Ch^2\|f(0)\|_{L^2(\Omega)}+Ch^2\int_{0}^{t}\|f'(s)\|_{L^2(\Omega)}ds,
		\end{aligned}
	\end{equation*}
according to Lemmas \ref{lemprojection}, \ref{lemerror1}, and the convolution property $f(t)=f(0)+(1\ast f')(t)$. As for $\upsilon_{3,h}(t)$, Theorem \ref{thmregofG} gives
	\begin{equation*}
		\begin{aligned}
			\|\upsilon_{3,h}(t_0)\|_{L^2(\Omega)}\leq& Ch^2\int_{0}^{t_0}\|G(s)\|_{\dot{H}^2(\Omega)}ds
			\leq Ch^2\|W_0\|_{\dot{H}^{\epsilon}(\Omega)}+Ch^2\|f(0)\|_{L^2(\Omega)}+Ch^2\int_0^{t_0}\|f'(s)\|_{L^2(\Omega)}ds.
		\end{aligned}
	\end{equation*}
Combining Lemma \ref{lemestEFh},  \eqref{eqassuma}, and $A_hR_h=P_hA$ results in
	\begin{equation*}
		\begin{aligned}
			\|\upsilon_{4,h}(t_0)\|_{L^2(\Omega)}\leq &\left \|\int_0^{t_0}E_h(t_0-s,t_0)(A_h(t_0)-A_h(s))\upsilon_h(s)ds\right \|_{L^2(\Omega)}\\
			&+\left \|\int_0^{t_0}E_h(t_0-s,t_0)A_h(t_0)(1-a^2(t_0)/a^2(s))(R_h-\mathbf{I})G(s)ds\right \|_{L^2(\Omega)}\\
			\leq& C\int_0^{t_0}\|\upsilon_h(s)\|_{L^2(\Omega)}ds+Ch^2\int_0^{t_0}\|G(s)\|_{\dot{H}^2(\Omega)}ds.
		\end{aligned}
	\end{equation*}
	Thus Gr\"{on}wall's inequality and Theorem \ref{thmregofG} imply the desired result.
	
\end{proof}
\begin{lemma}\label{lemspaestIV}
		If $W_0\in \dot{H}^{\epsilon}(\Omega)$, $f(0)\in L^2(\Omega)$, and $\int_{0}^{t}\|f'(s)\|_{L^2(\Omega)}ds<C$, then there holds
		\begin{equation*}
				\|\uppercase\expandafter{\romannumeral4}(t)\|_{L^2(\Omega)}\leq Ch^2\left (\|W_0\|_{\dot{H}^{\epsilon}(\Omega)}+\|f(0)\|_{L^2(\Omega)}+\int_{0}^{t}\|f'(s)\|_{L^2(\Omega)}ds\right ).
		\end{equation*}
\end{lemma}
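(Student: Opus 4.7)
The plan is to carry out the same decomposition used for $\upsilon_{4,h}$ in the proof of Lemma \ref{lemepslonest}, but now with the kernel $F_h(\cdot,t_0)$ in place of $E_h(\cdot,t_0)$. First I would exploit the identity $P_hA=A_hR_h$ (hence $P_hA(t)=A_h(t)R_h$, since $A(t)$ and $A_h(t)$ differ from $A$ and $A_h$ only by the scalar factor $1/a^2(t)$) together with the decomposition $G_h=P_hG-\upsilon_h$ to rewrite the integrand of $\uppercase\expandafter{\romannumeral4}$ as
\begin{equation*}
(A_h(t_0)-A_h(s))(R_h-P_h)G(s)+(A_h(t_0)-A_h(s))\upsilon_h(s),
\end{equation*}
splitting $\uppercase\expandafter{\romannumeral4}(t)=J_1(t)+J_2(t)$ into a projection-error piece $J_1$ (carrying $(R_h-P_h)G$) and a piece $J_2$ driven by the auxiliary error $\upsilon_h$ already controlled in Lemma \ref{lemepslonest}.

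Next, for both pieces I would transfer the $A_h$ produced by $A_h(t_0)-A_h(s)=(1/a^2(t_0)-1/a^2(s))A_h$ onto the smoothing kernel using $\|A_hF_h(\tau,t_0)\|\le C\tau^{-\alpha}$ from Lemma \ref{lemestEFh}, and combine with the Lipschitz bound \eqref{eqassuma} on $1/a^2$ to obtain the pointwise inequality
\begin{equation*}
\|F_h(t-s,t_0)(A_h(t_0)-A_h(s))v\|_{L^2(\Omega)}\le C|t_0-s|(t-s)^{-\alpha}\|v\|_{L^2(\Omega)}.
\end{equation*}
This is the key step: setting $t_0=t$ at the end collapses $|t_0-s|(t-s)^{-\alpha}$ to the locally integrable kernel $(t-s)^{1-\alpha}$ and leaves only an $L^2$-norm on $v$ to estimate.

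Then, taking $v=(R_h-P_h)G(s)$ in $J_1$, Lemma \ref{lemprojection} gives $\|(R_h-P_h)G(s)\|_{L^2(\Omega)}\le Ch^2\|G(s)\|_{\dot H^2(\Omega)}$, and Theorem \ref{thmregofG} supplies an integrable-in-$s$ bound for $\|G(s)\|_{\dot H^2(\Omega)}$; the resulting time integral $\int_0^{t}(t-s)^{1-\alpha}s^{\epsilon\alpha/2-1}ds$ is of Beta-function type and bounded by $C(T)$. Taking $v=\upsilon_h(s)$ in $J_2$, Lemma \ref{lemepslonest} already supplies $\|\upsilon_h(s)\|_{L^2(\Omega)}$ with the desired $h^2$ prefactor and an integrable singular profile, so $J_2$ is handled analogously. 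Adding the two contributions yields the claimed bound.

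The main obstacle—already effectively handled in the preceding lemmas—is arranging the algebra so that $A_h$ lands on $F_h$ rather than on $G$ or directly on a projection error. Without the identity $P_hA=A_hR_h$, the natural splitting would produce terms like $A_hP_hG$ for which only the inverse-estimate bound $\|A_hP_hu\|_{L^2(\Omega)}\le Ch^{-1}\|\nabla u\|_{L^2(\Omega)}$ is available, and the $h^2$ rate would be lost. Once this decomposition is in place, the rest of the estimate reduces to inequalities already proven in Lemmas \ref{lemestEFh}, \ref{lemprojection}, \ref{lemepslonest} and Theorem \ref{thmregofG}.
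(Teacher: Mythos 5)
Your proposal is correct and follows essentially the same route as the paper: the identity $P_hA=A_hR_h$ yields exactly the splitting $(A_h(t_0)-A_h(s))(R_h-P_h)G(s)+(A_h(t_0)-A_h(s))\upsilon_h(s)$ (the paper's $\uppercase\expandafter{\romannumeral4}_1+\uppercase\expandafter{\romannumeral4}_2$), and both terms are then handled by moving $A_h(t_0)$ onto $F_h$ via Lemma \ref{lemestEFh}, the Lipschitz bound \eqref{eqassuma}, Lemma \ref{lemprojection} with Theorem \ref{thmregofG} for the first piece, and Lemma \ref{lemepslonest} for the second. No gaps.
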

\begin{proof}
	According to \eqref{eqehsep}, one can divide $\uppercase\expandafter{\romannumeral4}(t_0)$ into two parts, i.e.,
	\begin{equation*}
		\begin{aligned}
			\|\uppercase\expandafter{\romannumeral4}(t_0)\|_{L^2(\Omega)}\leq&\left \|\int_0^{t_0}F_h(t_0-s,t_0)(A_h(t_0)(R_h-P_h)-A_h(s)(R_h-P_h))\,_0D^{1-\alpha}_sW(s)ds\right \|_{L^2(\Omega)}\\
			&+\left \|\int_0^{t_0}F_h(t_0-s,t_0)(A_h(t_0)-A_h(s))\,_0D^{1-\alpha}_se_h(s)ds\right \|_{L^2(\Omega)}
			\leq \uppercase\expandafter{\romannumeral4}_1(t_0)+
			\uppercase\expandafter{\romannumeral4}_2(t_0).	\end{aligned}
	\end{equation*}
	By assumption \eqref{eqassuma}, Lemma \ref{lemestEF}, and Theorem \ref{thmregofG}, one can derive
	\begin{equation*}
		\begin{aligned}
			\uppercase\expandafter{\romannumeral4}_1(t_0)\leq&\int_{0}^{t_0}\|F_h(t_0-s,t_0)A_h(t_0)(1-a^2(t_0)/a^2(s))(R_h-P_h)\,_0D^{1-\alpha}_sW(s)\|_{L^2(\Omega)}ds\\
			\leq&Ch^2\int_0^{t_0}(t_0-s)^{1-\alpha}\|\,_0D^{1-\alpha}_sW(s)\|_{\dot{H}^2(\Omega)}ds\\
			\leq& Ch^2\left (\|W_0\|_{\dot{H}^{\epsilon}(\Omega)}+\|f(0)\|_{L^2(\Omega)}+\int_{0}^{t_0}\|f'(s)\|_{L^2(\Omega)}ds\right ).
		\end{aligned}
	\end{equation*}
	According to Lemmas \ref{lemestEFh}, \ref{lemepslonest} and assumption
	\eqref{eqassuma}, there are
	\begin{equation*}
		\begin{aligned}
			\uppercase\expandafter{\romannumeral4}_2(t_0)\leq& \int_{0}^{t_0}\|A_h(t_0)F_h(t_0-s,t_0)\|\|1-a^2(t_0)/a^2(s)\|\|\,_0D^{1-\alpha}_se_h(s)\|_{L^2(\Omega)}ds\\
			\leq& \int_{0}^{t_0}(t-s)^{1-\alpha}\|\,_0D^{1-\alpha}_se_h(s)\|_{L^2(\Omega)}ds\\
			\leq& Ch^2\left (\|W_0\|_{\dot{H}^{\epsilon}(\Omega)}+\|f(0)\|_{L^2(\Omega)}+\int_{0}^{t_0}\|f'(s)\|_{L^2(\Omega)}ds\right ).
		\end{aligned}
	\end{equation*}
Then the desired result is obtained after taking $t_0=t$.
\end{proof}
Combining Theorem \ref{thmregofW}, Lemmas \ref{lemprojection}, \ref{lemspaestI}, \ref{lemspaestII}, \ref{lemspaestIII}, and \ref{lemspaestIV} leads to the error estimate of spacial semi-discrete scheme.
\begin{theorem}\label{thmsemier}
	Let $W(t)$ and $W_h(t)$ be the solutions of Eqs. \eqref{eqretosol} and \eqref{eqsemischeme},  respectively. If $W_0\in \dot{H}^{\epsilon}(\Omega)$, $f(0)\in L^2(\Omega)$, and $\int_{0}^{t}\|f'(s)\|_{L^2(\Omega)}ds<+\infty$, then there holds
	\begin{equation*}
		\|W(t)-W_h(t)\|_{L^2(\Omega)}\leq Ch^2\left (t^{-\alpha}\|W_0\|_{\dot{H}^{\epsilon}(\Omega)}+\|f(0)\|_{L^2(\Omega)}+\int_{0}^{t}\|f'(s)\|_{L^2(\Omega)}ds\right ).
	\end{equation*}
\end{theorem}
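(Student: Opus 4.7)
The plan is to use the triangle inequality to split the error as
\begin{equation*}
\|W(t)-W_h(t)\|_{L^2(\Omega)} \le \|W(t)-P_hW(t)\|_{L^2(\Omega)} + \|P_hW(t)-W_h(t)\|_{L^2(\Omega)} = \|W(t)-P_hW(t)\|_{L^2(\Omega)} + \|e_h(t)\|_{L^2(\Omega)}.
\end{equation*}
The first piece is a pure projection error, while the second is already decomposed as $e_h(t) = \mathrm{I}(t) + \mathrm{II}(t) + \mathrm{III}(t) + \mathrm{IV}(t)$ in \eqref{eqehsep}, so each summand can be handled by a lemma we have already proved.

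For the projection term, I would invoke Lemma \ref{lemprojection} with $q=2$ to get $\|W(t)-P_hW(t)\|_{L^2(\Omega)} \le Ch^2 \|W(t)\|_{\dot{H}^2(\Omega)}$, and then insert the regularity bound from Theorem \ref{thmregofW} to obtain
\begin{equation*}
\|W(t)-P_hW(t)\|_{L^2(\Omega)} \le Ch^2\left(t^{-\alpha}\|W_0\|_{\dot{H}^{\epsilon}(\Omega)}+\|f(0)\|_{L^2(\Omega)}+\int_0^t\|f'(s)\|_{L^2(\Omega)}\,ds\right),
\end{equation*}
which already has exactly the shape claimed in the theorem.

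For $\|e_h(t)\|_{L^2(\Omega)}$, I would apply Lemma \ref{lemspaestI} to $\mathrm{I}(t)$, Lemma \ref{lemspaestII} to $\mathrm{II}(t)$, Lemma \ref{lemspaestIII} to $\mathrm{III}(t)$, and Lemma \ref{lemspaestIV} to $\mathrm{IV}(t)$. Summing these four bounds produces the same right-hand side as above (with the only singular-in-time factor $t^{-\alpha}$ coming from $\mathrm{I}(t)$, and noting that since $W_0 \in \dot H^{\epsilon}(\Omega) \subset L^2(\Omega)$ we can use $\|W_0\|_{L^2(\Omega)} \le C\|W_0\|_{\dot H^\epsilon(\Omega)}$ to absorb Lemma \ref{lemspaestI} into the stated form).

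There is no real obstacle at this stage, since all the nontrivial work, namely the delicate handling of the non-commutativity between $A(t)$ and $\,_0D^{1-\alpha}_t$ through the auxiliary variable $\upsilon_h$, the two-regime (via $L^2$ and $\dot{H}^2$) interpolation argument in Lemma \ref{lemepslonest}, and the Grönwall closure on $\mathrm{IV}_2(t_0)$, has been absorbed into the preparatory lemmas. The proof itself is therefore essentially a bookkeeping step: add the five bounds, observe that each has a factor of $h^2$ and a right-hand side dominated by $t^{-\alpha}\|W_0\|_{\dot{H}^{\epsilon}(\Omega)}+\|f(0)\|_{L^2(\Omega)}+\int_0^t\|f'(s)\|_{L^2(\Omega)}\,ds$, and conclude.
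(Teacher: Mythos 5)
Your proposal is correct and follows exactly the route the paper takes: the paper's (unwritten) proof is precisely the combination of Theorem \ref{thmregofW} with Lemma \ref{lemprojection} for the projection error $W-P_hW$, plus Lemmas \ref{lemspaestI}--\ref{lemspaestIV} for the four pieces of $e_h(t)$ in \eqref{eqehsep}. Your observation that $\|W_0\|_{L^2(\Omega)}\leq C\|W_0\|_{\dot H^\epsilon(\Omega)}$ absorbs the bound from Lemma \ref{lemspaestI} into the stated form is the only bookkeeping point worth noting, and you handled it.
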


\noindent{\section{Temporal discretization and error analysis}}
In this section, we use backward Euler convolution quadrature to discretize the time fractional derivative and  perform the error analyses of the fully discrete scheme for homogeneous and inhomogeneous problems. First, let the time step size $\tau=T/L$, $L\in\mathbb{N}$, $t_i=i\tau$, $i=0,1,\ldots,L$ and $0=t_0<t_1<\cdots<t_L=T$. Taking $\delta_\tau(\zeta)=\frac{1-\zeta}{\tau}$ and using convolution quadrature for Eq. \eqref{eqsemischeme}, we have the fully discrete scheme for any fixed integer $m\in[0,L]$,
\begin{equation}\label{eqfullscheme}
	\left \{\begin{aligned}
		&\frac{W^{n}_{h}-W^{n-1}_{h}}{\tau}+A_h(t_m)\sum_{i=0}^{n-1}d^{1-\alpha}_{i}W^{n-i}_h=f_h^n+(A_h(t_m)-A_h(t_n))\sum_{i=0}^{n-1}d^{1-\alpha}_{i}W^{n-i}_h,\\
		&W^{0}_{h}=W_{0,h},
	\end{aligned}\right .
\end{equation}
where
\begin{equation*}
\sum_{i=0}^{\infty}d^{1-\alpha}_i\zeta^i=\delta_\tau(\zeta)^{1-\alpha},\quad 0<\alpha<1,
\end{equation*}
and $W^n_h$ denotes the numerical solution of \eqref{eqsemischeme} at $t=t_n$.
Multiplying $\zeta^n$ on both sides of \eqref{eqfullscheme} and summing $n$ from $1$ to $\infty$ result in
\begin{equation*}
	\begin{aligned}
	\sum_{n=1}^{\infty}\frac{W^{n}_{h}-W^{n-1}_{h}}{\tau}\zeta^n+\sum_{n=1}^{\infty}A_h(t_m)\sum_{i=0}^{n-1}d^{1-\alpha}_{i}W^{n-i}_{h}\zeta^n=\sum_{n=1}^{\infty}f_h^n\zeta^n+\sum_{n=1}^{\infty}(A_h(t_m)-A_h(t_n))\sum_{i=0}^{n-1}d^{1-\alpha}_{i}W^{n-i}_h\zeta^n;
	\end{aligned}
\end{equation*}
after simple calculations, there exists
\begin{equation*}
	\begin{aligned}
	\left (\delta_\tau(\zeta)+A_h(t_m)\delta_\tau(\zeta)^{1-\alpha}\right )\sum_{n=1}^{\infty}W^n_h\zeta^n=\sum_{n=1}^{\infty}f_h^n\zeta^n+\sum_{n=1}^{\infty}(A_h(t_m)-A_h(t_n))\sum_{i=0}^{n-1}d^{1-\alpha}_{i}W^{n-i}_h\zeta^n+\frac{\zeta}{\tau}W^0_h.
	\end{aligned}
\end{equation*}
Thus, choosing $\xi_\tau=e^{-\tau(\kappa+1)}$, one has
\begin{equation}\label{eqrepsWnh}
	\begin{aligned}
	W^n_h=&\frac{1}{2\pi\mathbf{i} }\int_{\zeta=|\xi_\tau|}\zeta^{-n-1}\delta_\tau(\zeta)^{\alpha-1}\left (\delta_\tau(\zeta)^{\alpha}+A_h(t_m)\right )^{-1}\sum_{j=1}^{\infty}f_h^j\zeta^jd\zeta\\
	&+\frac{1}{2\pi\mathbf{i} }\int_{\zeta=|\xi_\tau|}\zeta^{-n-1}\delta_\tau(\zeta)^{\alpha-1}\left (\delta_\tau(\zeta)^{\alpha}+A_h(t_m)\right )^{-1}\frac{\zeta}{\tau}W^0_hd\zeta\\	
	&+\frac{1}{2\pi\mathbf{i} }\int_{\zeta=|\xi_\tau|}\zeta^{-n-1}\delta_\tau(\zeta)^{\alpha-1}\left (\delta_\tau(\zeta)^{\alpha}+A_h(t_m)\right )^{-1}\left (\sum_{j=1}^{\infty}(A_h(t_m)-A_h(t_j))\sum_{i=0}^{j-1}d^{1-\alpha}_{i}W^{j-i}_h\zeta^j\right )d\zeta.
	\end{aligned}
\end{equation}

Before providing the error estimates, we recall the following Lemma.
\begin{lemma}[\cite{Gunzburger2018}]\label{Lemseriesest}
	Let $0<\alpha<1$ and $\theta \in\left(\frac{\pi}{2}, \operatorname{arccot}\left(-\frac{2}{\pi}\right)\right)$ be given,  where $arccot$ means the inverse function of $\cot$, and let $\rho \in (0,1)$ be fixed. Then, both $\delta_\tau(e^{-z\tau})$ and $(\delta_\tau(e^{-z\tau})+A)^{-1}$ are analytic with respect to $z$ in the region enclosed by $\Gamma^\tau_\rho=\{z=-\ln{\rho}/\tau+\mathbf{i}y:y\in\mathbb{R}~and~|y|\leq \pi/\tau\}$, $\Gamma^\tau_{\theta,\kappa}=\{z\in \mathbb{C}:\kappa\leq |z|\leq\frac{\pi}{\tau\sin(\theta)},|\arg z|=\theta\}\bigcup\{z\in \mathbb{C}:|z|=\kappa,|\arg z|\leq\theta\}$, and the two lines $\mathbb{R}\pm \mathbf{i}\pi/\tau$ whenever $0<\kappa \leq \min (1 / T,-\ln (\rho) / \tau)$. Furthermore, there are the estimates
	\begin{equation*}
	\begin{aligned}
	&\delta_{\tau}\left(e^{-z \tau}\right) \in \Sigma_{\theta}&\forall z \in \Gamma_{\theta, \kappa}^{\tau},\\
	&C_{0}|z| \leq\left|\delta_{\tau}\left(e^{-z\tau }\right)\right| \leq C_{1}|z|&\forall z \in \Gamma_{\theta, \kappa}^{\tau},\\
	&\left|\delta_{\tau}\left(e^{-z\tau }\right)-z\right| \leq C \tau|z|^{2}&\forall z \in \Gamma_{\theta, \kappa}^{\tau},\\
	&\left|\delta_{\tau}\left(e^{-z\tau }\right)^{\alpha}-z^{\alpha}\right| \leq C \tau|z|^{\alpha+1}&\forall z \in \Gamma_{\theta, \kappa}^{\tau},
	\end{aligned}
	\end{equation*}
	where the constants $C_0$, $C_1$ and $C$ are independent of $\tau$ and $\kappa\in (0,\min (1 / T,-\ln (\rho) / \tau)]$.
	
\end{lemma}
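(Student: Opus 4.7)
The plan is to factor
\[
\delta_\tau(e^{-z\tau}) \;=\; \frac{1-e^{-z\tau}}{\tau} \;=\; z\, g(z\tau), \qquad g(w) := \frac{1-e^{-w}}{w},
\]
so that $g$ is entire with $g(0)=1$, and reduce every assertion to uniform estimates on $g$ over a bounded set. The crucial observation is that on $\Gamma^\tau_{\theta,\kappa}$ the quantity $|z\tau|$ is bounded: on the rays $|\arg z|=\theta$ one has $|z|\tau \leq \pi/\sin\theta$, while on the arc $|z|=\kappa$ one has $|z|\tau \leq \kappa T \leq 1$ from the assumption $\kappa \leq 1/T$.

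Analyticity of $\delta_\tau(e^{-z\tau})$ in $z$ is immediate because the apparent singularity at $z=0$ is removable, and analyticity of $(\delta_\tau(e^{-z\tau})+A)^{-1}$ then reduces to the non-spectral condition $-\delta_\tau(e^{-z\tau}) \notin [0,\infty)$, which itself is a consequence of the sector inclusion in item~2. To prove items~2 and~3 simultaneously, I would parametrize the contour as $z=re^{\pm i\theta}$ on the rays and $z=\kappa e^{i\psi}$ with $|\psi|\leq \theta$ on the arc, and compute the argument and modulus of $g(z\tau)$ directly. On the arc, $|z\tau|\leq 1$ makes $g(z\tau)$ a small perturbation of $1$, so both conclusions are immediate. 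On the rays, the argument of $g(z\tau)$ stays bounded by $\theta$ precisely when $\theta < \operatorname{arccot}(-2/\pi)$, which is exactly the hypothesis of the lemma; the modulus bound $C_0|z| \leq |\delta_\tau(e^{-z\tau})| \leq C_1|z|$ then follows from the continuity and non-vanishing of $g$ on the compact set $\{w = r\tau e^{\pm i\theta} : \kappa\tau \leq r\tau \leq \pi/\sin\theta\}$.

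For item~4, the two-term Taylor remainder $1-e^{-w}-w = -\tfrac{w^2}{2}h(w)$ with $h$ entire and bounded on bounded sets, applied at $w=z\tau$ and divided by $\tau$, yields $\delta_\tau(e^{-z\tau})-z = -\tau z^2 h(z\tau)/2$; since $|z\tau|$ is bounded on $\Gamma^\tau_{\theta,\kappa}$, item~4 follows with a uniform constant. For item~5, I would write
\[
\delta_\tau(e^{-z\tau})^\alpha - z^\alpha \;=\; z^\alpha\bigl(g(z\tau)^\alpha - 1\bigr),
\]
and then bound $|g(w)^\alpha - 1| \leq C|w|$ uniformly on the bounded range of $w=z\tau$: for $|w|$ small, this follows from $g(w)=1+O(w)$ together with item~2 (so that $g(w)\in\Sigma_\theta$ and $g(w)^\alpha$ is analytic at $1$), and on the compact complement of a neighbourhood of $0$ both $|g(w)^\alpha-1|$ and $|w|$ are bounded away from $0$ by constants. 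Hence $|\delta_\tau(e^{-z\tau})^\alpha - z^\alpha| \leq C|z|^\alpha\cdot|z|\tau = C\tau|z|^{\alpha+1}$.

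The main obstacle is the sector inclusion in item~2 on the ray portion of the contour, where $|z\tau|$ is not small and $g(z\tau)$ is not a perturbation of $1$. One must analyze the map $r \mapsto \arg g(r\tau e^{\pm i\theta})$ explicitly and invoke the precise restriction $\theta < \operatorname{arccot}(-2/\pi)$ to ensure that this argument never exceeds $\theta$; this is the content of the classical convolution-quadrature sector analysis originating with Lubich. Once this step is secured, items~3--5 reduce to elementary estimates on the bounded compact set swept out by $z\tau$.
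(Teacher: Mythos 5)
First, note that the paper does not prove this lemma at all: it is imported verbatim from the cited reference [Gunzburger--Li--Wang], so there is no in-paper proof to compare against. Your outline does follow the standard route used in that literature (Lubich-type analysis of the symbol $\delta_\tau$): the factorization $\delta_\tau(e^{-z\tau})=z\,g(z\tau)$ with $g(w)=(1-e^{-w})/w$, the observation that $|z\tau|$ stays in a fixed bounded set on the truncated contour, and the reduction of items 3--5 to uniform estimates on $g$ are all correct and are exactly the right reductions.

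The genuine gap is that the one step carrying all the content --- the sector inclusion $\delta_\tau(e^{-z\tau})\in\Sigma_\theta$ --- is not actually proved: you defer it to ``the classical convolution-quadrature sector analysis,'' which is precisely the part where the hypothesis $\theta<\operatorname{arccot}(-2/\pi)$ enters and from which the lower bound $C_0|z|\le|\delta_\tau(e^{-z\tau})|$, the analyticity of the resolvent, and the legitimacy of writing $\delta_\tau^\alpha=z^\alpha g(z\tau)^\alpha$ with principal branches all follow. Moreover, your claim that the arc case is ``immediate'' because $g(z\tau)$ is a small perturbation of $1$ is not quite right: for $|w|\le 1$ one only gets $|g(w)-1|\le e-2\approx 0.72$, which bounds $|\arg g(w)|$ by roughly $46^\circ$; at the corner point $\arg z=\theta$ this crude bound would allow $\arg\bigl(zg(z\tau)\bigr)$ to exceed $\theta$. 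What saves the inclusion there is the sign of the first-order term, $g(w)=1-w/2+O(w^2)$, so that $\arg g(z\tau)\approx-\tfrac{1}{2}\Im(z\tau)$ has the opposite sign to $\arg z$ and pulls the argument back into the sector; a short computation, but not a ``perturbation of $1$'' argument. Finally, in item 5 you should either verify that $g(z\tau)$ avoids the branch cut of $w\mapsto w^\alpha$ along the whole contour (so that $(zg)^\alpha=z^\alpha g^\alpha$), or switch to the safer standard device $\delta_\tau^\alpha-z^\alpha=\alpha\int_0^1\bigl(z+s(\delta_\tau-z)\bigr)^{\alpha-1}(\delta_\tau-z)\,ds$ combined with item 4 and the fact that the segment from $z$ to $\delta_\tau(e^{-z\tau})$ stays at distance $\gtrsim|z|$ from the origin.
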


Below we provide the error estimates of the homogeneous and inhomogeneous problems separately.
\noindent{\subsection{Error estimate for the inhomogeneous problem}}
In this subsection, we consider the error estimate between $W_h(t_n)$ and $W^n_h$ which are the solutions of Eqs. \eqref{eqsemischeme} and \eqref{eqfullscheme} with the initial value $W_{0}=0$.
Denote $e^n_h=W_h(t_n)-W^n_h$. By \eqref{eqrepsWh} and \eqref{eqrepsWnh}, there exists
\begin{equation}\label{errordd}
	\|e^n_h\|_{L^2(\Omega)}\leq \uppercase\expandafter{\romannumeral1}+\uppercase\expandafter{\romannumeral2},
\end{equation}
where
\begin{equation*}
	\begin{aligned}
		\uppercase\expandafter{\romannumeral1}\leq&C\left\|\int_0^{t_n}F_h(t_n-s,t_m)f_h(s)ds\right.-\left.\frac{1}{2\pi\mathbf{i} }\int_{\zeta=|\xi_\tau|}\zeta^{-n-1}\delta_\tau(\zeta)^{\alpha-1}\left (\delta_\tau(\zeta)^{\alpha}+A_h(t_m)\right )^{-1}\sum_{j=1}^{\infty}f_h^j\zeta^jd\zeta\right\|_{L^2(\Omega)},\\
		\uppercase\expandafter{\romannumeral2}\leq&C\left\|\int_0^{t_n}F_h(t_n-s,t_m)(A_h(t_m)-A_h(s))\,_0D^{1-\alpha}_sW_h(s)ds\right.\\
		&-\left.\frac{1}{2\pi\mathbf{i} }\int_{\zeta=|\xi_\tau|}\zeta^{-n-1}\delta_\tau(\zeta)^{\alpha-1}\left (\delta_\tau(\zeta)^{\alpha}+A_h(t_m)\right )^{-1}\left (\sum_{j=1}^{\infty}(A_h(t_m)-A_h(t_j))\sum_{i=0}^{j-1}d^{1-\alpha}_{i}W^{j-i}_h\zeta^j\right )d\zeta\right\|_{L^2(\Omega)}.
	\end{aligned}
\end{equation*}

Like the proof in \cite{Jin2016,Lubich1996}, one can get  the following estimates of  $\uppercase\expandafter{\romannumeral1}$ and $\uppercase\expandafter{\romannumeral2}$ defined in \eqref{errordd}.
\begin{theorem}\label{thmimhomI}
		If $f(0)\in L^2(\Omega)$ and $\int_{0}^{t}\|f'(s)\|_{L^2(\Omega)}ds<\infty$, then there holds
	\begin{equation*}
		\begin{aligned}
		&\uppercase\expandafter{\romannumeral1}\leq C\tau \|f(0)\|_{L^2(\Omega)}+C\tau\int_0^t\|f'(s)\|_{L^2(\Omega)}ds.
		\end{aligned}
	\end{equation*}
\end{theorem}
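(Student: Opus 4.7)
The plan is to follow the now-standard convolution-quadrature error analysis of Lubich and Jin--Lazarov--Zhou, adapted to the present variable-coefficient setting. Since $A_h(t_m)$ is a fixed (time-independent) operator inside $\uppercase\expandafter{\romannumeral1}$, both the continuous and discrete convolutions can be represented as contour integrals involving the same resolvent $(z^{\alpha}+A_h(t_m))^{-1}$; the error is then controlled by quantifying how far the discrete symbol $\delta_\tau(e^{-z\tau})^{\alpha-1}$ deviates from its continuous counterpart $z^{\alpha-1}$ on a suitable contour.

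First I would write $f(s) = f(0) + \int_0^s f'(r)\,dr$ and split $\uppercase\expandafter{\romannumeral1}$ into two pieces, one driven by the constant value $f(0)$ and one by $f'$. For each piece I would express the generating function $\sum_{j\ge 1} f_h^j \zeta^j$ in terms of $f_h(0)$ and the samples of $f_h'$ via summation by parts, so that both the continuous and the discrete contributions take the form of a Laplace/$\mathcal{Z}$-transform of the source multiplied by a convolution kernel; this also provides the decay near $z=0$ that will be needed for integrability.

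Next I would change variables $\zeta = e^{-z\tau}$ in the discrete contour integral, deform it from $\{|\zeta|=|\xi_\tau|\}$ to $\Gamma^\tau_{\theta,\kappa}$ using the analyticity assertions of Lemma \ref{Lemseriesest}, and simultaneously truncate the continuous contour $\Gamma_{\theta,\kappa}$ of \eqref{equdefFh} to the same set, incurring only an exponentially small error from the truncated tails. With both integrals on $\Gamma^\tau_{\theta,\kappa}$, the difference of integrands splits into the kernel difference
\[
\delta_\tau(e^{-z\tau})^{\alpha-1}\bigl(\delta_\tau(e^{-z\tau})^{\alpha}+A_h(t_m)\bigr)^{-1}-z^{\alpha-1}\bigl(z^{\alpha}+A_h(t_m)\bigr)^{-1}
\]
plus the error between the exact Laplace transform $\widehat{f_h}(z)$ and the partial sum $\tau\sum_j f_h^j e^{-z t_j}$. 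Applying the resolvent identity $B^{-1}-C^{-1}=B^{-1}(C-B)C^{-1}$ together with the resolvent bound $\|(z^{\alpha}+A_h(t_m))^{-1}\|\le C|z|^{-\alpha}$ from Lemma \ref{lemestEFh} and the estimates $|\delta_\tau(e^{-z\tau})^{\alpha}-z^{\alpha}|\le C\tau|z|^{\alpha+1}$ and $|\delta_\tau(e^{-z\tau})-z|\le C\tau|z|^2$ of Lemma \ref{Lemseriesest}, the operator difference is $O(\tau|z|^{\alpha-1})$ in norm. Integrating the resulting bound against $|e^{zt_n}|$ along $\Gamma^\tau_{\theta,\kappa}$, one gains a factor of $\tau$ together with the factors $\|f(0)\|_{L^2(\Omega)}$ and $\int_0^{t_n}\|f'(s)\|_{L^2(\Omega)}\,ds$ that come from the Laplace-transform step.

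The main obstacle will be the bookkeeping near $|z|=\kappa$ when one combines the kernel error with the quadrature error of the source term: the two errors interact at different rates in $|z|$ and one must verify that the $|z|$-integrations remain finite at the origin. This is precisely where the splitting $f=f(0)+\int f'$ becomes essential, since it provides a factor of $|z|^{-1}$ in $\widehat{f_h}$ that tames the $|z|^{\alpha-1}$ singularity of the kernel difference and leaves an integrable expression. Once this integrability is secured, summing the two contributions gives the advertised bound $C\tau\|f(0)\|_{L^2(\Omega)}+C\tau\int_0^{t_n}\|f'(s)\|_{L^2(\Omega)}\,ds$.
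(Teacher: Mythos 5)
Your outline is precisely the standard Lubich/Jin--Lazarov--Zhou convolution-quadrature error analysis, which is what the paper itself relies on: it gives no proof of Theorem \ref{thmimhomI} and simply appeals to the arguments of \cite{Jin2016,Lubich1996}, so your route coincides with the intended one (split $f=f(0)+1\ast f'$, pass to $\zeta=e^{-z\tau}$, deform both contours to $\Gamma^{\tau}_{\theta,\kappa}$, and compare symbols via the resolvent identity and Lemma \ref{Lemseriesest}). One small correction to your bookkeeping: for the kernel $z^{\alpha-1}(z^{\alpha}+A_h(t_m))^{-1}$ the symbol difference is $O(\tau)$ uniformly on $\Gamma^{\tau}_{\theta,\kappa}$ (both terms of the resolvent-identity splitting contribute $C\tau$, using $|\delta_\tau(e^{-z\tau})^{\alpha-1}-z^{\alpha-1}|\leq C\tau|z|^{\alpha}$ and $\|(z^{\alpha}+A_h(t_m))^{-1}\|\leq C|z|^{-\alpha}$), not $O(\tau|z|^{\alpha-1})$; the extra factor $|z|^{-1}$ coming from the source representation then gives $\int_{\Gamma^{\tau}_{\theta,\kappa}}|e^{zt_n}|\,\tau|z|^{-1}|dz|\leq C\tau$ with $\kappa\sim 1/t_n$, which is exactly what closes the argument.
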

As for $\uppercase\expandafter{\romannumeral2}$, we introduce
\begin{equation*}
	\tau\sum_{i=0}^{\infty}F^{i}_{\tau,m}\zeta^i=\delta_\tau(\zeta)^{\alpha-1}\left (\delta_\tau(\zeta)^{\alpha}+A_h(t_m)\right )^{-1},
\end{equation*}
where
\begin{equation*}
	F^n_{\tau,m}=\frac{1}{2\pi\tau \mathbf{i}}\int_{\zeta=|\xi_\tau|}\zeta^{-n-1}\delta_\tau(\zeta)^{\alpha-1}\left (\delta_\tau(\zeta)^{\alpha}+A_h(t_m)\right )^{-1}d\zeta
\end{equation*}
and $\xi_\tau=e^{-\tau(\kappa+1)}$. Taking $\zeta=e^{-z\tau}$ and deforming the contour $\Gamma^\tau=\{z=\kappa+1+\mathbf{i}y:y\in\mathbb{R}~{\rm and}~|y|\leq \pi/\tau\}$ to
$\Gamma^\tau_{\theta,\kappa}$, one has 
\begin{equation*}
F^n_{\tau,m}=\frac{1}{2\pi \mathbf{i}}\int_{\Gamma^\tau_{\theta,\kappa}}e^{zn\tau}\delta_\tau(e^{-z\tau})^{\alpha-1}\left (\delta_\tau(e^{-z\tau})^{\alpha}+A_h(t_m)\right )^{-1}dz,
\end{equation*}
and simple calculations lead to
\begin{equation}\label{eqFntmest1}
	\left \|A_h(t_m)F^n_{\tau,m}\right \|=\left \|\frac{1}{2\pi \mathbf{i}}\int_{\Gamma^\tau_{\theta,\kappa}}e^{zn\tau}A_h(t_m)\delta_\tau(e^{-z\tau})^{\alpha-1}\left (\delta_\tau(e^{-z\tau} )^{\alpha}+A_h(t_m)\right )^{-1}dz\right \|\leq C(t_m+\tau)^{-\alpha}.
\end{equation}
To get the estimates of $\uppercase\expandafter{\romannumeral2}$, we divide it into four parts, i.e.,
\begin{equation}\label{equimII}
	\begin{aligned}
		\uppercase\expandafter{\romannumeral2}\leq&C\left\|\int_0^{t_n}F_h(t_n-s,t_m)(A_h(t_m)-A_h(s))\,_0D^{1-\alpha}_sW_h(s)ds\right.\\
		&-\left.\tau\sum_{k=1}^{n}F^{n-k}_{\tau,m}\left ((A_h(t_m)-A_h(t_k))\sum_{i=0}^{k-1}d^{1-\alpha}_{i}W^{k-i}_h\right )\right\|_{L^2(\Omega)}\leq\sum_{k=1}^n(\uppercase\expandafter{\romannumeral2}_{1,k}+\uppercase\expandafter{\romannumeral2}_{2,k}+\uppercase\expandafter{\romannumeral2}_{3,k}+\uppercase\expandafter{\romannumeral2}_{4,k}),
	\end{aligned}
\end{equation}
where
\begin{equation*}
	\begin{aligned}
	\uppercase\expandafter{\romannumeral2}_{1,k}\leq&C\left\|\tau F^{n-k}_{\tau,m}(A_h(t_m)-A_h(t_k))\left (\sum_{i=0}^{k-1}d^{1-\alpha}_{i}W^{k-i}_h-\,_0D^{1-\alpha}_tW_h(t_{k})\right )\right\|_{L^2(\Omega)},\\
	\uppercase\expandafter{\romannumeral2}_{2,k}\leq&C\left\|\left (\int_{t_{k-1}}^{t_k}F_h(t_n-s,t_m)ds-\tau F^{n-k}_{\tau,m}\right)\left ((A_h(t_m)-A_h(t_k))\,_0D^{1-\alpha}_tW_h(t_{k})\right )\right\|_{L^2(\Omega)},\\
	\uppercase\expandafter{\romannumeral2}_{3,k}\leq&C\bigg\|\int_{t_{k-1}}^{t_k}F_h(t_n-s,t_m)\left ((A_h(t_m)-A_h(s))-(A_h(t_m)-A_h(t_k))\right )\,_0D^{1-\alpha}_tW_h(t_{k})ds\bigg\|_{L^2(\Omega)},\\
	\uppercase\expandafter{\romannumeral2}_{4,k}\leq&C\bigg\|\int_{t_{k-1}}^{t_k}F_h(t_n-s,t_m)(A_h(t_m)-A_h(s))\left (\,_0D^{1-\alpha}_tW_h(s)-\,_0D^{1-\alpha}_tW_h(t_{k})\right )ds\bigg\|_{L^2(\Omega)}.\\
	\end{aligned}
\end{equation*}
To get error estimates of $\uppercase\expandafter{\romannumeral2}$, the following estimates of $G_h$ defined in \eqref{eqrepsGh} are also needed. Similar to Theorem \ref{thmregofG}, we have the following results.
\begin{theorem}\label{thmregofGh}
	If $W_0\in \dot{H}^{\epsilon}(\Omega)$, $f(0)\in L^2(\Omega)$, and $\int_{0}^{t}\|f'(s)\|_{L^2(\Omega)}ds<\infty$, then $G_h(t)$ satisfies
	\begin{equation*}
	\|G_h(t)\|_{L^2(\Omega)}\leq Ct^{\alpha-1}\|W_0\|_{L^2(\Omega)}+C\|f(0)\|_{L^2(\Omega)}+C\int_{0}^{t}\|f'(s)\|_{L^2(\Omega)}ds
	\end{equation*}
	and
	\begin{equation*}
	\|G_h(t)\|_{\dot{H}^2(\Omega)}\leq Ct^{\epsilon\alpha/2-1}\|W_0\|_{\dot{H}^{\epsilon}(\Omega)}+C\|f(0)\|_{L^2(\Omega)}+C\int_{0}^{t}\|f'(s)\|_{L^2(\Omega)}ds.
	\end{equation*}
\end{theorem}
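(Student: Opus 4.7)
\emph{Proof plan.} I would mirror the argument of Theorem~\ref{thmregofG} in the finite-element setting, using the representation \eqref{eqrepsGh} together with Lemma~\ref{lemestEFh} as the discrete analog of Lemma~\ref{lemestEF}, and the discrete counterpart of \eqref{eqassumae}
\begin{equation*}
\|(A_h(t)-A_h(s))v_h\|_{L^2(\Omega)}\leq C|t-s|\,\|v_h\|_{\dot{H}^2(\Omega)},\quad v_h\in X_h,
\end{equation*}
which is immediate from \eqref{eqassuma} since $A_h(t)=\tfrac{1}{a^2(t)}A_h$. I also employ the stability of $W_{0,h}$ in the (discrete) $\dot{H}^\epsilon$-norm, obtained by interpolating between the $L^2$ and $\dot{H}^2$ stability of $P_h$ (for $W_0\in L^2$) and $R_h$ (for $W_0\in\dot{H}^2$) provided by Lemma~\ref{lemprojection}, together with the decomposition $f_h(s)=f_h(0)+\int_0^s f_h'(r)\,dr$.

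For the $\dot{H}^2$-bound, I apply $A_h(t_0)$ to both sides of \eqref{eqrepsGh} and take the $L^2$-norm, producing three terms. The initial-data term is handled by the factorization
\begin{equation*}
A_h(t_0)E_h(t_0,t_0)W_{0,h}=\tfrac{1}{a^2(t_0)}A_h^{1-\epsilon/2}E_h(t_0,t_0)\cdot A_h^{\epsilon/2}W_{0,h},
\end{equation*}
combined with the bound $\|A_h^{1-\epsilon/2}E_h(t_0,t_0)\|\leq Ct_0^{\alpha\epsilon/2-1}$ of Lemma~\ref{lemestEFh}, yielding $Ct_0^{\alpha\epsilon/2-1}\|W_0\|_{\dot{H}^\epsilon(\Omega)}$. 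The source term is treated by splitting $f_h(s)=f_h(0)+\int_0^s f_h'(r)\,dr$, exchanging integration order, and using the identity
\begin{equation*}
A_h(t_0)\int_0^\sigma E_h(u,t_0)\,du=\mathbf{I}-F_h(\sigma,t_0),
\end{equation*}
which follows from $\partial_u F_h(u,t_0)=-A_h(t_0)E_h(u,t_0)$ for $u>0$, together with $\|F_h(\sigma,t_0)\|\leq C$ from Lemma~\ref{lemestEFh}; this gives $C\|f(0)\|_{L^2(\Omega)}+C\int_0^{t_0}\|f'(s)\|_{L^2(\Omega)}\,ds$. For the memory term, the discrete variation inequality cancels the singularity $(t_0-s)^{-1}$ arising from $\|A_h E_h(t_0-s,t_0)\|\leq C(t_0-s)^{-1}$, producing $C\int_0^{t_0}\|G_h(s)\|_{\dot{H}^2(\Omega)}\,ds$. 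Setting $t_0=t$ and applying Grönwall's inequality then delivers the $\dot{H}^2$-bound.

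For the $L^2$-bound I do not apply $A_h(t_0)$; instead, I use that $A_h$ commutes with $E_h(t,t_0)$ (visible from \eqref{equdefEh}) to rewrite the memory integrand as $\bigl(\tfrac{1}{a^2(t_0)}-\tfrac{1}{a^2(s)}\bigr)E_h(t_0-s,t_0)A_hG_h(s)$, so that $\|A_hE_h(t_0-s,t_0)\|\leq C(t_0-s)^{-1}$ combines with the factor $|t_0-s|$ from \eqref{eqassuma} to give an integrand bounded by $C\|G_h(s)\|_{L^2(\Omega)}$, closing the Grönwall argument without recourse to the $\dot{H}^2$-bound. The principal technical obstacle is the initial-data term in the $\dot{H}^2$-bound, where the weak regularity $W_0\in\dot{H}^\epsilon$ forces a careful use of the $A_h^{1-\beta}$-smoothing estimate of Lemma~\ref{lemestEFh} coupled with the interpolation stability of the projections to generate the $t_0^{\alpha\epsilon/2-1}$ singularity with the correct exponent; the uniformity of all constants in $h$ is automatic precisely because Lemma~\ref{lemestEFh} reproduces the continuous resolvent bounds.
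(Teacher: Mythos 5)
Your proposal is correct and follows essentially the same route as the paper, which proves this result only implicitly by declaring it ``similar to Theorem~\ref{thmregofG}'': you apply $A_h(t_0)$ to \eqref{eqrepsGh}, invoke Lemma~\ref{lemestEFh} and the discrete form of \eqref{eqassumae} so the factor $|t_0-s|$ cancels the $(t_0-s)^{-1}$ singularity, and close with Gr\"{o}nwall, exactly as in the continuous case. Your only deviation is cosmetic: closing the $L^2$-bound via the commutation of $A_h$ with $E_h(\cdot,t_0)$ rather than by feeding the already-established $\dot{H}^2$-bound into the memory term; both work, and the rest (the $A_h^{1-\epsilon/2}E_h$ smoothing for the initial data, the identity $A_h(t_0)\int_0^\sigma E_h(u,t_0)\,du=\mathbf{I}-F_h(\sigma,t_0)$ for the source term) matches the paper's argument for Theorem~\ref{thmregofG}.
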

\begin{theorem}\label{thmHolderG}
	Let $G_h(t)=\!_0D^{1-\alpha}_tW_h(t)$. Assume $W_0=0$, $f(0)\in L^2(\Omega)$, and $f'(s)\in L^{\infty}(0,T,L^2(\Omega))$. There holds
	\begin{equation*}
	\begin{aligned}
	\left\|\frac{G_h(t)-G_h(t-\tau)}{\tau^{\gamma}}\right\|_{L^2(\Omega)}\leq& Ct^{\alpha-\gamma}\left (\|f(0)\|_{L^2(\Omega)}+\|f'(s)\|_{L^{\infty}(0,T,L^2(\Omega))}\right ),
	\end{aligned}
	\end{equation*}
	where $\gamma<1+\alpha$.
\end{theorem}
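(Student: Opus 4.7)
The plan is to work from the integral representation \eqref{eqrepsGh} with $W_0=0$ and the free parameter $t_0$ taken equal to $t$ in both $G_h(t)$ and $G_h(t-\tau)$, so that both expressions involve the same operator family $E_h(\cdot,t)$. Subtracting yields
\[
G_h(t)-G_h(t-\tau) = J_1 + J_2 + J_3 + J_4,
\]
where $J_1$ and $J_3$ integrate $E_h(t-s,t)f_h(s)$ and $E_h(t-s,t)(A_h(t)-A_h(s))G_h(s)$ respectively over the short interval $[t-\tau,t]$, while $J_2$ and $J_4$ are the corresponding integrals over $[0,t-\tau]$ with $E_h(t-s,t)$ replaced by the difference $E_h(t-s,t)-E_h(t-\tau-s,t)$.

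For $J_1$, Lemma \ref{lemestEFh} together with $\|f_h(s)\|_{L^2(\Omega)}\leq\|f(0)\|_{L^2(\Omega)}+T\|f'\|_{L^\infty(0,T;L^2(\Omega))}$ gives $\|J_1\|_{L^2(\Omega)}\leq C\tau^{\alpha}(\|f(0)\|_{L^2(\Omega)}+\|f'\|_{L^\infty(0,T;L^2(\Omega))})$. For $J_3$, I combine the extra factor $|t-s|\leq\tau$ supplied by \eqref{eqassumae}, Lemma \ref{lemestEFh}, and the $\dot H^2$-regularity of $G_h$ from Theorem \ref{thmregofGh} (which under $W_0=0$ reduces to $\|G_h(s)\|_{\dot H^2(\Omega)}\leq C(\|f(0)\|_{L^2(\Omega)}+\|f'\|_{L^\infty(0,T;L^2(\Omega))})$), yielding the higher-order estimate $\|J_3\|_{L^2(\Omega)}\leq C\tau^{\alpha+1}(\|f(0)\|_{L^2(\Omega)}+\|f'\|_{L^\infty(0,T;L^2(\Omega))})$.

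For the difference pieces $J_2$ and $J_4$, the key tool is the contour representation
\[
E_h(u,t)-E_h(u-\tau,t) = \frac{1}{2\pi\mathbf{i}}\int_{\Gamma_{\theta,\kappa}} e^{z(u-\tau)}(e^{z\tau}-1)(z^\alpha+A_h(t))^{-1}dz,
\]
together with the elementary bound $|e^{z\tau}-1|\leq C\min(1,|z|\tau)$. Interpolating gives $|e^{z\tau}-1|\leq C(|z|\tau)^\gamma$ for $\gamma\in[0,1]$, and substituting then deforming the contour yields $\|E_h(u,t)-E_h(u-\tau,t)\|\leq C\tau^\gamma(u-\tau)^{\alpha-1-\gamma}$. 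Integrating this bound against $f_h$ and against $(A_h(t)-A_h(s))G_h(s)$ over $[0,t-\tau]$ and again invoking \eqref{eqassumae} and Theorem \ref{thmregofGh} produces $\|J_2\|_{L^2(\Omega)}+\|J_4\|_{L^2(\Omega)}\leq C\tau^\gamma t^{\alpha-\gamma}(\|f(0)\|_{L^2(\Omega)}+\|f'\|_{L^\infty(0,T;L^2(\Omega))})$.

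The main obstacle is reaching the full range $\gamma<1+\alpha$ rather than the range $\gamma\in[0,1]$ afforded directly by the interpolation on $|e^{z\tau}-1|$. For $\gamma\in[1,1+\alpha)$ the missing power must be extracted from the smoothing present in $J_3$ and $J_4$: writing $(z^\alpha+A_h(t))^{-1} = z^{-\alpha}(\mathbf{I} - A_h(t)(z^\alpha+A_h(t))^{-1})$ trades one power of $z$ for a factor of $A_h(t)$, which is then absorbed by the $\dot H^2$-regularity of $G_h$ in combination with the extra factor $|t-s|$ from $(A_h(t)-A_h(s))$. Collecting all four estimates and dividing by $\tau^\gamma$ yields the stated bound.
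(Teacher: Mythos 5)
Your decomposition into short-interval pieces ($J_1$, $J_3$) and difference pieces ($J_2$, $J_4$) is structurally the same as the paper's, and your treatment of $J_3$, $J_4$ and the contour bound $|e^{z\tau}-1|\leq C(|z|\tau)^{\gamma}$ mirrors the paper's $\upsilon_2$ and $\upsilon_{1,2}$. However, there is a genuine gap in the source part. Your bound $\|J_1\|_{L^2(\Omega)}\leq C\tau^{\alpha}$ gives, after division by $\tau^{\gamma}$, only $C\tau^{\alpha-\gamma}$, which is \emph{not} dominated by $Ct^{\alpha-\gamma}$ when $\gamma>\alpha$: since $\tau\leq t$ and $\alpha-\gamma<0$, one has $\tau^{\alpha-\gamma}\geq t^{\alpha-\gamma}$, and $\tau^{\alpha-\gamma}\to\infty$ as $\tau\to0$ at fixed $t$. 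The range $\gamma>\alpha$ is exactly the one needed downstream (Lemma \ref{lemerrorGhGnh} and Theorem \ref{thmimhomII} invoke this theorem with $\gamma=1>\alpha$), so this is not a borderline issue. The paper avoids it by first writing $f(s)=f(0)+(1\ast f')(s)$ and shifting one integration onto the kernel, so that the source contribution is carried by $\int_0^{t-s}E_h(r,t_0)dr$, which is $O((t-s)^{\alpha})$ rather than $O((t-s)^{\alpha-1})$; the short-interval piece then acts only on $f'$ and is $O(\tau^{1+\alpha-\gamma})\leq C(T)\,t^{\alpha-\gamma}$ for $\gamma<1+\alpha$, while the $f(0)$ contribution becomes a global difference handled on the contour with the extra factor $z^{-1}$. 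You need this (or some equivalent gain of one order of temporal smoothing in the $f$-convolution) to reach any $\gamma>\alpha$.

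A second, related weakness: your device for pushing past $\gamma=1$ --- trading a power of $z$ for $A_h(t)$ via the resolvent identity and absorbing it with the $\dot H^2$-regularity of $G_h$ and the factor $|t-s|$ --- is only available for $J_3$ and $J_4$, where the integrand carries $(A_h(t)-A_h(s))G_h(s)$. It says nothing about $J_2$, the difference piece of the $f$-convolution, for which the interpolation $|e^{z\tau}-1|\leq C(|z|\tau)^{\gamma}$ genuinely fails for $\gamma>1$ in the region $|z|\tau\ll1$. Again the cure in the paper is the rewrite $f=f(0)+1\ast f'$, which supplies the extra $z^{-1}$ so that the contour integrand becomes $|z|^{\gamma-\alpha-1}$ and the remaining time integral $\int_0^{t-\tau}(t-s)^{\alpha-\gamma}\|f'(s)\|_{L^2(\Omega)}ds$ converges precisely when $\gamma<1+\alpha$ --- which is where the stated restriction on $\gamma$ actually comes from.
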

\begin{proof}
	According to \eqref{eqrepsGh}, one has
	\begin{equation*}
	\begin{aligned}
	\left\|\frac{G_h(t)-G_h(t-\tau)}{\tau^{\gamma}}\right\|_{L^2(\Omega)}\leq&\upsilon_1+\upsilon_2,
	\end{aligned}
	\end{equation*}
	where
	\begin{equation*}
	\begin{aligned}
	\upsilon_1=&\left\|\frac{\int_0^tE_h(t-s,t_0)f_h(s)ds-\int_0^{t-\tau}E_h(t-\tau-s,t_0)f_h(s)ds}{\tau^\gamma}\right\|_{L^2(\Omega)},\\
	\upsilon_2=&\left\|\frac{\int_0^tE_h(t-s,t_0)(A_h(t_0)-A_h(s))G_h(s)ds-\int_0^{t-\tau}E_h(t-\tau-s,t_0)(A_h(t_0)-A_h(s))G_h(s)ds}{\tau^\gamma}\right\|_{L^2(\Omega)}.\\
	\end{aligned}
	\end{equation*}
	As for $\upsilon_1$, we split it into	
	\begin{equation*}
	\begin{aligned}
	\upsilon_1\leq& C\left\|\frac{\int_{0}^{t}E_h(t-s,t_0)dsf_h(0)-\int_{0}^{t-\tau}E_h(t-\tau-s,t_0)dsf_h(0)}{\tau^\gamma}\right\|_{L^2(\Omega)}\\
	&+C\left\|\frac{\int_{0}^{t-\tau}\left (\int_0^{t-s}E_h(r,t_0)dr-\int_0^{t-\tau-s}E_h(r,t_0)dr\right )f_h'(s)ds}{\tau^\gamma}\right\|_{L^2(\Omega)}\\
	&+C\left\|\frac{\int_{t-\tau}^{t}\int_0^{t-s}E_h(r,t_0)drf_h'(s)ds}{\tau^\gamma}\right\|_{L^2(\Omega)}\leq \upsilon_{1,1}+\upsilon_{1,2}+\upsilon_{1,3}.
	\end{aligned}
	\end{equation*}
	Using the fact $\left |\frac{1-e^{-z\tau}}{\tau^\gamma}\right |\leq C|z|^{\gamma}$ on $\Gamma_{\theta,\kappa}$,
	there is
	\begin{equation*}
		\begin{aligned}
		\upsilon_{1,1}\leq& C\left\|\int_{\Gamma_{\theta,\kappa}}e^{z(t-s)}\frac{1-e^{-z\tau}}{\tau^{\gamma}}(z^\alpha+A_h(t_0))^{-1}z^{-1}dzf_h(0)\right\|_{L^2(\Omega)}\\
		\leq&C\int_{\Gamma_{\theta,\kappa}}|e^{z(t-s)}||z|^{\gamma-\alpha-1}|dz|\|f_h(0)\|_{L^2(\Omega)}\leq Ct^{\alpha-\gamma}\|f(0)\|_{L^2(\Omega)}.
		\end{aligned}
	\end{equation*}
	Similarly, we can bound $\upsilon_{1,2}$ by
	\begin{equation*}
	\begin{aligned}
	\upsilon_{1,2}\leq& C\left\|\int_{0}^{t-\tau}\int_{\Gamma_{\theta,\kappa}}e^{z(t-s)}\frac{1-e^{-z\tau}}{\tau^{\gamma}}(z^\alpha+A_h(t_0))^{-1}z^{-1}dzf_h'(s)ds\right\|_{L^2(\Omega)}\\
	\leq
	&C\int_{0}^{t-\tau}\int_{\Gamma_{\theta,\kappa}}|e^{z(t-s)}||z|^{\gamma-\alpha-1}|dz|\|f_h'(s)\|_{L^2(\Omega)}ds
	\leq C\int_{0}^{t-\tau}(t-s)^{\alpha-\gamma}\|f'(s)\|_{L^2(\Omega)}ds,
	\end{aligned}
	\end{equation*}
where $\gamma<1+\alpha$ is required to ensure $\upsilon_{1,2}$ convergent. Similarly one has
	\begin{equation*}
	\begin{aligned}
	\upsilon_{1,3}\leq& C\left\|\int_{t-\tau}^{t}\int_{\Gamma_{\theta,\kappa}}e^{z(t-s)}\tau^{-\gamma}(z^\alpha+A_h(t_0))^{-1}z^{-1}dzf_h'(s)ds\right\|_{L^2(\Omega)}\\
	\leq& C\left\|\int_{\Gamma_{\theta,\kappa}}e^{z\tau}\frac{1-e^{-z\tau}}{z\tau^{\gamma}}(z^\alpha+A_h(t_0))^{-1}z^{-1}dz\right\|_{L^2(\Omega)}\|f_h'(s)\|_{L^{\infty}(0,T,L^2(\Omega))}\\
	\leq
	&C\int_{\Gamma_{\theta,\kappa}}|z|^{\gamma-\alpha-2}|dz|\|f_h'(s)\|_{L^{\infty}(0,T,L^2(\Omega))}
	\leq Ct^{1+\alpha-\gamma}\|f'(s)\|_{L^{\infty}(0,T,L^2(\Omega))},
	\end{aligned}
	\end{equation*}
	where we take $\kappa=1/t$ and require $\gamma<1+\alpha$ to ensure $\upsilon_{1,3}$ convergent. Thus there exist
	\begin{equation*}
	\upsilon_1\leq Ct^{\alpha-\gamma}\left (\|f(0)\|_{L^2(\Omega)}+\|f'(s)\|_{L^{\infty}(0,T,L^2(\Omega))}\right )
	\end{equation*}
	and $\gamma<1+\alpha$. Similarly, when $t=t_0$, one can split $\upsilon_{2}$ into
	\begin{equation*}
		\begin{aligned}
		\upsilon_2
		\leq&\left\|\frac{\int_0^{t_0-\tau}\left (E_h(t_0-s,t_0)-E_h(t_0-\tau-s,t_0)\right )(A_h(t_0)-A_h(s))G_h(s)ds}{\tau^\gamma}\right\|_{L^2(\Omega)}\\
		&+\left\|\frac{\int_{t_0-\tau}^{t_0}E_h(t_0-s,t_0)(A_h(t_0)-A_h(s))G_h(s)ds}{\tau^\gamma}\right\|_{L^2(\Omega)}\leq \upsilon_{2,1}+\upsilon_{2,2}.
		\end{aligned}
	\end{equation*}
	Using Lemma \ref{lemestEFh} and assumption \eqref{eqassumae}, one has
	\begin{equation*}
		\begin{aligned}
			\upsilon_{2,1}\leq& \left\|\int_0^{t_0-\tau}\int_{\Gamma_{\theta,\kappa}}e^{z(t_0-s)}\frac{1-e^{-z\tau}}{\tau^\gamma}(z^{\alpha}+A_h(t_0))^{-1}dz(A_h(t_0)-A_h(s))G_h(s)ds\right\|_{L^2(\Omega)}\\
			\leq& \int_0^{t_0-\tau}\int_{\Gamma_{\theta,\kappa}}|e^{z(t_0-s)}||z|^{\gamma-\alpha}|dz|\|(A_h(t_0)-A_h(s))G_h(s)\|_{L^2(\Omega)}ds
			\leq\int_0^{t_0-\tau}(t_0-s)^{\alpha-\gamma}\|G_h(s)\|_{\dot{H}^2(\Omega)}ds.
		\end{aligned}
	\end{equation*}
	Similarly, when $\gamma\leq 1$, there exists
	\begin{equation*}
		\upsilon_{2,2}\leq C\left\|\frac{\int_{t_0-\tau}^{t_0}E_h(t_0-s,t_0)ds}{\tau^{\gamma-1}}\right\|\|G_h(s)\|_{L^\infty(0,T,\dot{H}^2(\Omega))}\leq Ct_0^{1+\alpha-\gamma}\|G_h(s)\|_{L^\infty(0,T,\dot{H}^2(\Omega))};
	\end{equation*}
	when $\gamma>1$,
	\begin{equation*}
		\begin{aligned}
			\upsilon_{2,2}\leq& C\left\|\frac{\int_{t_0-\tau}^{t_0}E_h(t_0-s,t_0)ds}{\tau^{\gamma-1}}\right\|\|G_h(s)\|_{L^\infty(0,T,\dot{H}^2(\Omega))}\\
			\leq& C\left\|\frac{\int_{t_0-\tau}^{t_0}\int_{\Gamma_{\theta,\kappa}}e^{z(t_0-s)}(z^{\alpha}+A_h(t_0))^{-1}dzds}{\tau^{\gamma-1}}\right\|\|G_h(s)\|_{L^\infty(0,T,\dot{H}^2(\Omega))}\\
			\leq&C \left\|\int_{\Gamma_{\theta,\kappa}}\frac{1-e^{z\tau}}{z\tau^{\gamma-1}}(z^{\alpha}+A_h(t_0))^{-1}dz\right\|\|G_h(s)\|_{L^\infty(0,T,\dot{H}^2(\Omega))}\\
			\leq&C \int_{\Gamma_{\theta,\kappa}}|z|^{\gamma-2-\alpha}|dz|\|G_h(s)\|_{L^\infty(0,T,\dot{H}^2(\Omega))}\leq Ct_0^{1+\alpha-\gamma}\|G_h(s)\|_{L^\infty(0,T,\dot{H}^2(\Omega))},
		\end{aligned}
	\end{equation*}
	where we take $\kappa=1/t_0$ and require $\gamma<1+\alpha$ for the integral $\int_{\Gamma_{\theta,\kappa}}|z|^{\gamma-2-\alpha}|dz|$ to be convergent. Taking $t_0=t$ and using Theorem \ref{thmregofGh}, the desired results can be obtained by the fact $T/t>1$.
\end{proof}

Now we estimate $\uppercase\expandafter{\romannumeral2}$. First, we consider $\uppercase\expandafter{\romannumeral2}_{1,k}$ defined in \eqref{equimII}, which implies the difference between   $\sum_{i=0}^{k-1}d^{1-\alpha}_{i}W^{k-i}_h$ and $\,_0D^{1-\alpha}_tW_h(t_{k})$ needs to be obtained.
\begin{lemma}\label{lemerrorGhGnh}
	Let $G_h(t)=\!_0D^{1-\alpha}_tW_h(t)$ and $G^n_h=\sum_{i=0}^{n-1}d^{1-\alpha}_iW^{n-i}_h$. Assume $f(0)\in L^2(\Omega)$ and $f'(s)\in L^{\infty}(0,T,L^2(\Omega))$. There exists
	\begin{equation*}
	\|G^n_h-G_h(t_n)\|_{L^2(\Omega)}\leq C\tau t_n^{\alpha-1}\left (\|f(0)\|_{L^2(\Omega)}+\|f'(s)\|_{L^{\infty}(0,T,L^2(\Omega))}\right ).
	\end{equation*}
\end{lemma}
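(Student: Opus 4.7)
The plan is to compare $G_h(t_n)$ and $G^n_h$ through parallel Bromwich-type contour integral representations, using the closeness of the discrete symbol $\delta_\tau(e^{-z\tau})^{\alpha}$ to $z^\alpha$ from Lemma \ref{Lemseriesest}. Applying $\delta_\tau(\zeta)^{1-\alpha}$ to the generating-function identity behind \eqref{eqrepsWnh} (with $W^0_h=0$, as assumed throughout this subsection) and using $\delta_\tau(\zeta)^{1-\alpha}\delta_\tau(\zeta)^{\alpha-1}=1$, one obtains the contour representation
\begin{equation*}
G^n_h = \tau\sum_{k=1}^n \widetilde E^{\,n-k}_{\tau,n}f_h(t_k) + \tau\sum_{k=1}^n \widetilde E^{\,n-k}_{\tau,n}(A_h(t_n)-A_h(t_k))G^k_h,
\end{equation*}
where $\widetilde E^{\,\ell}_{\tau,n} := \frac{1}{2\pi\mathbf{i}}\int_{\Gamma^\tau_{\theta,\kappa}} e^{z\ell\tau}(\delta_\tau(e^{-z\tau})^\alpha+A_h(t_n))^{-1}dz$ is the natural discrete counterpart of $E_h(\cdot,t_n)$ from \eqref{equdefEh}. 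Writing $\eta^n := G_h(t_n)-G^n_h$ and taking the freezing parameter $t_0=t_n$ in \eqref{eqrepsGh}, I would split $\eta^n = Q_1(n) + Q_2(n)$ according to whether the error originates from the forcing convolution or from the memory convolution.

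For $Q_1(n)$, the strategy mirrors Theorem \ref{thmimhomI}: decompose $f_h(s) = f_h(0)+(1\ast f_h')(s)$, express both the continuous convolution with $E_h(\cdot,t_n)$ and the discrete convolution with $\tau\widetilde E^{\,\cdot}_{\tau,n}$ as Bromwich integrals, and estimate their difference via the symbol bound $|\delta_\tau(e^{-z\tau})^\alpha - z^\alpha|\le C\tau|z|^{\alpha+1}$ from Lemma \ref{Lemseriesest}. Choosing $\kappa = 1/t_n$ on $\Gamma^\tau_{\theta,\kappa}$ introduces the desired singular weight, producing
\begin{equation*}
\|Q_1(n)\|_{L^2(\Omega)}\le C\tau t_n^{\alpha-1}\bigl(\|f(0)\|_{L^2(\Omega)}+\|f'(s)\|_{L^\infty(0,T;L^2(\Omega))}\bigr).
\end{equation*}

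For $Q_2(n)$, I imitate the four-way decomposition \eqref{equimII}: (i) the quadrature-kernel error of $E_h(t_n-\cdot,t_n)$ on each $[t_{k-1},t_k]$ applied to the frozen data $(A_h(t_n)-A_h(t_k))G_h(t_k)$; (ii) the piecewise-constant-in-$s$ approximation error of $A_h(s)$, controlled via \eqref{eqassumae} together with the $\dot H^2$-estimate of $G_h$ from Theorem \ref{thmregofGh}; (iii) the piecewise-constant-in-$s$ approximation error of $G_h(s)$, controlled via Theorem \ref{thmHolderG} with $\gamma=1$ (admissible since $1<1+\alpha$), which yields $\|G_h(s)-G_h(t_k)\|_{L^2(\Omega)}\le C(t_k-s)\,t_k^{\alpha-1}$ times the data norm; and (iv) the self-referential feedback $\tau\sum_{k=1}^n\widetilde E^{\,n-k}_{\tau,n}(A_h(t_n)-A_h(t_k))\eta^k$. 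Pieces (i)–(iii) each fit into $C(T)\tau t_n^{\alpha-1}$ times the data norm by arguments parallel to Lemma \ref{lemspaestIV}. For (iv), the scalar factorisation $A_h(t_n)-A_h(t_k)=(1-a^2(t_n)/a^2(t_k))A_h(t_n)$ with \eqref{eqassuma}, combined with a Bromwich estimate $\|A_h(t_n)\widetilde E^{\,n-k}_{\tau,n}\|\le C t_{n-k}^{-1}$ analogous to \eqref{eqFntmest1}, reduces (iv) to $C\tau\sum_{k=1}^{n-1}\|\eta^k\|_{L^2(\Omega)}$.

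The main obstacle is closing the argument with a discrete Gronwall inequality while preserving the singular weight $t_n^{\alpha-1}$ on the right-hand side. Combining the pieces above gives
\begin{equation*}
\|\eta^n\|_{L^2(\Omega)}\le C\tau t_n^{\alpha-1}\mathcal{R} + C\tau\sum_{k=1}^{n-1}\|\eta^k\|_{L^2(\Omega)},\qquad \mathcal{R}:=\|f(0)\|_{L^2(\Omega)}+\|f'(s)\|_{L^\infty(0,T;L^2(\Omega))},
\end{equation*}
to which the standard discrete Gronwall lemma (as in \cite{Jin2019,Larsson1992}) applies. Since $\tau\sum_{k=1}^{n-1}t_k^{\alpha-1}\le C t_n^\alpha\le C(T)t_n^{\alpha-1}$, the Gronwall contribution is absorbed into the final constant $C(T)$, yielding the stated estimate.
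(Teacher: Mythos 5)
Your proposal follows essentially the same route as the paper's own proof: the same generating-function identity yielding the discrete operators $E^{n-k}_{\tau,m}$, the same splitting into a forcing part and a memory part, the same four-way decomposition of the memory part (quadrature-kernel error, piecewise-constant error in $A_h(s)$, piecewise-constant error in $G_h(s)$ via Theorem \ref{thmHolderG}, and the self-referential feedback term controlled by the resolvent bound analogous to \eqref{eqestAEn}), closed by the discrete Gr\"{o}nwall inequality. The argument is correct and no further comparison is needed.
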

\begin{proof}
From the definition of $G^n_h$ and Eq. \eqref{eqrepsWnh}, it has
	\begin{equation*}
		\begin{aligned}
			&\sum_{n=1}^{\infty}G^{n}_h\zeta^n=\sum_{n=1}^{\infty}\sum_{i=0}^{n-1}d^{1-\alpha}_iW^{n-i}_h\zeta^n=\delta_\tau(\zeta)^{1-\alpha}\sum_{n=1}^{\infty}W^{n}_h\zeta^n\\
			=&\left (\delta_\tau(\zeta)^{\alpha}+A_h(t_m)\right)^{-1}\sum_{n=1}^{\infty}f_h^n\zeta^n+\left (\delta_\tau(\zeta)^{\alpha}+A_h(t_m)\right)^{-1}\sum_{n=1}^{\infty}(A_h(t_m)-A_h(t_n))G^n_h\zeta^n.
		\end{aligned}
	\end{equation*}
	Considering the error between $G^m_h$ and $G_h(t_m)$, one has
	\begin{equation*}
		\begin{aligned}
		\|G^m_h-G_h(t_m)\|_{L^2(\Omega)}\leq \sum_{k=1}^{2}\upsilon_{k,h}
		\end{aligned}
	\end{equation*}
	where
	\begin{equation*}
		\begin{aligned}
			\upsilon_{1,h}\leq&C\left \|\int_{\zeta=|\xi_\tau|}\zeta^{-m-1}\left (\delta_\tau(\zeta)^{\alpha}+A_h(t_m)\right)^{-1}\sum_{n=1}^{\infty}f_h^n\zeta^nd\zeta-\int_{0}^{t_m}E_h(t_m-s,t_m)f_h(s)ds\right \|_{L^2(\Omega)},\\
			\upsilon_{2,h}\leq&C\left \|\int_{\zeta=|\xi_\tau|}\zeta^{-m-1}\left (\delta_\tau(\zeta)^{\alpha}+A_h(t_m)\right)^{-1}\sum_{n=1}^{\infty}(A_h(t_m)-A_h(t_n))G^n_h\zeta^nd\zeta\right .\\&\left .-\int_{0}^{t_m}E_h(t_m-s,t_m)(A_h(t_m)-A_h(s))G_h(s)ds\right \|_{L^2(\Omega)}\\
		\end{aligned}
	\end{equation*}
	with $\xi_\tau=e^{-\tau(\kappa+1)}$. Similar to the proof in \cite{Jin2016,Lubich1996}, the following estimate of $\upsilon_{1,h}$ can be got
	\begin{equation*}
		\begin{aligned}
			&\upsilon_{1,h}\leq C\tau t_m^{\alpha-1}\|f(0)\|_{L^2(\Omega)
			}+C\tau\int_0^{t_m}(t_m-s)^{\alpha-1}\|f'(s)\|_{L^2(\Omega)}ds.\\
		\end{aligned}
	\end{equation*}
	As for $\upsilon_{2,h}$, we introduce
	\begin{equation*}
	\tau\sum_{i=0}^{\infty}E^{i}_{\tau,m}\zeta^i=\left (\delta_\tau(\zeta)^{\alpha}+A_h(t_m)\right )^{-1},
	\end{equation*}
	where
	\begin{equation*}
	E^n_{\tau,m}=\frac{1}{2\pi \mathbf{i}}\int_{\Gamma^\tau_{\theta,\kappa}}e^{zn\tau}\left (\delta_\tau(e^{-z\tau})^{\alpha}+A_h(t_m)\right )^{-1}dz.
	\end{equation*}
	Thus
	\begin{equation}\label{eqestAEn}
		\|A_h(t_m)E^n_{\tau,m}\|=\left \|\frac{1}{2\pi \mathbf{i}}\int_{\Gamma^\tau_{\theta,\kappa}}e^{zn\tau}A_h(t_m)\left (\delta_\tau(e^{-z\tau} )^{\alpha}+A_h(t_m)\right )^{-1}dz\right \|\leq C(t_m+\tau)^{-1}.
	\end{equation}
	For convenience, we split $\upsilon_{2,h}$ into the following forms
	\begin{equation*}
		\begin{aligned}
			\upsilon_{2,h}\leq&C\left \|\tau\sum_{k=1}^{m}E^{m-k}_{\tau,m}(A_h(t_m)-A_h(t_k))G^k_h-\int_{0}^{t_m}E_h(t_m-s,t_m)(A_h(t_m)-A_h(s))G_h(s)ds\right \|_{L^2(\Omega)}\\
			\leq& C\sum_{k=1}^{m} \left \|\tau E^{m-k}_{\tau,m}(A_h(t_m)-A_h(t_k))(G^k_h-G_h(t_k))\right \|_{L^2(\Omega)}\\
			&+C\sum_{k=1}^{m} \left \|\left (\tau E^{m-k}_{\tau,m}-\int_{t_{k-1}}^{t_k}E_h(t_m-s,t_m)ds\right)(A_h(t_m)-A_h(t_k))G_h(t_k)\right \|_{L^2(\Omega)}\\
			&+C\sum_{k=1}^{m} \left \|\int_{t_{k-1}}^{t_k}E_h(t_m-s,t_m)(A_h(t_k)-A_h(s))dsG_h(t_k)\right \|_{L^2(\Omega)}\\
			&+C\sum_{k=1}^{m} \left \|\int_{t_{k-1}}^{t_k}E_h(t_m-s,t_m)(A_h(t_m)-A_h(s))(G_h(t_k)-G_h(s))ds\right \|_{L^2(\Omega)}=\sum_{k=1}^{m }\sum_{j=1}^{4}\upsilon_{2,j,k,h}.
		\end{aligned}
	\end{equation*}
	Assumption \eqref{eqassuma} and Eq. \eqref{eqestAEn} lead to
	\begin{equation*}
		\begin{aligned}
			\sum_{k=1}^{m}\upsilon_{2,1,k,h}\leq C\sum_{k=1}^{m}\tau\|G^k_h-G_h(t_k)\|_{L^2(\Omega)}.
		\end{aligned}
	\end{equation*}
	As for $\upsilon_{2,2,k,h}$, there is
	\begin{equation*}
		\begin{aligned}
		&\left \|\tau E^{m-k}_{\tau,m
		}-\int_{t_{k-1}}^{t_k}E_h(t_m-s,t_m)ds\right \|\leq \left \| \int_{t_{k-1}}^{t_k}E^{m-k}_{\tau,m
	}-E_h(t_m-s,t_m)ds\right \|\\
\leq&C\left \| \int_{t_{k-1}}^{t_k}\int_{\Gamma^\tau_{\theta,\kappa}}e^{z(m-k)\tau}\left (\delta_\tau(e^{-z\tau})^{\alpha}+A_h(t_m)\right )^{-1}dz-\int_{\Gamma_{\theta,\kappa}}e^{z(t_m-s)}(z^{\alpha}+A_h(t_m))^{-1}dzds\right \|\\
\leq&C\left \| \int_{t_{k-1}}^{t_k}\int_{\Gamma_{\theta,\kappa}\backslash\Gamma^\tau_{\theta,\kappa}}e^{z(t_m-s)}(z^{\alpha}+A_h(t_m))^{-1}dzds\right \|\\
&+C\left \| \int_{t_{k-1}}^{t_k}\int_{\Gamma^\tau_{\theta,\kappa}}e^{z(t_m-s)}(1-e^{(s-k\tau)z})\left (\delta_\tau(e^{-z\tau} )^{\alpha}+A_h(t_m)\right )^{-1}dzds\right \|\\
&+C\left \| \int_{t_{k-1}}^{t_k}\int_{\Gamma^\tau_{\theta,\kappa}}e^{z(t_m-s)}\left (\left (\delta_\tau(e^{-z\tau})^{\alpha}+A_h(t_m)\right )^{-1}-(z^{\alpha}+A_h(t_m))^{-1}\right )dzds\right \|\\
\leq &C\tau\int_{t_{k-1}}^{t_k}(t_m-s)^{\alpha-2}ds.
		\end{aligned}
	\end{equation*}
According to \eqref{eqassumae}, it has
\begin{equation*}
\begin{aligned}
	\sum_{k=1}^{m}\upsilon_{2,2,k,h}\leq C\tau\sum_{k=1}^{m}\int_{t_{k-1}}^{t_k}(t_m-s)^{\alpha-1}ds\|G_h(t_k)\|_{\dot{H}^2(\Omega)}
	\leq C\tau\|f(0)\|_{L^2(\Omega)}+C\tau\int_{0}^{t_m}\|f'(s)\|_{L^2(\Omega)}ds,
\end{aligned}
\end{equation*}
where the last inequality follows by Lemma \ref{thmregofG}. Combining Lemma \ref{lemestEF} and assumption \eqref{eqassumae}, there exists
	\begin{equation*}
		\begin{aligned}
			\sum_{k=1}^{m}\upsilon_{2,3,k,h}\leq C\sum_{k=1}^{m}\tau\int_{t_{k-1}}^{t_k}(t_m-s)^{\alpha-1} \|G_h(t_k)\|_{\dot{H}^2(\Omega)}ds
				\leq C\tau\|f(0)\|_{L^2(\Omega)}+C\tau\int_{0}^{t_m}\|f'(s)\|_{L^2(\Omega)}ds.
		\end{aligned}
	\end{equation*}
From Lemma \ref{thmHolderG}, it holds
	\begin{equation*}
	\begin{aligned}
	\sum_{k=1}^{m}\upsilon_{2,4,k,h}\leq& C\tau\sum_{k=1}^{m}\int_{t_{k-1}}^{t_k}(t_m-s)^{\alpha} \left \|\frac{G_h(s)-G_h(t_k)}{\tau}\right \|_{L^2(\Omega)}ds
	\leq C\tau\left (\|f(0)\|_{L^2(\Omega)}+\|f'(s)\|_{L^{\infty}(0,T,L^2(\Omega))}\right ).
	\end{aligned}
	\end{equation*}
	Since $m\in[0,L]$ is any fixed integer, taking $m=n$ results in
	\begin{equation*}
		\|G^n_h-G_h(t_n)\|_{L^2(\Omega)}\leq C\tau t_n^{\alpha-1}\left (\|f(0)\|_{L^2(\Omega)}+\|f'(s)\|_{L^{\infty}(0,T,L^2(\Omega))}\right )+C\sum_{k=1}^{n}\tau\|G^k_h-G_h(t_k)\|_{L^2(\Omega)}.
	\end{equation*}
	Then the discrete Gr\"{o}nwall inequality \cite{Thomee2006} leads to the desired results.
\end{proof}

\begin{theorem}\label{thmimhomII}
	If $f(0)\in L^2(\Omega)$ and $f'(s)\in L^{\infty}(0,T,L^2(\Omega))$, then there holds
	\begin{equation*}
	\uppercase\expandafter{\romannumeral2}\leq C\tau\left (\|f(0)\|_{L^2(\Omega)}+\|f'(s)\|_{L^{\infty}(0,T,L^2(\Omega))}\right ),
	\end{equation*}
	where $\uppercase\expandafter{\romannumeral2}$  is defined in \eqref{equimII}.
\end{theorem}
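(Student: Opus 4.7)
The plan is to start from the four-term decomposition in \eqref{equimII} and bound each $\uppercase\expandafter{\romannumeral2}_{j,k}$ so that $\sum_{k=1}^{n}\uppercase\expandafter{\romannumeral2}_{j,k}\leq C\tau(\|f(0)\|_{L^2(\Omega)}+\|f'(s)\|_{L^{\infty}(0,T,L^2(\Omega))})$ for $j=1,2,3,4$. Since $W_0=0$, Theorem \ref{thmregofGh} yields $\|G_h(t_k)\|_{\dot{H}^2(\Omega)}\leq C(\|f(0)\|_{L^2(\Omega)}+\|f'\|_{L^{\infty}(0,T,L^2(\Omega))})$ uniformly in $k\geq 1$, and this serves as the ``data constant'' whenever $G_h$ appears in the estimates. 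At the end one sets $m=n$ as usual.

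For $\uppercase\expandafter{\romannumeral2}_{1,k}$ I would exploit the factorization $A_h(t_m)-A_h(t_k)=a^2(t_m)\bigl(1/a^2(t_m)-1/a^2(t_k)\bigr)A_h(t_m)$. Together with \eqref{eqassuma} and \eqref{eqFntmest1} this gives $\|F^{n-k}_{\tau,m}(A_h(t_m)-A_h(t_k))\|\leq C|t_m-t_k|(t_{n-k}+\tau)^{-\alpha}$. Combining with Lemma \ref{lemerrorGhGnh} and taking $m=n$, the summand is dominated by $\tau(t_{n-k}+\tau)^{1-\alpha}\cdot\tau t_k^{\alpha-1}$, whose discrete Beta-type convolution sums to $O(\tau)$.

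The main obstacle is $\uppercase\expandafter{\romannumeral2}_{2,k}$. My plan is to reproduce the contour-integral strategy used for $\upsilon_{2,2,k,h}$ in the proof of Lemma \ref{lemerrorGhGnh}, but now for the kernel $F_h$ (whose symbol carries the extra factor $z^{\alpha-1}$ relative to $E_h$). Writing
\[
\tau F^{n-k}_{\tau,m}-\int_{t_{k-1}}^{t_k}F_h(t_n-s,t_m)ds
\]
as the sum of three contour-integral errors---the tail $\Gamma_{\theta,\kappa}\setminus\Gamma^\tau_{\theta,\kappa}$, the time-shift factor $1-e^{(s-k\tau)z}$, and the resolvent-symbol difference controlled by the last estimate of Lemma \ref{Lemseriesest}---each contributes an $O(\tau)$-weighted integral. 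Using \eqref{eqassumae} to absorb $(A_h(t_m)-A_h(t_k))G_h(t_k)$ and the uniform $\dot{H}^2$-bound on $G_h(t_k)$ from Theorem \ref{thmregofGh}, the resulting summand is essentially $C\tau\int_{t_{k-1}}^{t_k}(t_n-s)^{-1}ds$ multiplied by data, and its sum over $k$ is $O(\tau)$.

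The remaining two terms are lighter. For $\uppercase\expandafter{\romannumeral2}_{3,k}$, applying \eqref{eqassuma} on the subinterval $[t_{k-1},t_k]$ of length $\tau$ gives $\|(A_h(s)-A_h(t_k))u\|_{L^2(\Omega)}\leq C\tau\|u\|_{\dot{H}^2(\Omega)}$; together with Lemma \ref{lemestEFh} and the uniform $\dot{H}^2$-bound on $G_h(t_k)$, this yields $\uppercase\expandafter{\romannumeral2}_{3,k}\leq C\tau\int_{t_{k-1}}^{t_k}\|F_h(t_n-s,t_m)\|ds\cdot\|G_h(t_k)\|_{\dot{H}^2(\Omega)}$, which sums to $O(\tau)$. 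For $\uppercase\expandafter{\romannumeral2}_{4,k}$, Theorem \ref{thmHolderG} with $\gamma=1$ (admissible since $1<1+\alpha$) gives $\|G_h(s)-G_h(t_k)\|_{L^2(\Omega)}\leq C\tau s^{\alpha-1}(\|f(0)\|_{L^2(\Omega)}+\|f'\|_{L^{\infty}(0,T,L^2(\Omega))})$; combined with $\|A_h(t_m)F_h(t_n-s,t_m)\|\leq C(t_n-s)^{-\alpha}$ from Lemma \ref{lemestEFh} and \eqref{eqassuma}, the summand is bounded by $C\tau(t_n-s)^{1-\alpha}s^{\alpha-1}$, which sums to $O(\tau)$ via the standard Beta-integral estimate. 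Setting $m=n$ concludes the argument; the genuinely delicate step is the contour analysis for $\uppercase\expandafter{\romannumeral2}_{2,k}$, because the $F_h$-kernel is less smoothing than $E_h$ and extra care is required to keep the singularity integrable on the last subinterval.
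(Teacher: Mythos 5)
Your proposal follows essentially the same route as the paper's proof: Lemma \ref{lemerrorGhGnh} together with \eqref{eqFntmest1} for $\uppercase\expandafter{\romannumeral2}_{1,k}$, the three-way contour splitting of $\tau F^{m-k}_{\tau,m}-\int_{t_{k-1}}^{t_k}F_h(t_m-s,t_m)\,ds$ (tail, time shift, symbol difference via Lemma \ref{Lemseriesest}) for $\uppercase\expandafter{\romannumeral2}_{2,k}$, the local Lipschitz bound from \eqref{eqassumae} for $\uppercase\expandafter{\romannumeral2}_{3,k}$, and Theorem \ref{thmHolderG} combined with the smoothing of $A_hF_h$ for $\uppercase\expandafter{\romannumeral2}_{4,k}$. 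The one point to state explicitly in the $\uppercase\expandafter{\romannumeral2}_{2,k}$ step is that the factor $|t_n-t_k|\leq t_n-s$ supplied by \eqref{eqassumae} must be retained to cancel the $(t_n-s)^{-1}$ in the kernel-difference bound (as written, ``$C\tau\int_{t_{k-1}}^{t_k}(t_n-s)^{-1}ds$ times data'' diverges on the last subinterval), which is exactly how the paper closes that estimate.
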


\begin{proof}
	According to Lemma \ref{lemerrorGhGnh} and Eq. \eqref{eqFntmest1}, it has
	\begin{equation*}
		\begin{aligned}
			\sum_{k=1}^m\uppercase\expandafter{\romannumeral2}_{1,k}\leq& C\tau \sum_{k=1}^m(t_m-t_k)^{1-\alpha}\|G^k_h-G_h(t_k)\|_{L^2(\Omega)}\\
\leq& C\tau \|f(0)\|_{L^2(\Omega)}+C\tau\|f'(s)\|_{L^{\infty}(0,T,L^2(\Omega))}.
		\end{aligned}
	\end{equation*}
	Next, consider the difference between $\tau F^{m-k}_{\tau,m}$ and $\int_{t_{k-1}}^{t_k}F_h(t_m-s,t_m)ds$, i.e.,
	\begin{equation*}
	\begin{aligned}
	&\left \|\tau F^{m-k}_{\tau,m
	}-\int_{t_{k-1}}^{t_k}F_h(t_m-s,t_m)ds\right \|\leq \left \| \int_{t_{k-1}}^{t_k}F^{m-k}_{\tau,m
	}-F_h(t_m-s,t_m)ds\right \|\\
	\leq&C\left \| \int_{t_{k-1}}^{t_k}\int_{\Gamma^\tau_{\theta,\kappa}}e^{z(m-k)\tau}\delta_\tau(e^{-z\tau})^{\alpha-1}\left (\delta_\tau(e^{-z\tau})^{\alpha}+A_h(t_m)\right )^{-1}dz\right.\\
	&-\left .\int_{\Gamma_{\theta,\kappa}}e^{(t_m-s)z}z^{\alpha-1}(z^{\alpha}+A_h(t_m))^{-1}dzds\right \|\\
	\leq&C\left \| \int_{t_{k-1}}^{t_k}\int_{\Gamma_{\theta,\kappa}\backslash\Gamma^\tau_{\theta,\kappa}}e^{(t_m-s)z}z^{\alpha-1}(z^{\alpha}+A_h(t_m))^{-1}dzds\right \|\\
	&+C\left \| \int_{t_{k-1}}^{t_k}\int_{\Gamma^\tau_{\theta,\kappa}}e^{z(t_m-s)}(1-e^{(s-k\tau)z})\delta_\tau(e^{-z\tau})^{\alpha-1}\left (\delta_\tau(e^{-z\tau})^{\alpha}+A_h(t_m)\right )^{-1}dzds\right \|\\
	&+C\left \| \int_{t_{k-1}}^{t_k}\int_{\Gamma^\tau_{\theta,\kappa}}e^{z(t_m-s)}\left (\delta_\tau(e^{-z\tau})^{\alpha-1}\left (\delta_\tau(e^{-z\tau} )^{\alpha}+A_h(t_m)\right )^{-1}-z^{\alpha-1}(z^{\alpha}+A_h(t_m))^{-1}\right )dzds\right \|\\
	\leq &C\tau\int_{t_{k-1}}^{t_k}(t_m-s)^{-1}ds,
	\end{aligned}
	\end{equation*}
	where Lemma \ref{Lemseriesest} is used. According to assumption \eqref{eqassumae} and $t_m-t_k\leq t_m-s$ for $s\in[t_{k-1},t_k]$, one has
	\begin{equation*}
	\begin{aligned}
	\sum_{k=1}^m\uppercase\expandafter{\romannumeral2}_{2,k}\leq C\tau\sum_{k=1}^m\|G_h(t_k)\|_{\dot{H}^2(\Omega)}\leq C\tau\|f(0)\|_{L^2(\Omega)}+C\tau\int_{0}^{t_m}\|f'(s)\|_{L^2(\Omega)}ds.
	\end{aligned}
	\end{equation*}
	Using Lemma \ref{lemestEFh}, assumption \eqref{eqassumae}, and Theorem \ref{thmregofGh}, one can get
	\begin{equation*}
	\begin{aligned}
	\sum_{k=1}^m\uppercase\expandafter{\romannumeral2}_{3,k}\leq C\tau\sum_{k=1}^m\int_{t_{k-1}}^{t_k}\|G_h(s)\|_{\dot{H}^2(\Omega)}ds\leq C\tau\|f(0)\|_{L^2(\Omega)}+C\tau\int_{0}^{t_m}\|f'(s)\|_{L^2(\Omega)}ds.
	\end{aligned}
	\end{equation*}
Combining Lemma \ref{lemestEFh}, assumption \eqref{eqassumae}, and Theorem \ref{thmHolderG} results in
		\begin{equation*}
	\begin{aligned}
	\sum_{k=1}^m\uppercase\expandafter{\romannumeral2}_{4,k}\leq C\tau\sum_{k=1}^m\int_{t_{k-1}}^{t_k}(t_m-s)^{1-\alpha}\left \|\frac{G_h(s)-G_h(t_k)}{\tau}\right \|_{L^2(\Omega)}ds\leq C\tau\|f(0)\|_{L^2(\Omega)}+C\tau\|f'(s)\|_{L^{\infty}(0,T,L^2(\Omega))}.
	\end{aligned}
	\end{equation*}
	Thus taking $m=n$ leads to
	\begin{equation*}
		\uppercase\expandafter{\romannumeral2}\leq C\tau\left (\|f(0)\|_{L^2(\Omega)}+\|f'(s)\|_{L^{\infty}(0,T,L^2(\Omega))}\right ).
	\end{equation*}
\end{proof}

Combining Theorems \ref{thmimhomI} and \ref{thmimhomII}, one gets the result.
\begin{theorem}\label{thmfullinhomo}
	Let $W_h$ and $W^n_h$ be the solutions of Eqs. \eqref{eqsemischeme} and \eqref{eqfullscheme} respectively. If $f(0)\in L^2(\Omega)$ and $f'(s)\in L^{\infty}(0,T,L^2(\Omega))$, then there holds
	\begin{equation*}
	\|W_h(t_n)-W^n_h\|_{L^2(\Omega)}\leq C\tau\left (\|f(0)\|_{L^2(\Omega)}+\|f'(s)\|_{L^{\infty}(0,T,L^2(\Omega))}\right ).
	\end{equation*}
\end{theorem}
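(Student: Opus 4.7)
The plan is straightforward: I would combine the two bounds already proved in Theorems \ref{thmimhomI} and \ref{thmimhomII}. The essential work has already been carried out in those two results, so the proof of the final theorem reduces to assembling the pieces.

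First I would recall the decomposition \eqref{errordd} of $\|e^n_h\|_{L^2(\Omega)}$ into $\uppercase\expandafter{\romannumeral1} + \uppercase\expandafter{\romannumeral2}$. Since we are in the inhomogeneous setting with $W_0 = 0$, both representations \eqref{eqrepsWh} and \eqref{eqrepsWnh} lose their initial-value contributions, so what remains is precisely the source-term piece $\uppercase\expandafter{\romannumeral1}$ and the variable-coefficient piece $\uppercase\expandafter{\romannumeral2}$ that were analyzed separately in the preceding two theorems.

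Next I would invoke Theorem \ref{thmimhomI} to obtain
\[
\uppercase\expandafter{\romannumeral1} \leq C\tau\|f(0)\|_{L^2(\Omega)} + C\tau\int_0^{t_n}\|f'(s)\|_{L^2(\Omega)}ds,
\]
and then dominate the integral by $T\|f'(s)\|_{L^\infty(0,T,L^2(\Omega))}$ to match the required right-hand side. For the second piece I would apply Theorem \ref{thmimhomII} directly, which already delivers the bound in the exact form needed. Adding the two estimates and taking $m=n$ yields the claim.

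In short, the heavy lifting has already been absorbed into Theorems \ref{thmimhomI} and \ref{thmimhomII}. The main obstacle, namely the interplay between the noncommuting Riemann--Liouville derivative and the time-dependent coefficient $1/a^2(t)$, was handled through Lemma \ref{lemerrorGhGnh}, the H\"older-type continuity of $G_h$ established in Theorem \ref{thmHolderG}, and the contour-deformation estimates between $\Gamma_{\theta,\kappa}$ and $\Gamma^\tau_{\theta,\kappa}$ provided by Lemma \ref{Lemseriesest}. Consequently, the proof of Theorem \ref{thmfullinhomo} itself is essentially a matter of citation and addition, with no new technical difficulty to overcome.
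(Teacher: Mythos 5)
Your proposal matches the paper's own argument exactly: the paper proves this theorem in one line by combining Theorems \ref{thmimhomI} and \ref{thmimhomII} through the decomposition \eqref{errordd}, and your additional remarks (dominating $\int_0^{t_n}\|f'(s)\|_{L^2(\Omega)}ds$ by $T\|f'\|_{L^\infty(0,T,L^2(\Omega))}$ and taking $m=n$) are the correct, if implicit, finishing touches. No gap.
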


\subsection{Error estimate for the homogeneous problem}
In this subsection, we consider the error between $W_h(t_n)$ and $W^n_h$, which are the solutions of Eqs. \eqref{eqsemischeme} and \eqref{eqfullscheme} with $f=0$. Similarly, denote $e^n_h=W_h(t_n)-W^n_h$. Thus
\begin{equation*}
	\|e^n_h\|_{L^2(\Omega)}\leq \uppercase\expandafter{\romannumeral1}+\uppercase\expandafter{\romannumeral2},
\end{equation*}
where
\begin{equation*}
	\begin{aligned}
		\uppercase\expandafter{\romannumeral1}\leq&C\left\|F_h(t_n,t_m)W_{0,h}-\frac{1}{2\pi\mathbf{i} }\int_{\zeta=|\xi_\tau|}\zeta^{-n-1}\delta_\tau(\zeta)^{\alpha-1}\left (\delta_\tau(\zeta)^{\alpha}+A_h(t_m)\right )^{-1}\frac{\zeta}{\tau}W^0_hd\zeta\right\|_{L^2(\Omega)},\\
		\uppercase\expandafter{\romannumeral2}\leq&C\left\|\int_0^{t_n}F_h(t_n-s,t_m)(A_h(t_m)-A_h(s))\,_0D^{1-\alpha}_sW_h(s)ds\right.\\
		&-\left.\frac{1}{2\pi\mathbf{i} }\int_{\zeta=|\xi_\tau|}\zeta^{-n-1}\delta_\tau(\zeta)^{\alpha-1}\left (\delta_\tau(\zeta)^{\alpha}+A_h(t_m)\right )^{-1}\left (\sum_{j=1}^{\infty}(A_h(t_m)-A_h(t_j))\sum_{i=0}^{j-1}d^{1-\alpha}_{i}W^{j-i}_h\zeta^j\right )d\zeta\right\|_{L^2(\Omega)}
	\end{aligned}
\end{equation*}
with $\xi_\tau=e^{-\tau(\kappa+1)}$. Similar to the proof in \cite{Jin2016,Lubich1996}, one can obtain the following estimates.

\begin{theorem}\label{thmhomI}
	If $W_0\in \dot{H}^\epsilon(\Omega)$ with $\epsilon>0$, then there exists
	\begin{equation*}
		\begin{aligned}
			&\uppercase\expandafter{\romannumeral1}\leq Ct^{\alpha\epsilon/2-1}\tau \|W_0\|_{\dot{H}^{\epsilon}(\Omega)}.
		\end{aligned}
	\end{equation*}
\end{theorem}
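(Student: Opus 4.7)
The plan is to adapt the contour-integral strategy of \cite{Jin2016,Lubich1996} to compare the semidiscrete quantity $F_h(t_n,t_m)W_{0,h}$ with its fully discrete counterpart, exploiting Lemma~\ref{Lemseriesest} to harvest the expected factor of $\tau$ from the convolution-quadrature error.

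First I would rewrite $F_h(t_n,t_m)W_{0,h}$ via its defining integral over $\Gamma_{\theta,\kappa}$, and recast the discrete term by substituting $\zeta = e^{-z\tau}$, which maps the circle $\{|\zeta|=|\xi_\tau|\}$ onto the vertical segment $\Gamma^\tau = \{z = \kappa+1+\mathbf{i}y : |y|\leq \pi/\tau\}$. The factor $\zeta/\tau$ appearing in the discrete initial-data term, together with $d\zeta = -\tau\zeta\,dz$, produces an effective factor $e^{-z\tau}$ in the integrand, so the exponential becomes $e^{z(n-1)\tau}$ rather than $e^{zt_n}$. By the analyticity granted by Lemma~\ref{Lemseriesest} I then deform $\Gamma^\tau$ to $\Gamma^\tau_{\theta,\kappa}$ without changing the value of the integral.

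Next I would decompose the error into three canonical pieces: (a)~the tail integral over $\Gamma_{\theta,\kappa}\setminus\Gamma^\tau_{\theta,\kappa}$, where $|z|\geq \pi/(\tau\sin\theta)$ lets one trade exponential decay of $|e^{zt_n}|$ for a factor of $\tau$; (b)~the symbol discrepancy between $\delta_\tau(e^{-z\tau})^{\alpha-1}$ and $z^{\alpha-1}$, quantified by $|\delta_\tau(e^{-z\tau})^\alpha-z^\alpha|\leq C\tau|z|^{\alpha+1}$ combined with the elementary bound $|e^{-z\tau}-1|\leq C\tau|z|$ for the spurious $e^{-z\tau}$ factor; and (c)~the resolvent discrepancy, handled through the second resolvent identity and the same lemma. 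To reach the sharp $t_n^{\alpha\epsilon/2-1}$ weight, I would move a fractional power $A_h^{\epsilon/2}$ onto $W_{0,h}$, using $\|W_{0,h}\|_{\dot{H}^\epsilon(\Omega)}\leq C\|W_0\|_{\dot{H}^\epsilon(\Omega)}$ from Lemma~\ref{lemprojection} together with the sectorial bound $\|A_h^{\epsilon/2}(z^\alpha+A_h(t_m))^{-1}\|\leq C|z|^{\alpha(\epsilon/2-1)}$, valid uniformly in $t_m$ thanks to \eqref{eqassumb}. Each of the three contributions is then dominated by
\[
C\tau\int_{\Gamma_{\theta,\kappa}}|e^{zt_n}||z|^{\alpha\epsilon/2-1}|dz|\leq Ct_n^{\alpha\epsilon/2-1}\tau
\]
upon choosing $\kappa = 1/t_n$, which is the desired estimate.

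The main obstacle will be the bookkeeping of the $\zeta/\tau$ initial-data factor: after the substitution it contributes the spurious $e^{-z\tau}$, which must be carefully split as $1+(e^{-z\tau}-1)$ so that the remainder feeds into the same $\tau|z|$-sized error budget as the symbol and resolvent discrepancies. Once this is done, what remains are routine Lubich-type contour estimates, and the dependence on the frozen time $t_m$ causes no trouble because all the relevant sectorial constants for $A_h(t_m)$ are uniform in $t_m\in[0,T]$ by \eqref{eqassumb}.
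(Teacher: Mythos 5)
Your overall framework (substitute $\zeta=e^{-z\tau}$, deform to $\Gamma^\tau_{\theta,\kappa}$, and split the error into the tail over $\Gamma_{\theta,\kappa}\setminus\Gamma^\tau_{\theta,\kappa}$, the spurious $1-e^{-z\tau}$ factor, and the symbol/resolvent discrepancies, each worth a factor $\tau|z|$) is the standard Lubich-type argument and matches the route the paper points to via \cite{Jin2016,Lubich1996}. The gap is in the step that is supposed to produce the weight $t_n^{\alpha\epsilon/2-1}$ from $W_0\in\dot{H}^\epsilon(\Omega)$, which is the entire content of the theorem beyond the nonsmooth-data bound $C\tau t_n^{-1}\|W_0\|_{L^2(\Omega)}$. ``Moving $A_h^{\epsilon/2}$ onto $W_{0,h}$'' means inserting $\mathbf{I}=A_h^{-\epsilon/2}A_h^{\epsilon/2}$, which leaves the \emph{negative} power $A_h^{-\epsilon/2}$ attached to the resolvent; but $\|A_h^{-\epsilon/2}(z^\alpha+A_h(t_m))^{-1}\|$ is still only $O(|z|^{-\alpha})$ (a bounded negative power of $A_h$ buys no extra decay in $z$), so your error integrands remain of size $\tau$ and you recover only the $L^2$ rate. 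The bound you quote, $\|A_h^{\epsilon/2}(z^\alpha+A_h(t_m))^{-1}\|\le C|z|^{\alpha(\epsilon/2-1)}$, concerns the positive power and is not the operator that appears after the insertion. Relatedly, your final display is inconsistent: with $\kappa=1/t_n$ one has $\int_{\Gamma_{\theta,\kappa}}|e^{zt_n}||z|^{\alpha\epsilon/2-1}|dz|\le Ct_n^{-\alpha\epsilon/2}$, not $Ct_n^{\alpha\epsilon/2-1}$ (the exponents agree only when $\alpha\epsilon=1$); the integrand you actually need is $\tau|z|^{-\alpha\epsilon/2}$.

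The missing idea is to first rewrite both kernels using $z^{\alpha-1}(z^\alpha+A_h(t_m))^{-1}=z^{-1}\bigl(\mathbf{I}-A_h(t_m)(z^\alpha+A_h(t_m))^{-1}\bigr)$ and its discrete analogue with $\delta_\tau(\zeta)$ (this is the identity already flagged in the Remark after Lemma~\ref{lemestEF}). The $\mathbf{I}$-parts reproduce $W_{0,h}$ exactly in both representations --- note $\delta_\tau(\zeta)^{-1}\zeta/\tau=\zeta/(1-\zeta)$ has all Taylor coefficients equal to one --- so they cancel with no error contribution, and the surviving integrands carry an explicit factor $A_h(t_m)$. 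Only then can you split $A_h(t_m)=A_h(t_m)^{1-\epsilon/2}A_h(t_m)^{\epsilon/2}$, use $\|A_h(t_m)^{1-\epsilon/2}(z^\alpha+A_h(t_m))^{-1}\|\le C|z|^{-\alpha\epsilon/2}$ (uniformly in $t_m$ by \eqref{eqassumb}) together with $\|A_h(t_m)^{\epsilon/2}W_{0,h}\|_{L^2(\Omega)}\le C\|W_0\|_{\dot{H}^\epsilon(\Omega)}$, and run your three-way decomposition: each piece then has integrand of size $\tau|z|\cdot|z|^{-1}\cdot|z|^{-\alpha\epsilon/2}=\tau|z|^{-\alpha\epsilon/2}$, and $\tau\int_{\Gamma_{\theta,\kappa}}|e^{zt_n}||z|^{-\alpha\epsilon/2}|dz|\le C\tau t_n^{\alpha\epsilon/2-1}$ with $\kappa=1/t_n$, as required. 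With that repair the rest of your argument goes through.
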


As for $\uppercase\expandafter{\romannumeral2}$, when $t_n=t_m$, it has

\begin{equation}\label{eqromannumera}
	\begin{aligned}
		\uppercase\expandafter{\romannumeral2}\leq&C\left\|\int_0^{t_m}F_h(t_m-s,t_m)(A_h(t_m)-A_h(s))\,_0D^{1-\alpha}_sW_h(s)ds\right.\\
		&-\left.\tau\sum_{k=1}^{m}F^{m-k}_{\tau,m}\left ((A_h(t_m)-A_h(t_k))\sum_{i=0}^{k-1}d^{1-\alpha}_{i}W^{k-i}_h\right )\right\|_{L^2(\Omega)}\\
		\leq&C\left\|-\int_0^{t_m}\frac{\partial}{\partial s}\left (F_h(t_m-s,t_m)(A_h(t_m)-A_h(s))\right )\,_0I^{\alpha}_sW_h(s)ds\right.\\
		&-\left.\tau\sum_{k=1}^{m}\left (F^{m-k}_{\tau,m}(A_h(t_m)-A_h(t_k))\right )\left (\sum_{i=0}^{k-1}d^{-\alpha}_{i}W^{k-i}_h-\sum_{i=0}^{k-2}d^{-\alpha}_{i}W^{k-1-i}_h\right )/\tau\right\|_{L^2(\Omega)}\\
		\leq&C\left\|\int_0^{t_m}\frac{\partial}{\partial (t_m-s)}\left (F_h(t_m-s,t_m)(A_h(t_m)-A_h(s))\right )\,_0I^{\alpha}_sW_h(s)ds\right.\\
		&-\left.\tau\sum_{k=1}^{m}\left (F^{m-k}_{\tau,m}(A_h(t_m)-A_h(t_k))-F^{m-k-1}_{\tau,m}(A_h(t_m)-A_h(t_{k-1}))\right )/\tau\sum_{i=0}^{k-1}d^{-\alpha}_{i}W^{k-i}_h\right\|_{L^2(\Omega)}\\
		\leq&\sum_{k=1}^{m}(\uppercase\expandafter{\romannumeral2}_{1,k}+\uppercase\expandafter{\romannumeral2}_{2,k}+\uppercase\expandafter{\romannumeral2}_{3,k}),
	\end{aligned}
\end{equation}
where
\begin{equation*}
	\begin{aligned}
		\uppercase\expandafter{\romannumeral2}_{1,k}\leq& \left\|\left (F^{m-k}_{\tau,m}(A_h(t_m)-A_h(t_k))-F^{m-k-1}_{\tau,m}(A_h(t_m)-A_h(t_{k-1}))\right )\left (\!_0I^{\alpha}_{t_k}W_h-\sum_{i=0}^{k-1}d^{-\alpha}_{i}W^{k-i}_h\right )\right\|_{L^2(\Omega)},\\
		\uppercase\expandafter{\romannumeral2}_{2,k}\leq& \left\|\bigg(\int_{t_{k-1}}^{t_k}\frac{\partial}{\partial (t_m-s)}\left (F_h(t_m-s,t_m)(A_h(t_m)-A_h(s))\right )ds\right .\\
		&\left .-\left (F^{m-k}_{\tau,m}(A_h(t_m)-A_h(t_k))-F^{m-k-1}_{\tau,m}(A_h(t_m)-A_h(t_{k-1}))\right )\bigg) \!_0I^{\alpha}_{t_k}W_h\right\|_{L^2(\Omega)},\\
		\uppercase\expandafter{\romannumeral2}_{3,k}\leq& \left\|\int_{t_{k-1}}^{t_k}\frac{\partial}{\partial (t_m-s)}\left (F_h(t_m-s,t_m)(A_h(t_m)-A_h(s))\right )(\!_0I^{\alpha}_{s}W_h-\!_0I^{\alpha}_{t_k}W_h)ds\right\|_{L^2(\Omega)}.\\
	\end{aligned}
\end{equation*}

To estimate $\uppercase\expandafter{\romannumeral2}_{1,k}$, denote $U_h(t_k)=\!_0I^{\alpha}_{t_k}W_h$ and $U^k_h=\sum_{i=0}^{k-1}d^{-\alpha}_{i}W^{k-i}_h$. By means of Laplace transform, $U_h$ can be represented by
\begin{equation*}
	U_h(t)=H_h(t,t_m)W^{0}_{h}+\int_0^t H_h(t-s,t_m)(A_h(t_m)-A_h(s))\frac{\partial}{\partial s}U_h(s)ds
\end{equation*}
and
\begin{equation*}
	\begin{aligned}
		U^{n}_h=&\sum_{i=0}^{n-1}d^{-\alpha}_{i}W^{n-i}_h
		=\frac{1}{2\pi \mathbf{i}}\int_{\zeta=|\xi_\tau|}\zeta^{-n-1}\delta_\tau(\zeta)^{-1}(\delta_\tau(\zeta)^{\alpha}+A_h(t_m))^{-1}\frac{\zeta}{\tau}W^0_hd\zeta\\
		&+\frac{1}{2\pi \mathbf{i}}\int_{\zeta=|\xi_\tau|}\zeta^{-n-1}\delta_\tau(\zeta)^{-1}(\delta_\tau(\zeta)^{\alpha}+A_h(t_m))^{-1}\sum_{j=1}^{\infty}(A_h(t_m)-A_h(t_j))(U^j_h-U^{j-1}_h)/\tau\zeta^jd\zeta,
	\end{aligned}
\end{equation*}
where
\begin{equation*}
	H_h(t,t_m)=\frac{1}{2\pi \mathbf{i}}\int_{\Gamma_{\theta,\kappa}}e^{zt}z^{-1}(z^\alpha+A_h(t_m))^{-1}dz.
\end{equation*}
Similar to the proof of Theorems \ref{thmregofG} and \ref{thmHolderG}, one can get the following estimates of $U_h(t)$.
\begin{theorem}\label{thmregofUh}
	If $W_0\in L^2(\Omega)$, then  $U_h(t)$ satisfies
	\begin{equation*}
	\|U_h(t)\|_{L^2(\Omega)}\leq C\|W_0\|_{L^2(\Omega)},\qquad
	\|U_h(t)\|_{\dot{H}^2(\Omega)}\leq C\|W_0\|_{L^2(\Omega)}.
	\end{equation*}
And if $W_0\in \dot{H}^\eta(\Omega)$, $\eta\in[0,2]$, then it holds
\begin{equation*}
		\|A_h^{\eta/2} U_h(t)\|_{\dot{H}^2(\Omega)}\leq C\|W_0\|_{\dot{H}^{\eta}(\Omega)}.
\end{equation*}
\end{theorem}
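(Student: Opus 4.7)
I prove the most general, third bound $\|A_h^{\eta/2}U_h(t)\|_{\dot H^2(\Omega)} \leq C\|W_0\|_{\dot H^\eta(\Omega)}$ for $\eta \in [0,2]$ directly, and deduce the first two estimates as consequences. The approach mirrors that of Theorem~\ref{thmregofG}, but because the representation for $U_h$ contains $\partial_s U_h = G_h$, I would first integrate by parts to rewrite the identity purely in terms of $U_h$, and then apply $A_h^{1+\eta/2}$ and close the estimate via Gr\"onwall.

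The first step is a key operator identity for $H_h$. Using $A_h(t_m)(z^{\alpha}+A_h(t_m))^{-1} = \mathbf{I} - z^{\alpha}(z^{\alpha}+A_h(t_m))^{-1}$ together with $\frac{1}{2\pi\mathbf{i}}\int_{\Gamma_{\theta,\kappa}}e^{zt}z^{-1}dz = \mathbf{I}$ yields
\[
A_h(t_m)\,H_h(t,t_m) = \mathbf{I} - F_h(t,t_m),
\]
which, combined with $\|F_h\|\leq C$ from Lemma~\ref{lemestEFh} and $A_h = a^2(t_m)A_h(t_m)$ (with $a^2$ bounded by \eqref{eqassumb}), gives the crucial uniform bound $\|A_h H_h(t,t_m)\|\leq C$. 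Next I would integrate by parts in $s$ in the representation for $U_h$: since $\partial_s U_h = G_h$, $U_h(0) = 0$, and $H_h(0,t_0) = 0$, the boundary terms vanish and I obtain
\begin{equation*}
U_h(t) = H_h(t,t_0)W_{0,h} + \int_0^t E_h(t-s,t_0)(A_h(t_0)-A_h(s))U_h(s)\,ds + \int_0^t H_h(t-s,t_0)\,A_h'(s)\,U_h(s)\,ds,
\end{equation*}
where $A_h'(s) := \frac{d}{ds}(1/a^2(s))\,A_h$, whose coefficient is bounded a.e.\ by \eqref{eqassuma}.

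I would then apply $A_h^{1+\eta/2}$ to both sides and bound each term in $L^2$. For the initial-data term, the identity above together with commutativity of $A_h$ with the resolvent-built operators gives $\|A_h^{1+\eta/2}H_h(t,t_0)W_{0,h}\|_{L^2} \leq C\|W_{0,h}\|_{\dot H^\eta}\leq C\|W_0\|_{\dot H^\eta}$. For the two convolution integrals, I use $(A_h(t_0)-A_h(s))v = (1/a^2(t_0)-1/a^2(s))A_h v$, commute the extra $A_h$ factor through $E_h$ (respectively $H_h$), and invoke $\|A_h E_h(t-s,t_0)\|\leq C(t-s)^{-1}$ from Lemma~\ref{lemestEFh} and the uniform bound on $A_h H_h$. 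Setting $t_0 = t$, the Lipschitz bound $|1/a^2(t)-1/a^2(s)|\leq C(t-s)$ from \eqref{eqassuma} cancels the $(t-s)^{-1}$ singularity, and the two integrands are then bounded by $C\|A_h^{1+\eta/2}U_h(s)\|_{L^2}$, producing
\[
\|A_h^{1+\eta/2}U_h(t)\|_{L^2}\leq C\|W_0\|_{\dot H^\eta} + C\int_0^t \|A_h^{1+\eta/2}U_h(s)\|_{L^2}\,ds,
\]
to which Gr\"onwall's inequality applies. Specializing $\eta = 0$ gives $\|U_h(t)\|_{\dot H^2}\leq C\|W_0\|_{L^2}$, and the first bound $\|U_h(t)\|_{L^2}\leq C\|W_0\|_{L^2}$ then follows from the Poincar\'e-type inequality $\|u\|_{L^2}\leq \lambda_1^{-1}\|u\|_{\dot H^2}$ available on $X_h\subset H^1_0(\Omega)$.

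The main obstacle is the interplay between integration by parts and the choice $t_0 = t$: without the IBP one must estimate $\|G_h\|_{\dot H^2}$, which by Theorem~\ref{thmregofGh} requires extra regularity $W_0\in \dot H^\epsilon$ rather than merely $L^2$; and without freezing at $t_0=t$ the kernel $|1/a^2(t_0)-1/a^2(s)|(t-s)^{-1}$ is not integrable in $s$ and the Gr\"onwall argument fails. Both ideas together are what allows us to pass from $W_0\in\dot H^\eta$ to $U_h\in\dot H^{\eta+2}$, exhibiting the full two-order smoothing of the fractional integral $\,_0I_t^\alpha$.
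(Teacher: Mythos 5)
Your argument is correct and follows exactly the route the paper intends (it gives no explicit proof here, deferring to the template of Theorems \ref{thmregofG} and \ref{thmHolderG}: contour-integral representation, the resolvent bounds of Lemma \ref{lemestEFh}, freezing the coefficient at $t_0=t$ so that the Lipschitz bound \eqref{eqassuma} cancels the $(t-s)^{-1}$ singularity, then Gr\"onwall). Your two additions --- the identity $A_h(t_m)H_h(t,t_m)=\mathbf{I}-F_h(t,t_m)$ giving $\|A_hH_h\|\leq C$, and the integration by parts that replaces $\partial_s U_h=G_h$ by $U_h$ itself (with vanishing boundary terms since $H_h(0^+,t_m)=0$ and $U_h(0)=0$) --- are exactly what is needed to obtain the bound for $W_0\in L^2(\Omega)$ rather than $W_0\in\dot H^\epsilon(\Omega)$, and you correctly identify why the na\"ive estimate through $\|G_h\|_{\dot H^2(\Omega)}$ would fail.
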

\begin{theorem}\label{thmHolderU}
	Let $U_h=\!_0I^{\alpha}_tW_h(t)$. When $W_0\in L^2(\Omega)$, there exists
	\begin{equation*}
	\begin{aligned}
	\left\|\frac{U_h(t)-U_h(t-\tau)}{\tau^{\gamma_1}}\right\|_{L^2(\Omega)}\leq& Ct^{\alpha-\gamma_1}\|W_0\|_{L^2(\Omega)},
	\end{aligned}
	\end{equation*}
		where $\gamma_1<1+\alpha$. And when $W_0\in \dot{H}^\epsilon(\Omega)$, one has
	\begin{equation*}
	\begin{aligned}
	\left\|A_h\frac{U_h(t)-U_h(t-\tau)}{\tau^{\gamma_2}}\right\|_{L^2(\Omega)}\leq& Ct^{\alpha\epsilon/2-\gamma_2}\|W_0\|_{\dot{H}^\epsilon(\Omega)},
	\end{aligned}
	\end{equation*}
	where $\gamma_2\leq 1$.
\end{theorem}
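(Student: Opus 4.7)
The plan is to mirror the proof of Theorem \ref{thmHolderG}, working from the representation
\[
U_h(t) = H_h(t,t_m)W^0_h + \int_0^t H_h(t-s,t_m)(A_h(t_m)-A_h(s))\,U_h'(s)\,ds,
\]
and noting that $U_h'(s)=G_h(s)$. I would split
\[
U_h(t)-U_h(t-\tau) = \mathcal{D}(t) + \mathcal{R}(t) + \mathcal{O}(t),
\]
where $\mathcal{D}$ is the data difference $(H_h(t,t_m)-H_h(t-\tau,t_m))W^0_h$, $\mathcal{R}$ is the ``recent'' integral $\int_{t-\tau}^{t}H_h(t-s,t_m)(A_h(t_m)-A_h(s))G_h(s)\,ds$, and $\mathcal{O}$ is the ``old'' integral $\int_0^{t-\tau}(H_h(t-s,t_m)-H_h(t-\tau-s,t_m))(A_h(t_m)-A_h(s))G_h(s)\,ds$. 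The standing tools would be the contour bound $|1-e^{-z\tau}|/\tau^{\gamma}\leq C|z|^{\gamma}$ on $\Gamma_{\theta,\kappa}$, Lemma \ref{lemestEFh}, the resolvent bound $\|A_h^{\beta}(z^\alpha+A_h(t_m))^{-1}\|\leq C|z|^{\alpha(\beta-1)}$ for $\beta\in[0,1]$ (using that $A_h$ and $A_h(t_m)=A_h/a^2(t_m)$ commute), and the regularity estimates of Theorem \ref{thmregofGh}.

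For the $L^2$ estimate ($W_0\in L^2(\Omega)$), I would treat $\mathcal{D}$ by writing it as a contour integral and applying $|1-e^{-z\tau}|/\tau^{\gamma_1}\leq C|z|^{\gamma_1}$ together with $\|(z^\alpha+A_h(t_m))^{-1}\|\leq C|z|^{-\alpha}$; choosing $\kappa=1/t$ then yields $\|\mathcal{D}\|_{L^2}/\tau^{\gamma_1}\leq Ct^{\alpha-\gamma_1}\|W_0\|_{L^2}$. For $\mathcal{O}$, I would use the identity $H_h(\cdot,t_m)A_h=a^2(t_m)(I-F_h(\cdot,t_m))$ (so $A_h(t_m)$-prefactored variants are uniformly bounded) to obtain $\|A_h(t_m)(H_h(t-s,t_m)-H_h(t-\tau-s,t_m))\|/\tau^{\gamma_1}\leq C(t-s)^{-\gamma_1}$, combine with $|t_m-s|\leq t-s$ and $\|G_h(s)\|_{L^2}\leq Cs^{\alpha-1}\|W_0\|_{L^2}$, producing a Beta-type integral $\int_0^{t-\tau}(t-s)^{1-\gamma_1}s^{\alpha-1}\,ds\leq Ct^{1+\alpha-\gamma_1}$ (convergent for $\gamma_1<2$). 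Finally, for $\mathcal{R}$, I would absorb one factor of $A_h(t_m)$ into the kernel so that $\|H_h(t-s,t_m)A_h(t_m)\|\leq C$ and use $|t_m-s|\leq\tau$ together with $\|G_h(s)\|_{L^2}\leq Cs^{\alpha-1}\|W_0\|_{L^2}$ to reach $C\tau^{2-\gamma_1}t^{\alpha-1}\|W_0\|_{L^2}$, which together with $\tau\leq t$ and $t\leq T$ again dominates by $Ct^{\alpha-\gamma_1}\|W_0\|_{L^2}$.

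For the $\dot H^\epsilon$ estimate, I would repeat the same decomposition with $A_h$ applied throughout. Since $A_h$ commutes with $A_h(t_m)$, writing $A_h(z^\alpha+A_h(t_m))^{-1}W_0=A_h^{1-\epsilon/2}(z^\alpha+A_h(t_m))^{-1}A_h^{\epsilon/2}W_0$ and invoking $\|A_h^{1-\epsilon/2}(z^\alpha+A_h(t_m))^{-1}\|\leq C|z|^{-\alpha\epsilon/2}$ converts $\mathcal{D}$ into $Ct^{\alpha\epsilon/2-\gamma_2}\|W_0\|_{\dot H^\epsilon}$ after the $\kappa=1/t$ choice. For the convolution pieces I would use $\|(A_h(t_m)-A_h(s))G_h(s)\|_{L^2}\leq C|t_m-s|\|G_h(s)\|_{\dot H^2}$ and $\|G_h(s)\|_{\dot H^2}\leq Cs^{\epsilon\alpha/2-1}\|W_0\|_{\dot H^\epsilon}$, producing the Beta integral $\int_0^{t-\tau}(t-s)^{1-\gamma_2}s^{\epsilon\alpha/2-1}\,ds\leq Ct^{1-\gamma_2+\epsilon\alpha/2}$ for $\mathcal{O}$, and $C\tau^{1-\gamma_2}\int_{t-\tau}^{t}s^{\epsilon\alpha/2-1}\,ds\leq C\tau^{1-\gamma_2}t^{\epsilon\alpha/2}$ for $\mathcal{R}$.

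The main obstacle, and the origin of the stated ranges of $\gamma_1,\gamma_2$, is the compatibility of the temporal Hölder exponent with the singularity of $G_h$ at $s=0$. For the $L^2$ case the Beta-type integrand $(t-s)^{1-\gamma_1}s^{\alpha-1}$ in $\mathcal{O}$ (and the contour integral producing $\int|z|^{\gamma_1-\alpha-2}|dz|$ on $\Gamma_{\theta,\kappa}$ with $\kappa=1/t$ in the $\mathcal{D}$/$\mathcal{R}$ analysis) forces $\gamma_1<1+\alpha$; for the $\dot H^\epsilon$ case the factor $\tau^{1-\gamma_2}$ arising in $\mathcal{R}$ must be absorbable by $\tau\leq t\leq T$, which forces $\gamma_2\leq 1$. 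Setting $t_m=t$ at the end and collecting the three bounds gives the desired estimates.
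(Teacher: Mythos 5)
Your proposal is correct and follows essentially the same route the paper intends: the paper omits the details with the remark that the result is obtained ``similar to the proof of Theorems \ref{thmregofG} and \ref{thmHolderG}'', and your decomposition into the data difference, the ``old'' shifted-kernel integral, and the ``recent'' tail, bounded via $|1-e^{-z\tau}|/\tau^{\gamma}\leq C|z|^{\gamma}$ on $\Gamma_{\theta,\kappa}$ with $\kappa=1/t$, the identity $H_h(\cdot,t_m)A_h(t_m)=\mathbf{I}-F_h(\cdot,t_m)$, and the regularity of $G_h$, is exactly the Theorem \ref{thmHolderG} template, and you correctly locate where $\gamma_1<1+\alpha$ and $\gamma_2\leq 1$ are forced. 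The only (immaterial) difference from the authors' own computation is that they first integrate by parts to put $\partial_s$ on the kernel $H_h(t-s,t_m)(A_h(t_m)-A_h(s))$ and then use the uniform bound $\|U_h(s)\|_{\dot{H}^2(\Omega)}\leq C\|W_0\|_{L^2(\Omega)}$ from Theorem \ref{thmregofUh}, whereas you keep $\partial_s U_h=G_h$ under the integral and absorb its integrable singularity $s^{\alpha-1}$ (resp. $s^{\epsilon\alpha/2-1}$) into a Beta-type integral.
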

Then we consider the difference between $U_h(t_k)$ and $U^k_h$.
\begin{theorem}
	If $W_0\in L^2(\Omega)$ and $\frac{1}{a^2(t)}\in C^2[0,T]$, then there exists
	\begin{equation*}
	\|U^n_h-U_h(t_n)\|_{L^2(\Omega)}\leq Ct_n^{\alpha-1}\tau\|W_0\|_{L^2(\Omega)}.
	\end{equation*}
\end{theorem}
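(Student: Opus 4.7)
The plan is to mirror the structure of Lemma \ref{lemerrorGhGnh}, with $U_h$ playing the role of $G_h$ and the kernel $H_h$ playing the role of $E_h$. Using the two representations of $U_h(t_n)$ and $U^n_h$ displayed just above the theorem, I would split
\begin{equation*}
\|U^n_h-U_h(t_n)\|_{L^2(\Omega)}\le \upsilon_{1,h}+\upsilon_{2,h},
\end{equation*}
where $\upsilon_{1,h}$ compares $H_h(t_n,t_m)W_{0,h}$ with its discrete inverse-Laplace analogue, while $\upsilon_{2,h}$ compares the two convolution pieces involving $(A_h(t_m)-A_h(\cdot))\partial_s U_h$ against the discrete difference quotient $(U^j_h-U^{j-1}_h)/\tau$. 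For $\upsilon_{1,h}$, the bound $\upsilon_{1,h}\le C\tau t_n^{\alpha-1}\|W_0\|_{L^2(\Omega)}$ follows from the standard Lubich argument: deform $\zeta=e^{-z\tau}$ onto $\Gamma^\tau_{\theta,\kappa}$, apply Lemma \ref{Lemseriesest}, and bound the resolvent via $\|(z^\alpha+A_h(t_m))^{-1}\|\le C|z|^{-\alpha}$, exactly as in \cite{Jin2016,Lubich1996}.

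For $\upsilon_{2,h}$, introduce the discrete kernel $H^n_{\tau,m}$ by
\begin{equation*}
\tau\sum_{i=0}^{\infty}H^i_{\tau,m}\zeta^i=\delta_\tau(\zeta)^{-1}\left(\delta_\tau(\zeta)^\alpha+A_h(t_m)\right)^{-1},
\end{equation*}
which, after deformation to $\Gamma^\tau_{\theta,\kappa}$, satisfies $\|A_h(t_m)H^n_{\tau,m}\|\le C(t_m+\tau)^{\alpha-1}$ in analogy with \eqref{eqestAEn}. Then I would write $\upsilon_{2,h}\le\sum_{k=1}^{m}(\upsilon_{2,1,k,h}+\upsilon_{2,2,k,h}+\upsilon_{2,3,k,h}+\upsilon_{2,4,k,h})$ exactly as in Lemma \ref{lemerrorGhGnh}: a recursive piece involving $U^k_h-U_h(t_k)$ controlled by the bound on $A_h(t_m)H^{m-k}_{\tau,m}$ and \eqref{eqassuma}; a quadrature piece on $\tau H^{m-k}_{\tau,m}-\int_{t_{k-1}}^{t_k}H_h(t_m-s,t_m)\,ds$ handled by the three-way contour split on $\Gamma^\tau_{\theta,\kappa}$ and $\Gamma_{\theta,\kappa}\setminus\Gamma^\tau_{\theta,\kappa}$; a coefficient-oscillation piece controlled by \eqref{eqassuma} together with Theorem \ref{thmregofUh}; and a H\"older piece controlled by Theorem \ref{thmHolderU}.

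The main obstacle is retaining the sharp prefactor $t_n^{\alpha-1}$ (rather than the weaker $t_n^{-1}$ produced by naive estimates) without requiring smoother data than $W_0\in L^2(\Omega)$. The Volterra representation of $U_h$ above carries a factor $\partial_s U_h$, which coincides with $G_h$ and exhibits an $s^{\alpha-1}$ singularity for $L^2$ data. The hypothesis $1/a^2\in C^2[0,T]$ is what permits one integration by parts, shifting $\partial_s$ off $U_h$ onto the smooth factor $(A_h(t_m)-A_h(s))H_h(t_m-s,t_m)$, so that one needs only the H\"older control of $U_h$ itself (Theorem \ref{thmHolderU} with $\gamma_1\in(1,1+\alpha)$) rather than of $\partial_s U_h$. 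Finally, the discrete Gr\"onwall inequality \cite{Thomee2006} absorbs the recursive term $\sum_k\tau\|U^k_h-U_h(t_k)\|$, and setting $m=n$ yields the claimed estimate.
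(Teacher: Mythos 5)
Your proposal follows essentially the same route as the paper: split off the initial-data term (handled by the standard Lubich/Jin argument), introduce the discrete kernel $H^n_{\tau,m}$, integrate by parts to move $\partial_s$ off $U_h$ onto the kernel--coefficient product (which is exactly where $\tfrac{1}{a^2}\in C^2[0,T]$ enters), control the remaining pieces by Theorems \ref{thmregofUh} and \ref{thmHolderU}, and close with the discrete Gr\"{o}nwall inequality. Two cosmetic differences only: the paper organizes the convolution error into three pieces (the coefficient-oscillation terms are absorbed into the quadrature piece, which is then subdivided into four parts $\vartheta_{1,k},\dots,\vartheta_{4,k}$) rather than your four, and the correct analogue of \eqref{eqestAEn} for the kernel $z^{-1}(z^{\alpha}+A_h(t_m))^{-1}$ is $\|A_h(t_m)H^{n}_{\tau,m}\|\leq C$ rather than your $C(t_m+\tau)^{\alpha-1}$, though the weaker bound still suffices for the Gr\"{o}nwall step.
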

\begin{proof}
Here, for $n=m$, we can split it into
\begin{equation*}
	\|U_h(t_m)-U^m_h\|_{L^2(\Omega)}\leq \upsilon_{1,h}+\upsilon_{2,h},
\end{equation*}
where
\begin{equation*}
	\begin{aligned}
		\upsilon_{1,h}\leq& C\left \|H_h(t_m,t_m)W_{0,h}-\frac{1}{2\pi \mathbf{i}}\int_{\zeta=|\xi_\tau|}\zeta^{-m-1}\delta_\tau(\zeta)^{-1}(\delta_\tau(\zeta)^{\alpha}+A_h(t_m))^{-1}\frac{\zeta}{\tau}W^0_hd\zeta\right \|_{L^2(\Omega)},\\
		\upsilon_{2,h}\leq& C\left \|\int_0^{t_m} H_h(t_m-s,t_m)(A_h(t_m)-A_h(s))\frac{\partial}{\partial s}U_h(s)ds\right .\\
		&\left .-\frac{1}{2\pi \mathbf{i}}\int_{\zeta=|\xi_\tau|}\zeta^{-m-1}\delta_\tau(\zeta)^{-1}(\delta_\tau(\zeta)^{\alpha}+A_h(t_m))^{-1}\sum_{j=1}^{\infty}(A_h(t_m)-A_h(t_j))(U^j_h-U^{j-1}_h)/\tau\zeta^jd\zeta\right \|_{L^2(\Omega)}.\\
	\end{aligned}
\end{equation*}
Similar to  the proof                                                                                                                                               in \cite{Jin2016,Lubich1996}, the following estimate can be got
	\begin{equation*}
\upsilon_{1,h}\leq C\tau t_m^{\alpha-1}\|W_0\|_{L^2(\Omega)}.
\end{equation*}
To get the estimate of $\upsilon_{2,h}$, introduce
\begin{equation*} \tau\sum_{i=0}^{\infty}H^{i}_{\tau,m}\zeta^i=\delta_\tau(\zeta)^{-1}\left (\delta_\tau(\zeta)^{\alpha}+A_h(t_m)\right )^{-1},
\end{equation*}
where
\begin{equation*}
	H^n_{\tau,m}=\frac{1}{2\pi \mathbf{i}}\int_{\Gamma^\tau_{\theta,\kappa}}e^{-zn\tau}\delta_\tau(e^{-z\tau})^{-1}\left (\delta_\tau(e^{-z\tau})^{\alpha}+A_h(t_m)\right )^{-1}dz.
\end{equation*}
The $\upsilon_{2,h}$ can be divided into the following parts, i.e.,
\begin{equation*}
	\begin{aligned}
		\upsilon_{2,h}\leq & C\left \|\int_0^{t_m} \frac{\partial}{\partial t_m-s}H_h(t_m-s,t_m)(A_h(t_0)-A_h(s))U_h(s)ds\right .\\
		&\left .-\sum_{j=1}^{m} (H^{m-j}_{\tau,m}-H^{m-j-1}_{\tau,m})(A_h(t_m)-A_h(t_j))U^j_h\right \|_{L^2(\Omega)}=\sum_{i=1}^3\sum_{k=1}^{m}\upsilon_{2,i,k,h},
	\end{aligned}
\end{equation*}
where
\begin{equation*}
	\begin{aligned}
		\upsilon_{2,1,k,h}\leq& \left\|\left (H^{m-k}_{\tau,m}(A_h(t_m)-A_h(t_k))-H^{m-k-1}_{\tau,m}(A_h(t_m)-A_h(t_{k-1}))\right )\left (U_h(t_k)-U^k_h\right )\right\|_{L^2(\Omega)},\\
		\upsilon_{2,2,k,h}\leq& \left\|\bigg(\int_{t_{k-1}}^{t_k}\frac{\partial}{\partial (t_m-s)}\left (H_h(t_m-s,t_m)(A_h(t_m)-A_h(s))\right )ds\right .\\
		&\left .-\left (H^{m-k}_{\tau,m}(A_h(t_m)-A_h(t_k))-H^{m-k-1}_{\tau,m}(A_h(t_m)-A_h(t_{k-1}))\right )\bigg)U_h(t_k)\right\|_{L^2(\Omega)},\\
		\upsilon_{2,3,k,h}\leq& \left\|\int_{t_{k-1}}^{t_k}\frac{\partial}{\partial (t_m-s)}\left (H_h(t_m-s,t_m)(A_h(t_m)-A_h(s))\right )(U_h(s)-U_h(t_k))ds\right\|_{L^2(\Omega)}.\\
	\end{aligned}
\end{equation*}
As for $\upsilon_{2,1,k,h}$, it has
\begin{equation*}
	\begin{aligned}
		&\|H^{m-k}_{\tau,m}(A_h(t_m)-A_h(t_k))-H^{m-k-1}_{\tau,m}(A_h(t_m)-A_h(t_{k-1}))\|_{L^2(\Omega)}\\
		\leq& \|H^{m-k}_{\tau,m}(A_h(t_m)-A_h(t_k))-H^{m-k-1}_{\tau,m}(A_h(t_m)-A_h(t_{k}))\|\\
		&+\|H^{m-k-1}_{\tau,m}(A_h(t_k)-A_h(t_{k-1}))\|\leq \sigma_{1,k}+\sigma_{2,k}.
	\end{aligned}
\end{equation*}
Using Lemma \ref{Lemseriesest}, one can obtain
\begin{equation*}
	\begin{aligned}
		\sigma_{1,k}\leq C\tau(t_m-t_k)\left \|\int_{\Gamma^\tau_{\theta,\kappa}}e^{(m-k)\tau z}\frac{1-e^{-z\tau}}{\tau}A_h(t_m)\delta_\tau(e^{-z\tau})^{-1}(\delta_\tau(e^{-z\tau})^{\alpha}+A_h(t_m))^{-1}dz\right \|
		\leq C\tau.
	\end{aligned}
\end{equation*}
Similarly,
\begin{equation*}
	\sigma_{2,k}\leq C\tau\left \|\int_{\Gamma^\tau_{\theta,\kappa}}e^{(m-k-1)\tau z}A_h(t_m)\delta_\tau(e^{-z\tau})^{-1}(\delta_\tau(e^{-z\tau})^{\alpha}+A_h(t_m))^{-1}dz\right \|\leq C\tau.
\end{equation*}
Thus
\begin{equation*}
	\begin{aligned}
		\sum_{k=1}^{m}\upsilon_{2,1,k,h}\leq C\tau\sum_{k=1}^{m}\|U_h(t_k)-U^k_h\|_{L^2(\Omega)}.
	\end{aligned}
\end{equation*}
As for $\upsilon_{2,2,k,h}$, one can divide it into four parts, i.e.,
\begin{equation*}
	\begin{aligned}
		\upsilon_{2,2,k,h}
		\leq&C\left\|\int_{t_{k-1}}^{t_k}(A_h(t_m)-A_h(s))\left( \frac{\partial}{\partial (t_m-s)}H_h(t_m-s,t_m)-\left (H^{m-k}_{\tau,m}-H^{m-k-1}_{\tau,m}\right)/\tau\right)dsU_h(t_k) \right\|_{L^2(\Omega)}\\
		&+C\left\|\int_{t_{k-1}}^{t_k}(A_h(s)-A_h(t_{k-1}))\left (H^{m-k}_{\tau,m}-H^{m-k-1}_{\tau,m}\right)/\tau dsU_h(t_k) \right\|_{L^2(\Omega)}\\
		&+C\left\|\int_{t_{k-1}}^{t_k}\frac{\partial}{\partial s}A_h(s)\left( H_h(t_m-s,t_m)-H^{m-k}_{\tau,m}\right)ds U_h(t_k)\right\|_{L^2(\Omega)}\\
		&+C\left\|\int_{t_{k-1}}^{t_k}\left (\frac{\partial}{\partial s}A_h(s)-\frac{A_h(t_k)-A_h(t_{k-1})}{\tau}\right )H^{m-k}_{\tau,m}dsU_h(t_k) \right\|_{L^2(\Omega)}\leq \sum_{i=1}^{4}\vartheta_{i,k}.
	\end{aligned}
\end{equation*}
For the first part $\vartheta_{1,k}$, using Lemma \ref{Lemseriesest}, one has the estimate of the difference between $\frac{\partial}{\partial (t_m-s)}H_h(t_m-s,t_m)$ and $\frac{H^{m-k}_{\tau,m}-H^{m-k-1}_{\tau,m}}{\tau}$, i.e.,
\begin{equation*}
	\begin{aligned}
		&\left \| \frac{\partial}{\partial (t_m-s)}H_h(t_m-s,t_m)-\frac{H^{m-k}_{\tau,m}-H^{m-k-1}_{\tau,m}}{\tau}\right \|\\
		\leq &C\left \| \int_{\Gamma_{\theta,\kappa}}e^{z(t_m-s)}(z^{\alpha}+A_h(t_m))^{-1}dz-\int_{\Gamma^\tau_{\theta,\kappa}}e^{z(t_m-t_{k})}(\delta_\tau(e^{-z\tau})^{\alpha}+A_h(t_m))^{-1}dz\right \|\\
		\leq &C\tau(t_m-s)^{\alpha-2},
	\end{aligned}
\end{equation*}
which yields
\begin{equation*}
	\begin{aligned}
		\sum_{k=1}^{m}\vartheta_{1,k}\leq& C\tau\sum_{k=1}^{m} \int_{t_{k-1}}^{t_k}(t_m-s)^{\alpha-1}\|U_h(t_k)\|_{\dot{H}^2(\Omega)}ds.
	\end{aligned}
\end{equation*}
Similarly, one has
\begin{equation*}
	\left \|\frac{H^{m-k}_{\tau,m}-H^{m-k-1}_{\tau,m}}{\tau}\right \|\leq C\left \|\int_{\Gamma^\tau_{\theta,\kappa}}e^{z(t_m-t_{k-1})}e^{-z\tau}(\delta_\tau(e^{-z\tau})^{\alpha}+A_h(t_m))^{-1}dz\right \|\leq C(t_m-s)^{\alpha-1}.
\end{equation*}
Therefore one has
\begin{equation*}
	\begin{aligned}
		\sum_{k=1}^{m}\vartheta_{2,k}\leq& C\tau\sum_{k=1}^{m} \int_{t_{k-1}}^{t_k}(t_m-s)^{\alpha-1}\|U_h(t_k)\|_{\dot{H}^2(\Omega)}ds.
	\end{aligned}
\end{equation*}
Moreover, according to Lemma \ref{Lemseriesest}, there is
\begin{equation*}
	\begin{aligned}
		&\left \| H_h(t_m-s,t_m)-H^{m-k}_{\tau,m}\right \|\\
		\leq &C\left \| \int_{\Gamma_{\theta,\kappa}}e^{z(t_m-s)}z^{-1}(z^{\alpha}+A_h(t_m))^{-1}dz-\int_{\Gamma^\tau_{\theta,\kappa}}e^{z(t_m-t_{k})}\delta_\tau(e^{-z\tau})^{-1}(\delta_\tau(e^{-z\tau})^{\alpha}+A_h(t_m))^{-1}dz\right \|\\
		\leq &C\tau(t_m-s)^{\alpha-1},
	\end{aligned}
\end{equation*}
which leads to
\begin{equation*}
	\begin{aligned}
		\sum_{k=1}^{m}\vartheta_{3,k}\leq& C\tau\sum_{k=1}^{m} \int_{t_{k-1}}^{t_k}(t_m-s)^{\alpha-1}\|U_h(t_k)\|_{\dot{H}^2(\Omega)}ds.
	\end{aligned}
\end{equation*}
On the other hand, according to Lemma \ref{Lemseriesest}, one has
\begin{equation*}
	\begin{aligned}
		&\left \|H^{m-k}_{\tau,m}\right \|
		\leq C\left \| \int_{\Gamma^\tau_{\theta,\kappa}}e^{z(t_m-t_{k})}\delta_\tau(e^{-z\tau})^{-1}(\delta_\tau(e^{-z\tau})^{\alpha}+A_h(t_m))^{-1}dz\right \|
		\leq C(t_m-t_{k})^{\alpha}.
	\end{aligned}
\end{equation*}
Combining $\frac{1}{a^2(t)}\in C^2[0,T]$ leads to
\begin{equation}\label{eqqSigma}
	\begin{aligned}
		\sum_{k=1}^{m}\vartheta_{4,k}\leq& C\tau\sum_{k=1}^{m} \int_{t_{k-1}}^{t_k}(t_m-t_{k})^{\alpha}\|U_h(t_k)\|_{\dot{H}^2(\Omega)}ds.
	\end{aligned}
\end{equation}
The estimate \eqref{eqqSigma} together with Theorem \ref{thmregofUh} yield that
\begin{equation*}
	\begin{aligned}
		\sum_{k=1}^{m}\upsilon_{2,2,k,h}\leq C\tau\sum_{k=1}^{m} \int_{t_{k-1}}^{t_k}(t_m-s)^{\alpha-1}\|U_h(t_k)\|_{\dot{H}^2(\Omega)}ds\leq C\tau\|W_0\|_{L^2(\Omega)}.
	\end{aligned}
\end{equation*}
Using the condition $\frac{1}{a^2(t)}\in C^2[0,T]$, one can bound $\upsilon_{2,3,k,h}$ by
\begin{equation*}
	\begin{aligned}
	\upsilon_{2,3,k,h}\leq&C\left\|\int_{t_{k-1}}^{t_k}(A_h(t_m)-A_h(s))\frac{\partial}{\partial (t_m-s)}\left(H_h(t_m-s,t_m)\right )(U_h(s)-U_h(t_k))ds\right\|_{L^2(\Omega)}\\
	&+C\left\|\int_{t_{k-1}}^{t_k}H_h(t_m-s,t_m)\frac{\partial}{\partial s}\left(A_h(t_m)-A_h(s)\right )(U_h(s)-U_h(t_k))ds\right\|_{L^2(\Omega)}\\
	\leq& C\tau\int_{t_{k-1}}^{t_k}\left \|\frac{U_h(s)-U_h(t_k)}{\tau}\right \|_{L^2(\Omega)}ds.
	\end{aligned}
\end{equation*}
According to Theorem \ref{thmHolderU}, one has
\begin{equation*}
	\sum_{k=1}^{m}\upsilon_{2,3,k,h}\leq C\tau\|W_0\|_{L^2(\Omega)}.
\end{equation*}
Thus using discrete Gr\"{o}nwall inequality and taking $m=n$ result in

\begin{equation*}
	\|U^n_h-U_h(t_n)\|_{L^2(\Omega)}\leq Ct_n^{\alpha-1}\tau\|W_0\|_{L^2(\Omega)}.
\end{equation*}
\end{proof}

Next consider the estimate of $\uppercase\expandafter{\romannumeral2}$ defined in \eqref{eqromannumera}.
\begin{theorem}
If $W_0\in L^2(\Omega)$ and $\frac{1}{a^2(t)}\in C^2[0,T]$, then there exists
	\begin{equation*}
	\begin{aligned}
	\sum_{k=1}^{m}\uppercase\expandafter{\romannumeral2}_{1,k}\leq C\tau \|W_0\|_{L^2(\Omega)},
	\end{aligned}
	\end{equation*}
where $\uppercase\expandafter{\romannumeral2}_{1,k}$ is defined in \eqref{eqromannumera}.
\end{theorem}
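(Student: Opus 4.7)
The plan is to exploit the product structure of $\uppercase\expandafter{\romannumeral2}_{1,k}$, which factors as the operator norm $\bigl\|F^{m-k}_{\tau,m}(A_h(t_m)-A_h(t_k))-F^{m-k-1}_{\tau,m}(A_h(t_m)-A_h(t_{k-1}))\bigr\|$ acting on the previously estimated error $\|U_h(t_k)-U^k_h\|_{L^2(\Omega)}\leq C\tau t_k^{\alpha-1}\|W_0\|_{L^2(\Omega)}$. So the whole theorem reduces to bounding that operator norm sharply and then summing in $k$.

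For the operator norm, I would follow the same decomposition used in the previous theorem for $H^{m-k}_{\tau,m}$, namely
\begin{equation*}
\bigl\|F^{m-k}_{\tau,m}(A_h(t_m)-A_h(t_k))-F^{m-k-1}_{\tau,m}(A_h(t_m)-A_h(t_{k-1}))\bigr\|\leq \widetilde\sigma_{1,k}+\widetilde\sigma_{2,k},
\end{equation*}
where $\widetilde\sigma_{1,k}=\|(F^{m-k}_{\tau,m}-F^{m-k-1}_{\tau,m})(A_h(t_m)-A_h(t_k))\|$ and $\widetilde\sigma_{2,k}=\|F^{m-k-1}_{\tau,m}(A_h(t_k)-A_h(t_{k-1}))\|$. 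Since $A_h(t)=(1/a^2(t))A_h$, the operator $F^{m-k}_{\tau,m}$ commutes with $A_h$, and assumption \eqref{eqassuma} together with \eqref{eqassumae} converts each difference $A_h(t_m)-A_h(t_j)$ into $a$ scalar factor of size $|t_m-t_j|$ times $A_h$. After this reduction, both pieces are controlled via the contour representation of $F^i_{\tau,m}$ and Lemma \ref{Lemseriesest}: for $\widetilde\sigma_{1,k}$ the extra factor $1-e^{-z\tau}$ contributes $\tau|z|$, so that
$\|A_h(F^{m-k}_{\tau,m}-F^{m-k-1}_{\tau,m})\|\leq C\tau(t_m-t_{k+1}+\tau)^{-\alpha-1}$, and combined with the $(t_m-t_k)$ factor gives $\widetilde\sigma_{1,k}\leq C\tau(t_m-t_k+\tau)^{-\alpha}$; for $\widetilde\sigma_{2,k}$ one uses directly \eqref{eqFntmest1} in the form $\|A_h(t_m)F^{m-k-1}_{\tau,m}\|\leq C(t_m-t_{k}+\tau)^{-\alpha}$ together with $|1/a^2(t_k)-1/a^2(t_{k-1})|\leq C\tau$ to obtain $\widetilde\sigma_{2,k}\leq C\tau(t_m-t_k+\tau)^{-\alpha}$.

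Putting the two ingredients together yields
\begin{equation*}
\sum_{k=1}^m\uppercase\expandafter{\romannumeral2}_{1,k}\leq C\tau\|W_0\|_{L^2(\Omega)}\,\tau\sum_{k=1}^m (t_m-t_k+\tau)^{-\alpha}\,t_k^{\alpha-1}.
\end{equation*}
The discrete sum is a Riemann-type approximation of the Beta integral $\int_0^{t_m}(t_m-s)^{-\alpha}s^{\alpha-1}\,ds=B(1-\alpha,\alpha)$, which is bounded independently of $m$ and $\tau$; this is a standard estimate and yields the claimed $C\tau\|W_0\|_{L^2(\Omega)}$ bound after taking $m=n$.

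The main obstacle is the sharp operator estimate $\widetilde\sigma_{1,k}\leq C\tau(t_m-t_k+\tau)^{-\alpha}$. Naively bounding $\|A_h(F^{m-k}_{\tau,m}-F^{m-k-1}_{\tau,m})\|$ with a fixed contour gives a non-integrable singularity; the trick is to choose $\kappa\sim (t_m-t_{k+1}+\tau)^{-1}$ in Lemma \ref{Lemseriesest}, exploit $|1-e^{-z\tau}|\leq C\tau|z|$ together with $|\delta_\tau(e^{-z\tau})^{\alpha-1}A_h(t_m)(\delta_\tau(e^{-z\tau})^\alpha+A_h(t_m))^{-1}|\leq C|z|^{\alpha-1}$, and then absorb the remaining $(t_m-t_k)$ factor from the coefficient difference to raise the negative power of $(t_m-t_k+\tau)$ from $\alpha+1$ to $\alpha$. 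Once this is in place, the rest is bookkeeping.
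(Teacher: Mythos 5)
Your proposal matches the paper's proof essentially step for step: the same triangle-inequality split of the operator difference into $\|(F^{m-k}_{\tau,m}-F^{m-k-1}_{\tau,m})(A_h(t_m)-A_h(t_k))\|$ and $\|F^{m-k-1}_{\tau,m}(A_h(t_k)-A_h(t_{k-1}))\|$ (the paper's $\varrho_{1,k}$ and $\varrho_{2,k}$), the same contour estimates via Lemma \ref{Lemseriesest} and \eqref{eqFntmest1} giving $C\tau(t_m-t_k+\tau)^{-\alpha}$ for each piece, and the same pairing with $\|U^k_h-U_h(t_k)\|_{L^2(\Omega)}\leq C\tau t_k^{\alpha-1}\|W_0\|_{L^2(\Omega)}$ followed by the discrete Beta-type summation. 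Your write-up is in fact slightly more explicit than the paper's (which leaves the final summation and the role of the $U$-error bound implicit), but the route is the same.
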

\begin{proof}
By triangle inequality, we can divide it into two parts, i.e.,
\begin{equation*}
	\begin{aligned}
		&\|F^{m-k}_{\tau,m}(A_h(t_m)-A_h(t_k))-F^{m-k-1}_{\tau,m}(A_h(t_m)-A_h(t_{k-1}))\|\\
		\leq& \|F^{m-k}_{\tau,m}(A_h(t_m)-A_h(t_k))-F^{m-k-1}_{\tau,m}(A_h(t_m)-A_h(t_{k}))\|\\
		&+\|F^{m-k-1}_{\tau,m}(A_h(t_k)-A_h(t_{k-1}))\|\leq \varrho_{1,k}+\varrho_{2,k}.
	\end{aligned}
\end{equation*}
The fact $|\frac{1-e^{-z\tau}}{\tau}|\leq C|z|$ and Lemma \ref{Lemseriesest} show
\begin{equation*}
	\begin{aligned}
		 \varrho_{1,k}\leq C\tau(t_m-t_k)\left \|\int_{\Gamma^\tau_{\theta,\kappa}}e^{(m-k)\tau z}\frac{e^{-z\tau}-1}{\tau}A_h(t_m)\delta_\tau(e^{-z\tau})^{\alpha-1}(\delta_\tau(e^{-z\tau})^{\alpha}+A_h(t_m))^{-1}dz\right \|
		\leq C(t_m-s)^{-\alpha}\tau.
	\end{aligned}
\end{equation*}
Similarly
\begin{equation*}
	 \varrho_{2,k}\leq C\tau\left \|\int_{\Gamma^\tau_{\theta,\kappa}}e^{(m-k-1)\tau z}A_h(t_m)\delta_\tau(e^{-z\tau})^{\alpha-1}(\delta_\tau(e^{-z\tau})^{\alpha}+A_h(t_m))^{-1}dz\right \|\leq C(t_m-s)^{-\alpha}\tau.
\end{equation*}
Thus
\begin{equation*}
	\sum_{k=1}^{m}\uppercase\expandafter{\romannumeral2}_{1,k}\leq C\tau\|W_0\|_{L^2(\Omega)}.
\end{equation*}
\end{proof}
\begin{theorem}
If $W_0\in \dot{H}^\epsilon(\Omega)$ and $\frac{1}{a^2(t)}\in C^2[0,T]$, then there exists
	\begin{equation*}
	\begin{aligned}
	\sum_{k=1}^{m}\uppercase\expandafter{\romannumeral2}_{2,k}\leq C\tau \|W_0\|_{\dot{H}^{\epsilon}(\Omega)},
	\end{aligned}
	\end{equation*}
where $\uppercase\expandafter{\romannumeral2}_{2,k}$ is defined in \eqref{eqromannumera}.
\end{theorem}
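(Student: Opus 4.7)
The proof will mirror the estimate for $\upsilon_{2,2,k,h}$ in the preceding theorem, with $H_h$ replaced by $F_h$ and $H^{m-k}_{\tau,m}$ by $F^{m-k}_{\tau,m}$. First I would apply the product rule to $\int_{t_{k-1}}^{t_k}\partial_{t_m-s}[F_h(t_m-s,t_m)(A_h(t_m)-A_h(s))]\,ds$ and to its discrete counterpart $F^{m-k}_{\tau,m}(A_h(t_m)-A_h(t_k))-F^{m-k-1}_{\tau,m}(A_h(t_m)-A_h(t_{k-1}))$, splitting the difference into four pieces $\vartheta_{1,k},\ldots,\vartheta_{4,k}$ in direct analogy with the earlier decomposition: (i) $F_h'(t_m-s,t_m)$ against its discrete difference quotient, (ii) the $s$-variation of $A_h(s)$ on $[t_{k-1},t_k]$, (iii) $F_h(t_m-s,t_m)$ against $F^{m-k}_{\tau,m}$, and (iv) $\partial_s A_h(s)$ against $(A_h(t_k)-A_h(t_{k-1}))/\tau$. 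Piece (iv) is where $\frac{1}{a^2(t)}\in C^2[0,T]$ is used, providing $|\partial_s A_h(s)-(A_h(t_k)-A_h(t_{k-1}))/\tau|\le C\tau$.

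The key new kernel bounds, derived via contour deformation and Lemma \ref{Lemseriesest} exactly as in the preceding proof, are $\|A_h^{-1}[F_h'(t_m-s,t_m)-(F^{m-k}_{\tau,m}-F^{m-k-1}_{\tau,m})/\tau]\|\le C\tau(t_m-s)^{\alpha-2}$, $\|A_h^{-1}(F_h(t_m-s,t_m)-F^{m-k}_{\tau,m})\|\le C\tau(t_m-s)^{\alpha-1}$, and $\|A_h^{-1}(F^{m-k}_{\tau,m}-F^{m-k-1}_{\tau,m})/\tau\|\le C(t_m-s)^{\alpha-1}$, together with $\|F^{m-k}_{\tau,m}\|\le C$ from Lemma \ref{lemestEFh}. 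Since $F_h$ carries an extra factor $z^\alpha$ in its Bromwich integrand compared with $H_h$, I would exploit the factorization $A_h(s)=(a^2(t_m)/a^2(s))A_h(t_m)$ and the commutativity of $A_h(s)$, $A_h(t_m)$ and all the resolvent-based kernels (all functions of $-\Delta_h$) to move one $A_h$ factor from $(A_h(t_m)-A_h(s))$ onto $U_h(t_k)$; combined with $|1-a^2(t_m)/a^2(s)|\le C|t_m-s|$ from \eqref{eqassuma} and the $A_h^{-1}$ smoothing above, this turns the singular kernel into an integrable bound of the form $C\tau(t_m-s)^{\alpha-1}\|A_h U_h(t_k)\|_{L^2(\Omega)}$.

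The final step invokes Theorem \ref{thmregofUh}, which yields $\|A_h U_h(t_k)\|_{L^2(\Omega)}=\|U_h(t_k)\|_{\dot{H}^2(\Omega)}\le C\|W_0\|_{L^2(\Omega)}\le C\|W_0\|_{\dot{H}^\epsilon(\Omega)}$. Summing over $k=1,\ldots,m$, each of $\vartheta_{1,k},\vartheta_{2,k},\vartheta_{3,k}$ contributes at most $C\tau\int_{t_{k-1}}^{t_k}(t_m-s)^{\alpha-1}\,ds\|W_0\|_{\dot{H}^\epsilon(\Omega)}$, and the sum is bounded by $C\tau t_m^\alpha\|W_0\|_{\dot{H}^\epsilon(\Omega)}$; for $\vartheta_{4,k}$ the $C^2$ Taylor estimate together with $\|F^{m-k}_{\tau,m}\|\le C$ gives $C\tau^2\sum_k\|U_h(t_k)\|_{L^2(\Omega)}\le C\tau\|W_0\|_{\dot{H}^\epsilon(\Omega)}$, since $m\tau\le T$. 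The main obstacle I anticipate is the first kernel bound for $A_h^{-1}[F_h'-(F^{m-k}_{\tau,m}-F^{m-k-1}_{\tau,m})/\tau]$: one must expand $F_h'$ as the Bromwich integral with integrand $z^\alpha(z^\alpha+A_h(t_m))^{-1}$, apply the resolvent identity $A_h^{-1}z^\alpha(z^\alpha+A_h)^{-1}=A_h^{-1}-(z^\alpha+A_h)^{-1}$ to absorb the extra $z^\alpha$, and then match the three-piece contour split (the tail $\Gamma_{\theta,\kappa}\setminus\Gamma^\tau_{\theta,\kappa}$, the resolvent error $\delta_\tau(e^{-z\tau})^\alpha-z^\alpha$, and the exponential error) with Lemma \ref{Lemseriesest} so that each sub-integral contributes at most $\tau(t_m-s)^{\alpha-2}$, choosing $\kappa\le 1/T$ to keep the tail piece negligible. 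Once this technical estimate is secured, the remainder of the argument is a direct transcription of the bookkeeping in the $\upsilon_{2,2,k,h}$ proof.
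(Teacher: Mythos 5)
Your four-way decomposition of $\uppercase\expandafter{\romannumeral2}_{2,k}$ is exactly the paper's ($\ell_{1,k},\dots,\ell_{4,k}$), and you correctly identify $\vartheta_{4,k}$ as the place where $\frac{1}{a^2(t)}\in C^2[0,T]$ enters. The gap is in the bookkeeping of powers of $A_h$ in the first three pieces. Take $\vartheta_{1,k}$: writing $A_h(t_m)-A_h(s)=(1-a^2(t_m)/a^2(s))A_h(t_m)$ and commuting, the term is
\begin{equation*}
\bigl(1-a^2(t_m)/a^2(s)\bigr)\,D(s)\,A_h(t_m)U_h(t_k),\qquad D(s)=\frac{\partial}{\partial(t_m-s)}F_h(t_m-s,t_m)-\frac{F^{m-k}_{\tau,m}-F^{m-k-1}_{\tau,m}}{\tau}.
\end{equation*}
Your kernel bound $\|A_h^{-1}D(s)\|\le C\tau(t_m-s)^{\alpha-2}$ is plausible (it is an $E$-type estimate, since $A_h^{-1}F_h'=-E_h$), but to use it you must pair it with $A_h^{2}U_h(t_k)$, i.e.\ you need $\|A_hU_h(t_k)\|_{\dot{H}^2(\Omega)}\le C\|W_0\|_{\dot{H}^2(\Omega)}$, which is not available under the hypothesis $W_0\in\dot{H}^\epsilon(\Omega)$. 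If instead you move only the one $A_h$ coming from the coefficient onto $U_h(t_k)$, you are left with the \emph{unweighted} bound $\|D(s)\|\le C\tau(t_m-s)^{-2}$ (the Bromwich integrand of $F_h'$ is $z^{\alpha}(z^{\alpha}+A_h)^{-1}$, of unit size), and after the gain $|1-a^2(t_m)/a^2(s)|\le C(t_m-s)$ from \eqref{eqassuma} the kernel $\tau(t_m-s)^{-1}$ is not summable over $k$ (logarithmic divergence near $s=t_m$). You cannot simultaneously have the $(t_m-s)^{\alpha-2}$ decay and pay only $\|A_hU_h(t_k)\|_{L^2(\Omega)}$; your claimed intermediate bound $C\tau(t_m-s)^{\alpha-1}\|A_hU_h(t_k)\|_{L^2(\Omega)}$ double-counts the single available factor of $A_h$. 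The same mismatch affects $\vartheta_{2,k}$ and $\vartheta_{3,k}$.

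The paper threads this needle with a fractional power: it proves $\|A_h(t_m)^{-\epsilon/2}D(s)\|\le C\tau(t_m-s)^{\alpha\epsilon/2-2}$ (and the analogous $-\epsilon/2$-weighted bounds for $F_h-F^{m-k}_{\tau,m}$ and $(F^{m-k}_{\tau,m}-F^{m-k-1}_{\tau,m})/\tau$), which after the factor $(t_m-s)$ yields the just-integrable weight $(t_m-s)^{\alpha\epsilon/2-1}$ paired with $\|A_h(t_m)^{1+\epsilon/2}U_h(t_k)\|_{L^2(\Omega)}=\|A_h^{\epsilon/2}U_h(t_k)\|_{\dot{H}^2(\Omega)}\le C\|W_0\|_{\dot{H}^\epsilon(\Omega)}$ --- precisely the last estimate of Theorem \ref{thmregofUh} with $\eta=\epsilon$, and the reason the hypothesis is $W_0\in\dot{H}^\epsilon(\Omega)$ rather than $W_0\in L^2(\Omega)$. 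A minor further slip: in $\vartheta_{4,k}$ the operator $\frac{\partial}{\partial s}A_h(s)$ still carries a full $A_h$, so the natural bound is $C\tau^2\|U_h(t_k)\|_{\dot{H}^2(\Omega)}$ rather than $C\tau^2\|U_h(t_k)\|_{L^2(\Omega)}$; this is harmless, since $\|U_h(t_k)\|_{\dot{H}^2(\Omega)}\le C\|W_0\|_{L^2(\Omega)}$ by Theorem \ref{thmregofUh}, but it should be stated correctly.
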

\begin{proof}
By triangle inequality, there exists
\begin{equation*}
	\begin{aligned}
		\uppercase\expandafter{\romannumeral2}_{2,k}\leq&C\left\|\bigg(\int_{t_{k-1}}^{t_k}\frac{\partial}{\partial (t_m-s)}\left (F_h(t_m-s,t_m)(A_h(t_m)-A_h(s))\right )ds\right .\\
		&\left .-\left (F^{m-k}_{\tau,m}(A_h(t_m)-A_h(t_k))-F^{m-k-1}_{\tau,m}(A_h(t_m)-A_h(t_{k-1}))\right )\bigg)U_h(t_k)\right\|_{L^2(\Omega)}\\
		\leq&C\left\|\int_{t_{k-1}}^{t_k}(A_h(t_m)-A_h(s))\left( \frac{\partial}{\partial (t_m-s)}F_h(t_m-s,t_m)-\left (F^{m-k}_{\tau,m}-F^{m-k-1}_{\tau,m}\right)/\tau\right)dsU_h(t_k) \right\|_{L^2(\Omega)}\\
		&+C\left\|\int_{t_{k-1}}^{t_k}(A_h(s)-A_h(t_{k-1}))\left (F^{m-k}_{\tau,m}-F^{m-k-1}_{\tau,m}\right)/\tau dsU_h(t_k) \right\|_{L^2(\Omega)}\\
		&+C\left\|\int_{t_{k-1}}^{t_k}\frac{\partial}{\partial s}A_h(s)\left( F_h(t_m-s,t_m)-F^{m-k}_{\tau,m}\right)ds U_h(t_k)\right\|_{L^2(\Omega)}\\
		&+C\left\|\int_{t_{k-1}}^{t_k}\left (\frac{\partial}{\partial s}A_h(s)-\frac{A_h(t_k)-A_h(t_{k-1})}{\tau}\right )F^{m-k}_{\tau,m}dsU_h(t_k) \right\|_{L^2(\Omega)}\leq \sum_{i=1}^{4}\ell_{i,k}.
	\end{aligned}
\end{equation*}
From Lemma \ref{lemestEFh}, one has
\begin{equation*}
	\begin{aligned}
		&\left \| A_h(t_m)^{-\epsilon/2}\frac{\partial}{\partial (t_m-s)}F_h(t_m-s,t_m)-A_h(t_m)^{-\epsilon/2}\frac{F^{m-k}_{\tau,m}-F^{m-k-1}_{\tau,m}}{\tau}\right \|\\
		\leq &C\left \| \int_{\Gamma_{\theta,\kappa}}e^{z(t_m-s)}A_h(t_m)^{1-\epsilon/2}(z^{\alpha}+A_h(t_m))^{-1}dz-\int_{\Gamma^\tau_{\theta,\kappa}}e^{z(t_m-t_{k})}A_h(t_m)^{1-\epsilon/2}(\delta_\tau(e^{-z\tau})^{\alpha}+A_h(t_m))^{-1}dz\right \|\\
		&+C\left \| A_h(t_m)^{-\epsilon/2}\left (\int_{\Gamma_{\theta,\kappa}}e^{z(t_m-s)}\mathbf{I}dz-\int_{\Gamma^\tau_{\theta,\kappa}}e^{z(t_m-t_{k})}\mathbf{I}dz\right )\right \|\\
		\leq&C\left \| \int_{\Gamma_{\theta,\kappa}\backslash\Gamma^\tau_{\theta,\kappa}}e^{z(t_m-s)}A_h(t_m)^{1-\epsilon/2}(z^{\alpha}+A_h(t_m))^{-1}dz\right \|\\
		&+C\left \|\int_{\Gamma^\tau_{\theta,\kappa}}e^{z(t_m-s)}(1-e^{z(s-t_k)})A_h(t_m)^{1-\epsilon/2}(\delta_\tau(e^{-z\tau})^{\alpha}+A_h(t_m))^{-1}dz\right \|\\
		&+C\left \|\int_{\Gamma^\tau_{\theta,\kappa}}e^{z(t_m-s)}A_h(t_m)^{1-\epsilon/2}((z^{\alpha}+A_h(t_m))^{-1}-(\delta_\tau(e^{-z\tau})^{\alpha}+A_h(t_m))^{-1})dz\right \|\\
		\leq &C\tau(t_m-s)^{\alpha\epsilon/2-2},
	\end{aligned}
\end{equation*}
which implies
\begin{equation*}
	\begin{aligned}
		\sum_{k=1}^{m}\ell_{1,k}\leq& C\tau\sum_{k=1}^{m} \int_{t_{k-1}}^{t_k}(t_m-s)^{\alpha\epsilon/2-1}\|A_h(t_m)^{\epsilon/2}U_h(t_k)\|_{\dot{H}^2(\Omega)}ds.
	\end{aligned}
\end{equation*}
Similarly,
\begin{equation*}
\begin{aligned}
	\left \|A_h(t_m)^{-\epsilon/2}\frac{F^{m-k}_{\tau,m}-F^{m-k-1}_{\tau,m}}{\tau}\right \|&\leq \left \|\int_{\Gamma^\tau_{\theta,\kappa}}e^{z(t_m-t_{k-1})}e^{-z\tau}A_h(t_m)^{-\epsilon/2}\delta_\tau(e^{-z\tau})^{\alpha}(\delta_\tau(e^{-z\tau})^{\alpha}+A_h(t_m))^{-1}dz\right \|\\
	&\leq C(t_m-s)^{\alpha\epsilon/2-1}.
\end{aligned}
\end{equation*}
Therefore $\sum_{k=1}^{m}\ell_{2,k}$ can be bounded as
\begin{equation*}
	\begin{aligned}
		\sum_{k=1}^{m}\ell_{2,k}\leq& C\tau\sum_{k=1}^{m} \int_{t_{k-1}}^{t_k}(t_m-s)^{\alpha\epsilon/2-1}\|A_h(t_m)^{\epsilon/2}U_h(t_k)\|_{\dot{H}^2(\Omega)}ds.
	\end{aligned}
\end{equation*}
On the other hand, one can get
\begin{equation*}
	\begin{aligned}
		&\left \| A_h(t_m)^{-\epsilon/2}F_h(t_m-s,t_m)-A_h(t_m)^{-\epsilon/2}F^{m-k}_{\tau,m}\right \|\\
		\leq &C\left \| \int_{\Gamma_{\theta,\kappa}}e^{z(t_m-s)}A_h(t_m)^{1-\epsilon/2}z^{-1}(z^{\alpha}+A_h(t_m))^{-1}dz\right .\\
		&\left .-\int_{\Gamma^\tau_{\theta,\kappa}}e^{z(t_m-t_{k})}A_h(t_m)^{1-\epsilon/2}\delta_\tau(e^{-z\tau})^{-1}(\delta_\tau(e^{-z\tau})^{\alpha}+A_h(t_m))^{-1}dz\right \|\\
		&+C\left \|A_h(t_m)^{-\epsilon/2}\left ( \int_{\Gamma_{\theta,\kappa}}e^{z(t_m-s)}z^{-1}dz\right .\left .-\int_{\Gamma^\tau_{\theta,\kappa}}e^{z(t_m-t_{k})}\delta_\tau(e^{-z\tau})^{-1}dz\right )\right \|\\
		\leq &C\tau(t_m-s)^{\alpha\epsilon/2-1},
	\end{aligned}
\end{equation*}
which leads to
\begin{equation*}
	\begin{aligned}
		\sum_{k=1}^{m}\ell_{3,k}\leq& C\tau\sum_{k=1}^{m} \int_{t_{k-1}}^{t_k}(t_m-s)^{\alpha\epsilon/2-1}\|A_h(t_m)^{\epsilon/2}U_h(t_k)\|_{\dot{H}^2(\Omega)}ds.
	\end{aligned}
\end{equation*}
Next, using
\begin{equation*}
	\begin{aligned}
		&\left \|F^{m-k}_{\tau,m}\right \|
		\leq C\left \| \int_{\Gamma^\tau_{\theta,\kappa}}e^{z(t_m-t_{k})}\delta_\tau(e^{-z\tau})^{\alpha-1}(\delta_\tau(e^{-z\tau})^{\alpha}+A_h(t_m))^{-1}dz\right \|
		\leq C,
	\end{aligned}
\end{equation*}
there exists
\begin{equation*}
	\begin{aligned}
		\sum_{k=1}^{m}\ell_{4,k}\leq& C\tau\sum_{k=1}^{m} \int_{t_{k-1}}^{t_k}\|U_h(t_k)\|_{\dot{H}^2(\Omega)}ds.
	\end{aligned}
\end{equation*}
Thus, by Lemma \ref{thmregofUh}, it has
\begin{equation*}
	\begin{aligned}
		\sum_{k=1}^{m}\uppercase\expandafter{\romannumeral2}_{2,k}\leq C\tau\sum_{k=1}^{m} \int_{t_{k-1}}^{t_k}(t_m-s)^{\alpha\epsilon/2-1}\|A_h(t_m)^{\epsilon/2}U_h(t_k)\|_{\dot{H}^2(\Omega)}ds\leq C\tau \|W_0\|_{\dot{H}^{\epsilon}(\Omega)}.
	\end{aligned}
\end{equation*}
\end{proof}
\begin{theorem}
If $W_0\in \dot{H}^\epsilon(\Omega)$ and $\frac{1}{a^2(t)}\in C^2[0,T]$, then there exists
	\begin{equation*}
	\begin{aligned}
	\sum_{k=1}^{m}\uppercase\expandafter{\romannumeral2}_{3,k}\leq C\tau \|W_0\|_{\dot{H}^{\epsilon}(\Omega)},
	\end{aligned}
	\end{equation*}
where $\uppercase\expandafter{\romannumeral2}_{3,k}$ is defined in \eqref{eqromannumera}.
\end{theorem}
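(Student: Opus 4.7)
The plan is to mirror the treatment of $\upsilon_{2,3,k,h}$ in the proof of the preceding theorem on $U_h-U_h^n$, with $H_h$ replaced by $F_h$ and the regularity drawn from the $\dot{H}^\epsilon$-based estimate in Theorem \ref{thmHolderU}. First I would apply the product rule to the derivative $\frac{\partial}{\partial(t_m-s)}=-\frac{\partial}{\partial s}$, writing
\begin{equation*}
\frac{\partial}{\partial(t_m-s)}\bigl[F_h(t_m-s,t_m)(A_h(t_m)-A_h(s))\bigr]=F_h'(t_m-s,t_m)(A_h(t_m)-A_h(s))+F_h(t_m-s,t_m)\tfrac{\partial A_h(s)}{\partial s},
\end{equation*}
which splits $\uppercase\expandafter{\romannumeral2}_{3,k}$ into two pieces $\mathcal{A}_k+\mathcal{B}_k$.

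For $\mathcal{A}_k$ I would combine $\|F_h'(t,t_m)\|\leq Ct^{-1}$ (Lemma \ref{lemestEFh} with $\beta=0$) with $\|(A_h(t_m)-A_h(s))v\|_{L^2(\Omega)}\leq C(t_m-s)\|v\|_{\dot{H}^2(\Omega)}$ from \eqref{eqassumae}; the two temporal factors cancel pointwise, leaving the integrand bounded by $C\|U_h(s)-U_h(t_k)\|_{\dot{H}^2(\Omega)}$. For $\mathcal{B}_k$, the hypothesis $\tfrac{1}{a^2(t)}\in C^2[0,T]$ gives $\tfrac{\partial A_h(s)}{\partial s}=c(s) A_h$ with $c(\cdot)$ bounded, so combined with $\|F_h(t,t_m)\|\leq C$ the integrand is again controlled by a multiple of $\|U_h(s)-U_h(t_k)\|_{\dot{H}^2(\Omega)}$. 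At this stage I would invoke the second inequality of Theorem \ref{thmHolderU} with $\gamma_2=1$, adapted by the same contour argument from shift $\tau$ to the sub-step shift $t_k-s\leq\tau$, which yields
\begin{equation*}
\|U_h(s)-U_h(t_k)\|_{\dot{H}^2(\Omega)}=\|A_h(U_h(s)-U_h(t_k))\|_{L^2(\Omega)}\leq C(t_k-s)\,t_k^{\alpha\epsilon/2-1}\|W_0\|_{\dot{H}^\epsilon(\Omega)}.
\end{equation*}

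Integrating over $s\in[t_{k-1},t_k]$ gives $\uppercase\expandafter{\romannumeral2}_{3,k}\leq C\tau^2\, t_k^{\alpha\epsilon/2-1}\|W_0\|_{\dot{H}^\epsilon(\Omega)}$, and summing via $\tau\sum_{k=1}^{m}t_k^{\alpha\epsilon/2-1}\leq CT^{\alpha\epsilon/2}/(\alpha\epsilon/2)$ (valid since $\alpha\epsilon/2>0$) produces the claimed bound $C\tau\|W_0\|_{\dot{H}^\epsilon(\Omega)}$. The main obstacle is the delicate cancellation between the $(t_m-s)^{-1}$ singularity coming from $F_h'$ and the Lipschitz gain $(t_m-s)$ from \eqref{eqassumae}: if either exponent were worse, the singular part $\mathcal{A}_k$ would fail to sum to $O(\tau)$. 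A secondary technicality is checking that Theorem \ref{thmHolderU}'s Hölder bound, stated for shift $\tau$, transfers to the sub-step shift $t_k-s$; this is immediate from the same estimate $|1-e^{-z\sigma}|/\sigma^{\gamma_2}\leq C|z|^{\gamma_2}$ used in its proof, now with $\sigma=t_k-s$.
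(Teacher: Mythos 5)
Your proposal is correct and follows essentially the same route as the paper: a product-rule split of $\frac{\partial}{\partial(t_m-s)}\left[F_h(t_m-s,t_m)(A_h(t_m)-A_h(s))\right]$, cancellation of the $(t_m-s)^{-1}$ singularity of $F_h'$ against the Lipschitz factor from \eqref{eqassumae}, reduction to $\left\|A_h(t_m)\frac{U_h(s)-U_h(t_k)}{\tau}\right\|_{L^2(\Omega)}$, and an application of the second bound of Theorem \ref{thmHolderU} with $\gamma_2=1$ followed by summation of $\tau\int_{t_{k-1}}^{t_k}s^{\alpha\epsilon/2-1}ds$. Your explicit remark on transferring the H\"older bound from shift $\tau$ to the sub-step shift $t_k-s$ is a point the paper passes over silently, but it does not change the argument.
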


\begin{proof}
According to Theorem \ref{thmHolderU},  $\uppercase\expandafter{\romannumeral2}_{3,k}$ can be bounded as
	\begin{equation*}
		\begin{aligned}
			\uppercase\expandafter{\romannumeral2}_{3,k}\leq& \left\|\int_{t_{k-1}}^{t_k}(A_h(t_m)-A_h(s))\frac{\partial}{\partial (t_m-s)}\left (F_h(t_m-s,t_m)\right )(U_h(s)-U_h(t_k))ds\right \|_{L^2(\Omega)}\\
		&+\left\|\int_{t_{k-1}}^{t_k}F_h(t_m-s,t_m)\frac{\partial}{\partial (t_m-s)}\left((A_h(t_m)-A_h(s))\right)(U_h(s)-U_h(t_k))ds\right \|_{L^2(\Omega)}\\
			\leq& C\tau	\int_{t_{k-1}}^{t_k}\left\|A_h(t_m)\frac{U_h(s)-U_h(t_k)}{\tau}\right \|_{L^2(\Omega)}ds\\
			\leq& C\tau\int_{t_{k-1}}^{t_k}	s^{\alpha\epsilon/2-1}\|W_0\|_{\dot{H}^{\epsilon}(\Omega)}ds.
		\end{aligned}
	\end{equation*}
Summing $k$ from $1$ to $n$ leads to the desired estimate.
\end{proof}
Thus the error estimate of the fully discrete scheme when $f=0$ is obtained.
\begin{theorem}\label{thmfullhom}
	Let $W_h$ and $W^n_h$ be the solutions of Eqs. \eqref{eqsemischeme} and \eqref{eqfullscheme} respectively. If $\frac{1}{a^2(t)}\in C^2[0,T]$, $W_0\in \dot{H}^\epsilon(\Omega)$, and $f=0$, then there holds
	\begin{equation*}
	\|W_h(t_n)-W^n_h\|\leq C\tau t_n^{\alpha\epsilon-1}\|W_0\|_{\dot{H}^\epsilon(\Omega)}.
	\end{equation*}
\end{theorem}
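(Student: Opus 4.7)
The plan is to combine the decomposition $\|e^n_h\|_{L^2(\Omega)} \leq \mathrm{I} + \mathrm{II}$ established at the beginning of Section 4.2 with the already-proved bounds on each piece; essentially no genuinely new work is needed beyond an assembly argument and a rewriting step that has been used elsewhere in the paper. Throughout I exploit the fact, repeatedly used in Section 4, that $m\in[0,L]$ in the semidiscrete parameter $A_h(t_m)$ is arbitrary, so at the end one can set $m=n$.

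First I would apply Theorem \ref{thmhomI} directly to control the initial-value contribution:
\[
\mathrm{I} \;\leq\; C\tau\, t_n^{\alpha\epsilon/2-1}\|W_0\|_{\dot{H}^\epsilon(\Omega)}.
\]
This is the essential term and its weakly singular temporal weight $t_n^{\alpha\epsilon/2-1}$ reflects the low regularity of $W_0$. Next, for $\mathrm{II}$ I would specialize to $t_n=t_m$ and use the rewriting \eqref{eqromannumera}, which, after integration by parts in time on the continuous side and summation by parts on the discrete side, converts the Riemann--Liouville derivative into the Riemann--Liouville integral $U_h$ and displays $\mathrm{II}\leq\sum_{k=1}^{n}(\uppercase\expandafter{\romannumeral2}_{1,k}+\uppercase\expandafter{\romannumeral2}_{2,k}+\uppercase\expandafter{\romannumeral2}_{3,k})$. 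The three immediately preceding theorems give
\[
\sum_{k=1}^{n}\uppercase\expandafter{\romannumeral2}_{i,k}\leq C\tau\|W_0\|_{\dot{H}^\epsilon(\Omega)}\quad(i=1,2,3),
\]
under the standing assumptions $W_0\in\dot{H}^\epsilon(\Omega)$ and $1/a^2(t)\in C^2[0,T]$ (the first of these theorems actually needs only $W_0\in L^2(\Omega)\supset\dot{H}^\epsilon(\Omega)$). Adding these three bounds yields $\mathrm{II}\leq C\tau\|W_0\|_{\dot{H}^\epsilon(\Omega)}$.

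Finally I would assemble the two estimates. Since $t_n\leq T$, one has $1\leq (T/t_n)^{1-\alpha\epsilon/2}$, so the $O(\tau)$ bound on $\mathrm{II}$ can be absorbed into a bound of the form $C\tau\,t_n^{\alpha\epsilon/2-1}\|W_0\|_{\dot{H}^\epsilon(\Omega)}$, exactly as done at the end of the proof of Theorem \ref{thmHolderG}. Combining with the bound on $\mathrm{I}$ gives the claimed estimate. The genuine obstacles were all cleared upstream: the Hölder-in-time estimate Theorem \ref{thmHolderU} for $U_h$ (which controls $\uppercase\expandafter{\romannumeral2}_{3,k}$), the difference bounds on the discrete resolvents $F^{m-k}_{\tau,m}-F^{m-k-1}_{\tau,m}$ and $H^{m-k}_{\tau,m}-H^{m-k-1}_{\tau,m}$ coming from Lemma \ref{Lemseriesest} via deformation of the contour $\Gamma_{\theta,\kappa}$ to $\Gamma^\tau_{\theta,\kappa}$, and the $C^2$ regularity of $1/a^2(t)$ which is what produces the Lipschitz control of $(A_h(t_k)-A_h(t_{k-1}))/\tau - \partial_s A_h(s)$ needed to treat $\ell_{4,k}$ in the $\uppercase\expandafter{\romannumeral2}_{2,k}$ estimate. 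Given these, the statement itself is a two-line consequence.
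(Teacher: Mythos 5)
Your assembly is exactly the paper's own (implicit) proof: Theorem \ref{thmhomI} controls $\mathrm{I}$, and the three theorems immediately preceding Theorem \ref{thmfullhom} control $\sum_{k}\mathrm{II}_{i,k}$ for $i=1,2,3$ after setting $m=n$ in the decomposition \eqref{eqromannumera}, so the proposal is correct and takes the same route. The only point worth flagging is that this assembly yields the weight $t_n^{\alpha\epsilon/2-1}$ (inherited from Theorem \ref{thmhomI}) rather than the $t_n^{\alpha\epsilon-1}$ printed in the statement; since $\epsilon>0$ is an arbitrary small parameter this discrepancy is immaterial, and the printed exponent appears to be a typo.
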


\section{Numerical experiments}
In this section, we perform three numerical experiments with unknown explicit solution to verify the effectiveness of the designed schemes. 
The spatial errors will be measured by
\begin{equation*}
\begin{aligned}
E_{h}=\|W^{n}_{h}-W^{n}_{h/2}\|_{L^2(\Omega)},
\end{aligned}
\end{equation*}
where $W^n_{h}$ means the numerical solution of $W$ at time $t_n$ with mesh size $h$; similarly, we measure the temporal errors by
\begin{equation*}
\begin{aligned}
E_{\tau}=\|W^n_{\tau}-W^n_{\tau/2}\|_{L^2(\Omega)},
\end{aligned}
\end{equation*}
where $W^n_{\tau}$ are the numerical solutions of $W$ at the fixed time $t_n$ with step size $\tau$. The  corresponding convergence rates can be calculated by
\begin{equation*}
{\rm Rate}=\frac{\ln(E_{h}/E_{h/2})}{\ln(2)} ~{\rm and }~ {\rm Rate}=\frac{\ln(E_{\tau}/E_{\tau/2})}{\ln(2)}.
\end{equation*}
For convenience, we take $\Omega=(0,1)$.

\begin{example}
Here we consider temporal convergence rates for inhomogeneous problem \eqref{eqretosol}. Let
\begin{equation*}
  \frac{1}{a^2(t)}=t^{1.01},\quad
  f(x,t)=t^{0.1}\chi_{[0,1/2]},
\end{equation*}
and $T=1$, where $\chi_{[a,b]}$ denotes the characteristic function on $[a,b]$. To investigate the convergence in time and eliminate the influence from spatial discretization, we set $h=1/128$ and Table \ref{tab:timeu00} shows the errors and convergence rates when $\alpha=0.3$ and $0.7$. The results shown in Table \ref{tab:timeu00} validate Theorem \ref{thmfullinhomo}.
\begin{table}[htbp]
	\centering
	\caption{Temporal errors and convergence rates for inhomogeneous problem}
	\begin{tabular}{c|ccccc}
		\hline
		$\alpha\backslash \tau$& 1/50  & 1/100 & 1/200 & 1/400 & 1/800 \\
		\hline
		0.3 & 7.038E-04 & 3.269E-04 & 1.506E-04 & 6.899E-05 & 3.150E-05 \\
		& Rate     & 1.1063  & 1.1186  & 1.1259  & 1.1310  \\
		\hline
		0.7& 2.661E-04 & 1.225E-04 & 5.646E-05 & 2.601E-05 & 1.197E-05 \\
		& Rate    & 1.1186  & 1.1180  & 1.1183  & 1.1192  \\
		\hline
	\end{tabular}%
	\label{tab:timeu00}%
\end{table}%
\end{example}

\begin{example}
Here we validate temporal convergence rates for homogeneous problem \eqref{eqretosol}. To satisfy the condition provided in Theorem \ref{thmfullhom}, we take
\begin{equation*}
  \frac{1}{a^2(t)}=t^{2.01}.
\end{equation*}
Set $T=1$ and
\begin{equation*}
  W_0(x)=\chi_{(1/2,1]}.
\end{equation*}
We take small spatial mesh
size $h=1/128$ so that the spatial discretization error is relatively negligible. The corresponding results are shown in Table \ref{tab:timef00}, which agree with the predictions of Theorem \ref{thmfullhom}.
\begin{table}[htbp]
	\centering
	\caption{Temporal errors and convergence rates for homogeneous problem}
	\begin{tabular}{c|ccccc}
		\hline
		$\alpha\backslash \tau$& 1/50  & 1/100 & 1/200 & 1/400 & 1/800 \\
		\hline
		0.4   & 8.319E-03 & 4.193E-03 & 1.997E-03 & 9.421E-04 & 4.534E-04 \\
		& Rate  & 0.9885  & 1.0705  & 1.0835  & 1.0552  \\
		\hline
		0.6   & 3.802E-03 & 1.873E-03 & 9.194E-04 & 4.542E-04 & 2.256E-04 \\
		& Rate  & 1.0217  & 1.0262  & 1.0172  & 1.0095  \\
		\hline
	\end{tabular}%
	\label{tab:timef00}%
\end{table}%
\end{example}

\begin{example}
Finally, we take
\begin{equation*}
  \frac{1}{a^2(t)}=10t^{1.01},\quad  W_0(x)=\chi_{(1/2,1]},\quad f(x,t)=t^{0.1}\chi_{[0,1/2]}
\end{equation*}
to verify the spatial convergence rates.
Here we choose $T=2$ and $\tau=1/1000$. Table \ref{tab:spac} shows the errors and convergence rates,  which agree with the predictions of Theorem \ref{thmsemier}.
\begin{table}[htbp]
	\centering
	\caption{Spatial errors and convergence rates}
	\begin{tabular}{c|ccccc}
		\hline
		$\alpha\backslash h$	& 1/32  & 1/64  & 1/128 & 1/256 & 1/512 \\
		\hline
		0.2   & 9.828E-04 & 2.483E-04 & 6.224E-05 & 1.557E-05 & 3.893E-06 \\
		& Rate  & 1.9848  & 1.9962  & 1.9990  & 1.9998  \\
		\hline
		0.7   & 1.196E-04 & 3.341E-05 & 8.675E-06 & 2.192E-06 & 5.494E-07 \\
		& Rate  & 1.8395  & 1.9453  & 1.9849  & 1.9961  \\
		\hline
	\end{tabular}%
	\label{tab:spac}%
\end{table}%
\end{example}
\section{Conclusion}

The model describing anomalous diffusion in expanding media is with variable coefficient. The finite element method and backward Euler convolution quadrature are respectively used to approximate the Laplace operator and Riemann-Liouville fractional derivative. We first derive the priori estimate of the solution, and then present the error estimates of the space semi-discrete and the fully discrete schemes. The extensive numerical experiments validate the effectiveness of the numerical schemes.

\section*{Acknowledgements}
This work was supported by the National Natural Science Foundation of China under grant no. 11671182, and the Fundamental Research Funds for the Central Universities under grant no. lzujbky-2018-ot03.


\begin{thebibliography}{}
	
\bibitem{Adams2003} A. Adams and J.F. Fournier, Sobolev spaces, Academic Press (2003).
	
\bibitem{Alikhanov2015}A.A. Alikhanov, A new difference scheme for the time fractional diffusion equation. J. Comput. Phys. {\bf280} (2015) 424--438.

\bibitem{Barkai2000} E. Barkai, R. Metzler, and J. Klafter, From continuous time random walks to the fractional Fokker-Planck equation. Phys. Rev. E {\bf61} (2000) 132--138.


\bibitem{Barkai2001}E. Barkai, Fractional Fokker-Planck equation, solution, and application.  Phys. Rev. E {\bf63} (2001)  046118.

	
\bibitem{Bazhlekova2015} E. Bazhlekova, B.T. Jin, R. Lazarov and Z. Zhou, An analysis of the Rayleigh-Stokes problem for a generalized second-grade fluid. Numer. Math. {\bf131} (2015) 1--31.

	
\bibitem{Chen2009}S. Chen, F. Liu, P. Zhuang and V. Anh, Finite difference approximations for the fractional Fokker-Planck equation. Appl. Math. Model. {\bf33} (2009) 256--273.

\bibitem{Chen20191} Y. Chen, X.D. Wang, and W.H. Deng, Langevin dynamics for L\'evy walk with memory. Phys. Rev. E {\bf99} (2019) 012135.

\bibitem{Chen20192} Y. Chen, X.D. Wang, and W.H. Deng, Subdiffusion in an external force field. Phys. Rev. E {\bf99} (2019) 042125.


\bibitem{Deng2009}W.H. Deng, Finite element method for the space and time fractional Fokker-Planck equation. SIAM J. Numer. Anal. {\bf47} (2009) 204--226.
	
\bibitem{Deng2013}W.H. Deng and J.S. Hesthaven, Local discontinuous galerkin methods for fractional diffusion equations.  ESAIM Math. Model. Numer. Anal. {\bf47} (2013) 1845--1864.



\bibitem{Gunzburger2018}M. Gunzburger, B.Y. Li, and J.L. Wang, Sharp convergence rates of time discretization for stochastic time-fractional PDEs subject to additive space-time white noise. Math. Comp. {\bf88} (2018) 1715--1741.


\bibitem{Jin2003}B.T. Jin, R. Lazarov and Z. Zhou, Error estimates for a semidiscrete finite element method for fractional order parabolic equations. SIAM J. Numer. Anal. {\bf51} (2013) 445--466.

\bibitem{Jin2014}B.T. Jin, R. Lazarov, J. Pasciak and Z. Zhou, Error analysis of a finite element method for the space-fractional parabolic equation. SIAM J. Numer. Anal. {\bf52} (2014) 2272--2294.

\bibitem{Jin2015}B.T. Jin, R. Lazarov, J. Pasciak and Z. Zhou, Error analysis of semidiscrete finite element methods for inhomogeneous time-fractional diffusion. IMA J. Numer. Anal. {\bf35} (2015) 561--582.
	
\bibitem{Jin2016}B.T. Jin, R. Lazarov and Z. Zhou, Two fully discrete schemes for fractional diffusion and diffusion-wave equations with nonsmooth data. SIAM J. Sci. Comput. {\bf38} (2016) A146--A170.

\bibitem{Jin2017}B.T. Jin, B.Y. Li and Z. Zhou, Correction of high-order BDF convolution quadrature for fractional evolution equations. SIAM J. Sci. Comput. {\bf39} (2017) A3129--A3152.

\bibitem{Jin2019}B.T. Jin, B.Y. Li, and Z. Zhou, Subdiffusion with a time-dependent coefficient: Analysis and numerical solution. Math. Comp. {\bf88} (2019) 2157--2186.

\bibitem{Larsson1992}C.E.S. Larsson, Error estimates with smooth and nonsmooth data for a finite element method for the cahn-hilliard equation. Math. Comp. {\bf58} (1992) 603--630.


\bibitem{LeVot2017}F. Le Vot, E. Abad, and S.B. Yuste, Continuous-time random-walk model for anomalous diffusion in expanding media. Phys. Rev. E {\bf96} (2017) 032117.



\bibitem{Li2009}X.J. Li and C.J. Xu, A space-time spectral method for the time fractional diffusion equation. SIAM J. Numer. Anal. {\bf47} (2009) 2108--2131.

\bibitem{Lin2007}Y.M. Lin and C.J. Xu, Finite difference/spectral approximations for the time-fractional diffusion equation. J. Comput. Phys. {\bf225} (2007) 1533--1552.


\bibitem{Lubich1988}C. Lubich, Convolution quadrature and discretized operational calculus I. Numer. Math. {\bf52} (1988) 129--145.

\bibitem{Lubich19882}C. Lubich, Convolution quadrature and discretized operational calculus II. Numer. Math. {\bf52} (1988) 413--425.

\bibitem{Lubich1996}C. Lubich, I.H. Sloan and V. Thomée, Nonsmooth data error estimates for approximations of an evolution equation with a positive-type memory term. Math. Comp. {\bf65} (1996) 1--17.

\bibitem{Mustapha2018}K. Mustapha, FEM for time-fractional diffusion equations, novel optimal error analyses. Math. Comp. {\bf87} (2018) 2259--2272.


\bibitem{Podlubny1999} I. Podlubny, Fractional Differential Equations. Academic Press (1999).

\bibitem{Thomee2006} V. Thom\'{e}e, Galerkin Finite Element Methods for Parabolic Problems. Springer-Verlag 2006.


\bibitem{Xu2018}P.B. Xu and W.H. Deng, Fractional compound Poisson processes with multiple internal states.  Math. Model. Nat. Phenom. {\bf13} (2018) 10.

\bibitem{Zeng2018}F.H. Zeng, I. Turner and K. Burrage, A stable fast time-stepping method for fractional integral and derivative operators. J. Sci. Comput. {\bf77} (2018) 283--307.
	
\end{thebibliography}
\end{document}